\documentclass[11pt,reqno]{amsart}
\usepackage{graphicx,xcolor,mathtools,pifont}
\usepackage{amssymb,amsmath,amsthm,mathrsfs,paralist,bm,esint,setspace,accents}
\RequirePackage[colorlinks,%
	linkcolor=orange!80!black,%
	citecolor=cyan!80!black,%
	urlcolor=cyan!80!black]{hyperref}
\usepackage{cite}
\newcommand{\cev}[1]{\reflectbox{\ensuremath{\vec{\reflectbox{\ensuremath{#1}}}}}}
\usepackage[cal=dutchcal,
	scr=boondoxo,scrscaled=1.05,
	bb=boondox,bbscaled=1.05]{mathalfa}
\usepackage{cases}

\usepackage{color}
\usepackage{setspace}
                                   
\let\OLDthebibliography\thebibliography
\renewcommand\thebibliography[1]{
  \OLDthebibliography{#1}
  \setlength{\parskip}{3pt}
  \setlength{\itemsep}{0pt plus 0.3ex}
}

\DeclareMathOperator{\Var}{Var}
\DeclareMathOperator{\Cov}{Cov}

\DeclareMathOperator{\U0}{\mathbb{U}_0}
\DeclareMathOperator{\bS}{\mathbb{S}}

\newcommand{\<}{\langle}
\renewcommand{\>}{\rangle}

\newcommand{\N}{\mathbb{N}}
\newcommand{\Z}{\mathbb{Z}}
\newcommand{\Q}{\mathbb{Q}}
\newcommand{\R}{\mathbb{R}}

\newcommand{\lip}{\text{\rm Lip}}

\renewcommand{\P}{\mathrm{P}}
\newcommand{\E}{\mathrm{E}}

\newcommand{\cA}{\mathcal{A}}

\newcommand{\sI}{\mathscr{I}}

\newcommand{\sD}{\mathscr{D}}

\newcommand{\sC}{\mathscr{C}}

\newcommand{\sR}{\mathscr{R}}

\newcommand{\sF}{\mathscr{F}}

\newcommand{\sS}{\mathscr{S}}

\newcommand{\1}{\mathbb{1}}
\renewcommand{\d}{{\rm d}}

\newcommand{\e}{{\rm e}}
\renewcommand{\geq}{\geqslant}
\renewcommand{\leq}{\leqslant}
\renewcommand{\ge}{\geqslant}
\renewcommand{\le}{\leqslant}
\title[Ergodic theory of SPDEs]{The ergodic theory of SPDEs\\in a weak-noise regime} 
\thanks{Research supported in part by the United States National Science Foundation (DMS-2245242) and the Indian ANRF grant CRG/2023/002667}
\author[M. Joseph]{Mathew Joseph}
\address{Statmath Unit, Indian Statistical Institute,
	8th Mile Mysore Road, RVCE Post, Bengaluru 560059, India}
\email{m.joseph@isibang.ac.in}
\author[D. Khoshnevisan]{Davar Khoshnevisan}
\address{Department of Mathematics, University of Utah,
	Salt Lake City, UT 84112-0090, USA}
\email{davar@math.utah.edu}

\author[K. Kim]{Kunwoo Kim}
\address{Department of  Mathematics, Pohang University of Science and Technology (POSTECH), 
	Pohang, Gyeongbuk, 37673, Korea}
\email{kunwoo@postech.ac.kr}
\author[C. Mueller]{Carl Mueller}
\address{Department of Mathematics, University of Rochester,
	Rochester, NY 14627, USA}
\email{carl.e.mueller@rochester.edu}
\date{March 18, 2026}
\keywords{Stochastic partial differential equations, 
    ergodicity, invariant measures, weak noise}
\subjclass[1991]{ Primary, 60H15; Secondary, 37C40, 37L40}
\let\oldtocsection=\tocsection
\let\oldtocsubsection=\tocsubsection

\renewcommand{\tocsection}[2]{\hspace{0em}\oldtocsection{#1}{#2}}
\renewcommand{\tocsubsection}[2]{\hspace{2.5em}\oldtocsubsection{#1}{#2}}

\newtheorem{stat}{Statement}[section]
\newtheorem{proposition}[stat]{Proposition}

\newtheorem*{ithm}{Informal Theorem}
\newtheorem{corollary}[stat]{Corollary}
\newtheorem{theorem}[stat]{Theorem}
\newtheorem{lemma}[stat]{Lemma}

\theoremstyle{definition} 
\newtheorem{definition}[stat]{Definition}
\newtheorem{assumption}[stat]{Assumption}

\newtheorem{remark}[stat]{Remark}

\newtheorem{conjecture}{Conjecture}

\numberwithin{equation}{section}

\begin{document}
\begin{abstract}
	Consider a parabolic SPDE 
	\[
		\partial_t u = \Delta u + \sigma(u)\eta,
	\]
	on
	$(0\,,\infty)\times\R^d$, where
	$\eta$ is a centered, generalized Gaussian noise with 
	$\Cov[\eta(t\,,x)\,,\eta(s\,,y)]=\delta_0(t-s)\Lambda(x-y)$ for a tempered Borel
	measure $\Lambda$ that is positive definite and satisfies a mild weak-noise.
	The existence of invariant measures of versions of these types of SPDEs
	has been studied at great length, particularly in the ``weak-noise regime''; see for example
	Assing and Manthey \cite{AssingManthey2003},
	Chen and Eisenberg \cite{ChenEisenberg2024},
	Chen, Ouyang, Tindel, and Xia \cite{ChenOuyangTindelXia2024},
	Eckmann and Hairer \cite{EckmannHairer2001},
	Misiats and Stanzhytskyi \cite{MSY2020},
	Yu Gu and Jiawei Li \cite{GuLi2020}, and
	Tessitore and Zabczyk \cite{TessitoreZabczyk1998}.
	Here, we characterize all annealed, ergodic, invariant measures for the above
	SPDE in the weak-noise regime.
\end{abstract}
\maketitle
\tableofcontents

\section{Introduction}
Consider the initial-value problem,
\begin{equation}\label{SPDE}
	\partial_t u = \Delta u + \sigma(u)\eta\qquad\text{on $(0\,,\infty)\times\R^d$},
\end{equation}
subject to  $u(0)=u_0$ for a suitable initial profile $u_0$.
We adhere to the general theory of Dalang \cite{Dalang1999} and assume that the forcing term
$\eta$ is a centered, generalized Gaussian random field whose covariance measure is
given by
\[
	\Cov[\eta(s\,,y)\,,\eta(t\,,x)] = \delta_0(t-s) \Lambda(x-y)
	\qquad\forall s,t\ge0,\
	x,y\in\R^d.
\]
Moreover, the spatial covariance of the noise
$\Lambda=\sF\mu$ denotes the Fourier transform of
a symmetric, tempered, positive-definite Borel measure $\mu$ 
on $\R^d$, where $\sF$ denotes the Fourier transform on $\R^d$, normalized so that 
\[
	(\sF f)(\xi) =\int_{\R^d} \e^{-ix\cdot \xi} f(x) \, \d x 
	\qquad\forall  f\in L^1(\R^d),\ \xi\in\R^d.
\]
We assume additionally that the nonlinearity $\sigma: \R \to \R$ in \eqref{SPDE}
is a Lipschitz continuous function,  that $u_0:\R^d\to\R$ is a nonrandom,
bounded, and measurable function, and  that
\begin{equation}\label{cond:Dalang}
	\int_{\R^d}\frac{\mu(\d\xi)}{1+\|\xi\|^2}<\infty.
\end{equation}
In this way, we may deduce from the theory of Dalang \cite{Dalang1999}  that \eqref{SPDE}
is \emph{well posed}. By this we mean that, up to a modification, \eqref{SPDE} 
has exactly, and only, one random-field solution 
$u$ that is mild [see \eqref{mild} below] and satisfies
\begin{equation}\label{L2:bd}
	\adjustlimits\sup_{t\in(0,T)}\sup_{x\in\R^d}\E\left( |u(t\,,x)|^2\right)<\infty
	\quad\forall T>0.
\end{equation}

From now on, we always assume condition \eqref{cond:Dalang}.
This ensures that \eqref{SPDE} is well posed and permits us
to now study its ergodic theory.
In order to do that, we change our point of view slightly
and consider \eqref{SPDE}
as a dynamical description of a suitable infinite-dimensional Markov process.
Equivalently, \eqref{SPDE} can be viewed as a system of stochastic PDEs,
indexed by a collection of initial data that can be random, and
independent of the noise $\eta$. 

Let $(\Omega\,,\cA,\P)$ denote the underlying probability space.
In order to avoid unpleasant measure-theoretic
obstructions, we consider only measurable initial profiles 
$u_0:\Omega\times\R^d\ni (\omega\,,x)\mapsto \R$
that are independent of $\eta$. A routine extension
of the theory in Ref.~\cite{Dalang1999} then shows that \eqref{SPDE}
is well posed -- in exactly the same sense as before
-- provided that, in addition to \eqref{cond:Dalang},
\begin{equation}\label{L2:bd:u_0}
	\sup_{y\in\R^d}\E\left(|u_0(y)|^2\right)<\infty.
\end{equation}
Dalang's construction of the stochastic integral ({\it ibid.}) also implies that
$u(t):\Omega\times\R^d\ni (\omega\,,x)\mapsto \R$
is measurable for every $t>0$. Moreover, thanks to \eqref{L2:bd} and \eqref{L2:bd:u_0},
we can start the dynamics \eqref{SPDE} -- using an independent copy of the noise
$\eta$ -- started from $u(t)$. A standard computation then shows 
that the post-$t$ process $(s\,,x)\mapsto u(t+s\,,x)$
satisfies \eqref{SPDE}, started from initial data $u(t)$,
and with $\eta$ replaced by a copy of $\eta$ that is independent
of $u(t)$. In other words, our point of view of allowing jointly measurable
random initial profiles $u_0$ renders the dynamics in \eqref{SPDE}
as Markovian even though that Markov process might not
take values in a nice space, such as the space of continuous functions,
under the type of minimal hypotheses that are considered here.

Nevertheless, we are interested in understanding the structure
of the invariant measure(s) of those
Markovian dynamics under minimal weak-noise hypotheses on the noise,
equivalently the measures $\Lambda$ and/or $\mu$.
Such matters have been pursued in great length in the technically
simpler case that the role of $(\R^d\,,\Delta)$
in \eqref{SPDE} is replaced $(\Z^d\,,\Delta_{\Z^d})$ where $\Delta_{\Z^d}$
denotes the graph Laplacian on $\Z^d$;
see for example, Carmona and Molchanov \cite{CM},
Cox and Greven \cite{CoxGreven1994},
Greven and den Hollander \cite{GrevenDenHollander2007}, and 
Shiga \cite{Shiga1992}. In many of these papers, the analysis is further
restricted to the case that $\sigma$ is linear; that is 
the \emph{parabolic Anderson model} driven by $\eta$; see  \cite{CM}. 

Altogether, the mentioned ``semidiscrete setting''
includes infinitely-many interacting
diffusions, and the end result of the mentioned theory is that
one can often identify explicit minimal weak-noise conditions under which
there is a continuum of time-ergodic  invariant measures. Moreover,
as was shown in Deuschel \cite{Deuschel1994}, convergence
to stationarity is typically slow
as there often is no spectral gap in the weak-noise regime. 
In physical terms, the slow rate of convergence to stationarity then
would imply that the invariant measures are ``annealed.'' This notion
can be made  precise; 
see the discussion around Definition \ref{def:anneal} below
for more details.

Tessitore and Zabczyk \cite{TessitoreZabczyk1998} initiated an
analogous ergodic theory of \eqref{SPDE} in the present, continuum, setting 
by showing that \eqref{SPDE}
generally has invariant measures under certain technical conditions
and when the noise is sufficiently weak. This was followed by
a series of results where the existence of an invariant measure
is proved in a variety of similar settings. See, for example,
Assing and Manthey \cite{AssingManthey2003}, 
Chen and Eisenberg \cite{ChenEisenberg2024},
Chen, Ouyang, Tindel, and Xia \cite{ChenOuyangTindelXia2024},
Eckmann and Hairer \cite{EckmannHairer2001},
Yu Gu and Li \cite{GuLi2020}, and
Misiats, Stanzhytskyi, and Yip \cite{MSY2020}.

In the present work, we continue and complement
the mentioned literature by characterizing, using more precise language
than we have  used in the above, all ``annealed,'' time-ergodic
invariant measures for \eqref{SPDE} under minimal hypotheses.
The reader might notice that we write ``time ergodic'' in place of the more common
``ergodic.'' This is because the invariant measures of \eqref{SPDE}
live on the sample space $\R^{(\R^d)}$ and, as such, can be (and often are)
also laws of spatially stationary and ergodic random fields that are indexed
by $\R^d$. Because the implied connections between time ergodicity and
spatial ergodicity turn out to be central to parts of our work,
we refer to the ergodicity -- for example of the Markov process $\{u(t)\}_{t\ge0}$ --
more precisely as time ergodicity in order
to distinguish it from the notion of spatial ergodicity.

As was mentioned earlier, we wish to work in the ``weak-noise regime.''
By this we mean precisely  the following:
\begin{equation}\label{COND:WN}
	\int_{\R^d}\frac{\mu(\d z)}{\|z\|^2}<\infty
	\quad\text{and}\quad
	\frac{\lip_\sigma^2}{2}\int_{\R^d}\frac{\mu(\d z)}{\|z\|^2}< 1,
\end{equation}
where $\lip_\sigma=\sup_{x\neq y}|\sigma(x)-\sigma(y)|/|x-y|$ denotes
the optimal Lipschitz constant of $\sigma$. Of course when $\lip_\sigma>0$, that is
when $\sigma$ is not a constant, \eqref{COND:WN} can be written more succinctly
as $\int_{\R^d}\|z\|^{-2}\,\mu(\d z)< 2/\lip_\sigma^2$.\footnote{
	One can check directly that \eqref{COND:WN} can only be valid
	in transient dimensions $d\ge3$. Indeed, \eqref{mathscr(E)} below and
	the Tonelli theorem together imply that
	$\int_{\R^d}\|z\|^{-2}\,\mu(\d z) = 4(\mathscr{v}*\Lambda)(0),$
	for the zero-potential density $\mathscr{v}(x)=\int_0^\infty p_s(x)\,\d s$,
	which is infinite everywhere when $d=1$ or $d=2$.
}

Condition \eqref{COND:WN} has been identified already
in Chen and Eisenberg \cite[eq.s~(1.10a), (1.10b)]{ChenEisenberg2024},
and it is exact for the validity of $L^2$ methods, as will be pointed out also in 
Lemma \ref{lem:bdd:L2} below.

It is possible to apply the identities in
\eqref{mathscr(E)} below in order to see that
Condition \eqref{COND:WN} is precisely
the continuous counterpart of the
weak noise condition of the literature on particle systems, as can be found 
for example in Cox and Greven \cite[Theorem 1]{CoxGreven1994},
Greven and den Hollander
\cite[eq.~(1.21)]{GrevenDenHollander2007}, and
Shiga \cite[eq.~(1.10)]{Shiga1992}.

\begin{assumption}\label{ass:WN}
	Unless it is stated explicitly to the contrary, 
	from now on we assume that
	\eqref{COND:WN} holds. It should be clear that
	\eqref{COND:WN} subsumes \eqref{cond:Dalang}.
\end{assumption}

Recall that a spatial random field $u_0=\{u_0(x)\}_{x\in\R^d}$ is
called:
\begin{itemize}[\noindent$\bullet$]
	\item Stationary, or \emph{spatially stationary}, if $\{u_0(x)\}_{x\in\R^d}$
   		and $\{u_0(x+y)\}_{x\in\R^d}$ have the same finite-dimensional
		distributions for every $y\in\R^d$.
	\item Weak stationary, or \emph{weakly spatially stationary}, if $u_0(x)\in L^2(\Omega)$
      	for every $x\in\R^d$, $\E[u_0(x)]$
		does not depend on $x\in\R^d$, and $\Cov[u_0(x)\,,u_0(y)]
		=\Cov[u_0(0)\,,u_0(y-x)]$ for every $x,y\in\R^d$.
\end{itemize}

The principal aim of this paper is to state and prove a 
rigorous version of the following
whose notation and content is motivated by the work of Shiga
\cite{Shiga1992} on discrete systems.

\begin{ithm}
	If the weak-noise condition \eqref{COND:WN} holds, then:
	\begin{enumerate}[\rm(a)]
		\item For every $\theta\in\R$, the law of the solution $u(t)$ of \eqref{SPDE}
			at time $t>0$, started at 
			$u_0\equiv\theta$, converges weakly to a probability measure $\nu_\theta$
			as $t\to\infty$.
		\item $\{\nu_\theta\}_{\theta\in\R}$ are mutually singular.
		\item For every $\theta\in\R$, $\nu_\theta$ is a time-ergodic, annealed
			invariant measure for \eqref{SPDE}, and is the law of a spatially
			stationary random field.
		\item If $\nu$ is an annealed
			invariant measure for \eqref{SPDE}, then 
			$\nu=\nu_\theta$
			for $\theta=\int h(0)\,\nu(\d h)$.
	\end{enumerate}
\end{ithm}

The preceding is labeled as an informal theorem since, among other things, its precise statement
requires the introduction of a suitable topology for a proper description of 
terms such as ``law,''  ``weak convergence,'' and so on.
It also requires a rigorous definition of ``annealed'' random fields. All of this will 
be done in a series of steps that begin in the next section and culminate in Theorem \ref{th:main},
which is the precise form of the above Informal Theorem. 

We will see in Proposition \ref{pr:anneal:LLN} below that if an invariant measure
$\nu$ is the law of a  weakly spatially stationary random 
field $\{u_0(x)\}_{x\in\R^d}$, then $\nu$ is annealed if and only if
the following law of large numbers holds:
\begin{equation}\label{LLN}
	R^{-d}\int_{[-R,R]^d} u_0(x)\,\d x \xrightarrow{R\to\infty}
	\E[ u_0(0)]\qquad\text{in $L^2(\Omega)$}.
\end{equation}
In particular, part (d) of the Informal Theorem immediately yields the following.

\begin{corollary}\label{cor:unique}
	Choose and fix an arbitrary $\theta\in\R$. Then, $\nu_\theta$
	is the only mean-$\theta$ time-ergodic invariant measure among all
	laws of spatially stationary, spatially ergodic random fields should there
	be any. 
\end{corollary}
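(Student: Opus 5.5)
The plan is to derive Corollary \ref{cor:unique} from part (d) of the Informal Theorem (in its rigorous form, Theorem \ref{th:main}) together with Proposition \ref{pr:anneal:LLN}. The idea is to show that every time-ergodic invariant measure that is the law of a spatially stationary, spatially ergodic random field is automatically annealed. Part (d) then identifies it with $\nu_\theta$, where $\theta$ is its mean.

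Fix $\theta\in\R$, and let $\nu$ be a time-ergodic invariant measure with mean $\int h(0)\,\nu(\d h)=\theta$. Assume $\nu$ is the law of a spatially stationary, spatially ergodic random field $u_0=\{u_0(x)\}_{x\in\R^d}$. Within the framework of the paper the initial data must satisfy \eqref{L2:bd:u_0}, so $u_0(0)\in L^2(\Omega)$. Strict stationarity together with square integrability implies weak spatial stationarity, so Proposition \ref{pr:anneal:LLN} applies to $\nu$. It therefore suffices to verify the law of large numbers \eqref{LLN} in $L^2(\Omega)$.

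For this I would invoke the multiparameter (Wiener) mean ergodic theorem for the measure-preserving $\R^d$-shift action $\tau_y h=h(\cdot+y)$ on $(\R^{(\R^d)},\nu)$. Because $u_0(0)\in L^2$, the averages $(2R)^{-d}\int_{[-R,R]^d}u_0(x)\,\d x$ converge in $L^2(\Omega)$ to the conditional expectation of $u_0(0)$ given the shift-invariant $\sigma$-field. Spatial ergodicity makes that $\sigma$-field trivial, so the limit is the constant $\E[u_0(0)]=\theta$. This is exactly \eqref{LLN}, up to the harmless normalization $(2R)^{-d}$ versus $R^{-d}$ that is implicit in the paper's statement.

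Two technical points require care, and I expect them to be the main obstacle. First, the spatial integral must be well defined, which needs joint measurability of $(\omega,x)\mapsto u_0(x)$. Second, the mean ergodic theorem must be applied to a continuous-parameter action on a non-Polish product space. I would handle both by the standard device of checking the $L^2$ convergence through the covariance function. Write $C(z)=\Cov[u_0(0),u_0(z)]$. Then
\[
\Var\Big((2R)^{-d}\int_{[-R,R]^d}u_0\Big)=(2R)^{-2d}\iint_{([-R,R]^d)^2} C(x-y)\,\d x\,\d y ,
\]
and, by the spectral (von Neumann) form of the ergodic theorem, this tends to $0$ as $R\to\infty$ whenever the invariant $\sigma$-field is trivial. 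Measurability of $C$ would follow from the joint measurability assumed throughout the paper.

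Once \eqref{LLN} holds, Proposition \ref{pr:anneal:LLN} shows that $\nu$ is annealed, and part (d) gives $\nu=\nu_\theta$. This proves uniqueness. The phrase ``should there be any'' in the statement reflects that the corollary does not assert that $\nu_\theta$ itself is spatially ergodic, so no existence claim needs to be proved here.
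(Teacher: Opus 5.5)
Your proposal is correct and follows the paper's route: you use strict stationarity with $u_0\in\U0$ to get weak stationarity, the mean ergodic theorem to get the law of large numbers \eqref{LLN}, Proposition \ref{pr:anneal:LLN} to conclude that the measure is annealed, and Theorem \ref{th:main}(c) to identify it with $\nu_\theta$. Your remark that \eqref{LLN} should be normalized by $(2R)^{-d}$ rather than $R^{-d}$ is also correct.
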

A semi-discrete version of Corollary \ref{cor:unique}, valid for space-time white noise
on $\R_+\times\Z^d$, 
appears earlier in Shiga \cite[Theorem 1.1]{Shiga1992}. 
Unfortunately, such statements are conditional assertions only
since we do not know \emph{a priori}
that $\nu_\theta$ is spatially ergodic. 
Theorem \ref{th:ergodic} below states that a stronger weak-noise
condition than \eqref{COND:WN} indeed --
see \eqref{cond:ergodic} -- ensures that $\nu_\theta$ is spatial ergodicity.
In this connection, let us state two conjectures.

\begin{conjecture}\label{conj1}
	We believe that \eqref{cond:ergodic} is suboptimal for
	the spatial ergodicity of the $\nu_\theta$s, and yet 
	\eqref{COND:WN} is not sufficient
	for the same ergodic property to hold.
\end{conjecture}

We have no rigorous arguments toward establishing Conjecture \ref{conj1}. 

\begin{conjecture}\label{conj2}
	We believe that every annealed time-ergodic invariant measure is the law of
	a  weakly spatially stationary random field.
\end{conjecture} 

In Lemmas \ref{lem:WS:const} and \ref{lem:WS:PAM} 
below, we verify Conjecture \ref{conj2} 
respectively when $\sigma$ is a constant (the Edwards-Wilkinson model) and when
$\sigma$ is linear (the parabolic Anderson model). 
The general case eludes us. 

Choose and fix some $\theta>0$. Let us reiterate that the Informal Theorem and its rigorous version
(Theorem \ref{th:main}) say that $\nu_\theta$ is
the only mean-$\theta$, time-ergodic,
annealed invariant measure. Moreover, we have a simple algorithm for approximate sampling
from the invariant measure $\nu_\theta$: Simply start \eqref{SPDE} from constant initial
data $\theta$, and run the SPDE up to a long time $t\gg1$. The random field $u(t)$
behaves as an approximate sample from $\nu_\theta$.

As we shall see, this algorithm is exact when $\sigma$ is a constant
(see Section \ref{sec:Gaussian}),
and can be greatly simplified in the case that
$\sigma$ is linear (see Section \ref{sec:PAM}). There are a relatively small
number of examples of infinite-dimensional Markov 
processes with easy-to-access invariant measures,
mostly in a discrete setting, cast for interacting 
particle systems. The present paper
adds to the literature many new example of such Markov processes,
one for every SPDE \eqref{SPDE} 
that satisfies \eqref{COND:WN}. 
Furthermore, our results will show that
samples from these invariant measures are (pointwise-defined)
random fields.
This identification with random fields 
somewhat strengthens the bulk of the mentioned literature 
which is concerned with
producing invariant measures on Hilbert space, 
specifically, (possibly weighted) $L^p$-spaces
[$p\in\{2\,,\infty\}$].

Finally, we mention that our method
includes the introduction of a new,  robust duality argument which is
easy to understand. In fact, our duality closely resembles
finite-dimensional duality results such as those in Nagasawa
\cite{Nagasawa1964}. We believe this duality method might have other uses as well.

Let us conclude the introduction with a brief outline of the paper.
In Section \ref{sec:info} we discuss some of the SPDE background necessary to tackle the problem at hand. Among other things, we introduce measure-theoretic descriptions of a good state space $\bS_d$ of the infinite-dimensional Markov process $\{u(t)\}_{t\ge0}$, together with
a reasonably nice sigma-algebra $\sS_d$. This construction turns out to be non-trivial since the solution to \eqref{SPDE} does not have much regularity solely under condition \eqref{COND:WN}. 
In Section \ref{sec:duality} we introduce a duality argument 
that is critical to our forthcoming analysis. We also describe some 
ergodic-theoretic facts that are valid for the minimally regular 
Markov processes of this paper. 

Subsequently, we introduce annealed invariant measures in
Section \ref{sec:main}. The exact, slighly improved,
version of the previous Informal Theorem (Theorem \ref{th:main})
can be found in Section \ref{sec:main} as well.
Section \ref{sec:Linear} is dedicated to 
the proof of Conjecture \ref{conj2} in the special case that $\sigma$ is either a constant
or is linear. 
Theorem \ref{th:main} is proved later in Section \ref{sec:Proof}.

Section \ref{sec:Holder} answers a question that was posed to us by 
Samy Tindel. Namely, Section \ref{sec:Holder} presents a weak-noise condition 
-- more restrictive than \eqref{COND:WN} --
that ensures that the invariant measures of \eqref{SPDE} are supported in the space of continuous functions. 
In this type of setting, the technical work of the first few sections, often involving delicate
coupling arguments, can be simply replaced
by known ergodic-theoretic methods \cite{DZ}, and our work can be simplified greatly though
we leave those simplifications to the interested reader.
Afterward, we present a more succint description of $\nu_\theta$ in the special cases that $\sigma
\equiv\text{const}$ (Section \ref{sec:Gaussian}) and $\sigma(z)\propto z$ 
(Section \ref{sec:PAM}). We conclude this paper in
Section \ref{sec:ergodicity} with a discussion of the previously mentioned
spatial ergodicity of the invariant measures of \eqref{SPDE}.

\section{Background information}\label{sec:info}

Our characterization of the annealed invariant measures for
\eqref{SPDE} has a number of technical
prerequisites that we include as part of the discussion of this section.
Let us begin with the basic SPDE \eqref{SPDE}. 

As was mentioned in the Introduction, Dalang \cite{Dalang1999} has shown that,
under condition \eqref{cond:Dalang}, there exists a predictable space-time
random field $u=\{u(t\,,x)\}_{t>0,x\in\R^d}$ -- see \cite{Dalang1999,Walsh} --
that satisfies \eqref{SPDE}, started from a non-random $u_0\in L^\infty(\R^d)$, in 
mild (or integral, or ``Duhamel'') form. This means that $u$ solves
the random integral equation,
\begin{equation}\label{mild}
	u(t\,,x) = (p_t*u_0)(x) + \int_{(0,t)\times\R^d} p_{t-s}(y-x)\sigma(u(s\,,y))\,
	\eta(\d s\,\d y),
\end{equation}
where the stochastic integral with respect to $\eta$ is understood as an
It\^o-Walsh type stochastic integral, and
\begin{equation}\label{p}
	p_t(x) = (4\pi t)^{-d/2}\exp\left( -\frac{\|x\|^2}{4t}\right)\qquad\forall
	t>0,x\in\R^d.
\end{equation}
Dalang's theory ({\it ibid.})
also yields the \emph{a priori} bound \eqref{L2:bd} for $u$,
and assures of the existence of a unique (up to a modification) such solution
subject to the integrability condition
\eqref{L2:bd}. Moreover, Dalang has proved that \eqref{cond:Dalang}
is necessary as well as sufficient for well posedness when $\sigma$ is a constant.

As was alluded to in the Introduction, in order to study the
ergodic theoretic properties of the dynamics in
\eqref{SPDE}, we will need to be able to start \eqref{SPDE}
according to a random initial data $u_0$ which, for us, will always
be a pointwise-defined random field. The extension to random $u_0$
is usually done without explicit mention since $u_0$ will necessarily 
always be independent of the noise $\eta$ and so one can simply condition
on $u_0$. However, there is a delicate measure-theoretic matter that
we need to pinpoint in this particular setting. Thus, let us start with
an explicit definition of the type of random initial profiles that we plan to study.
Recall that the underlying probability space is denoted
throughout by $(\Omega\,,\cA,\P)$.

\begin{definition}\label{def:U0}
	Throughout, let $\U0$ denote the collection of all real-valued, spatial 
	random fields $u_0=\{u_0(x)\}_{x\in\R^d}$ such that
	$u_0:\Omega\times\R^d\to\R$ is (jointly) measurable,
	and satisfies $\|u_0\|_{\U0}<\infty$, where
	\[
		\|u_0\|_{\U0}^2 = \sup_{x\in\R^d}\E\left( |u_0(x)|^2\right).
	\]
\end{definition}

In the following, we will tacitly use the readily checkable facts that:
(a) $\U0$ is a linear space;  (b) $\|\,\cdots\|_{\U0}$ defines a norm
on $\U0$; and (c) $(\U0\,,\|\,\cdots\|_{\U0})$ is
a Banach space.
The following is an immediate extension of Dalang's theory \cite{Dalang1999}.

\begin{proposition}\label{pr:Dalang}
	Consider the SPDE \eqref{SPDE}, started at some $u_0\in\U0$,
	and with an independent noise $\eta$. Then:
	\begin{enumerate}[\noindent\rm (a)]
      	\item \eqref{SPDE} has a mild solution
            	$u$ such that $u(t)\in\U0$ for all $t>0$. 
            	In fact, $u$ satisfies \eqref{L2:bd}; equivalently put,
            	$\sup_{t\in(0,T)}\|u(t)\|_{\U0}<\infty$ for all $T>0$.
            \item The random field $u$ is, up to modification, the only mild solution
            	that satisfies \eqref{L2:bd}. 
      	\item For every fixed, deterministic $t>0$,
		the post-$t$ process $s\mapsto u(t+s)$
            	satisfies \eqref{SPDE}, started from initial profile $u(t)\in\U0$, and 
			with $\eta$ replaced by a copy $\eta_t$
            	of $\eta$ that is independent
            	of $u(t)$.
	\end{enumerate}
\end{proposition}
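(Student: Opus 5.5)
The plan is to rerun Dalang's Picard-iteration argument in the Banach space of predictable random fields normed by $\sup_{t\le T}\|u(t)\|_{\U0}$. Throughout, the randomness of $u_0$ is handled by enlarging the filtration. Let $\mathcal{F}_t$ be the filtration generated by $u_0$ together with the noise $\eta$ restricted to $(0,t]\times\R^d$, suitably completed. Because $u_0$ is independent of $\eta$, the process $\eta$ is still a martingale measure with the same covariance with respect to $\{\mathcal{F}_t\}$. Hence Walsh--Dalang integrals of $\mathcal{F}$-predictable integrands are defined, and they satisfy the usual It\^o isometry and Burkholder--Davis--Gundy bounds.

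For part (a), the first step is the deterministic term. Joint measurability of $u_0$ makes $(p_t*u_0)(x)$ a measurable, $\mathcal{F}_0$-measurable random field. By Jensen's inequality, $\E|(p_t*u_0)(x)|^2\le \|u_0\|_{\U0}^2$. Next define the Picard iterates
\[
	u_{n+1}(t\,,x)=(p_t*u_0)(x)+\int_{(0,t)\times\R^d}p_{t-s}(y-x)\sigma(u_n(s\,,y))\,\eta(\d s\,\d y),
\]
each chosen as a jointly measurable, predictable modification, using the standard construction in Dalang \cite{Dalang1999}. The It\^o isometry and the Lipschitz property give
\[
	\sup_x\E|u_{n+1}(t\,,x)-u_n(t\,,x)|^2\le \lip_\sigma^2\int_0^t \sup_y\E|u_n(s\,,y)-u_{n-1}(s\,,y)|^2\, \mathcal{J}(t-s)\,\d s,
\]
where $\mathcal{J}(r)=\int_{\R^d}\e^{-2r\|\xi\|^2}\mu(\d\xi)$. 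The function $\mathcal{J}$ is integrable on $[0,T]$ exactly under \eqref{cond:Dalang}. The key point in this bound is the nonnegativity of the covariance weights, which comes from $\Lambda=\sF\mu$ with $\mu\ge0$: it lets us bound $\E[\cdots]$ by the supremum over $y$ times $\int \sF(p_{t-s}\ldots)$. A Gronwall-type lemma for convolution inequalities with an integrable kernel, as in Dalang \cite{Dalang1999}, then shows that $\{u_n\}$ is Cauchy in $\sup_{t\le T}\|\cdot\|_{\U0}$. The limit $u$ is a mild solution, and it satisfies \eqref{L2:bd}. Membership $u(t)\in\U0$ additionally needs joint measurability in $(\omega,x)$. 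This follows by taking an $L^2$-limit of jointly measurable iterates along a subsequence that converges a.s. for each $(t,x)$, or via the measurable-modification theorem.

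For part (b), take two mild solutions satisfying \eqref{L2:bd}. Their difference satisfies the same convolution inequality with zero forcing. Because both solutions are bounded in $L^2$, the same Gronwall lemma forces $\sup_x\E|u-v|^2=0$.

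For part (c), fix $t$ and set $\eta_t(A)=\eta(A+t)$, the time-shifted noise. It is a copy of $\eta$ and is independent of $\mathcal{F}_t\ni u(t)$. We then split the mild equation at time $t$. The semigroup property $p_{t+s-r}=p_s*p_{t-r}$ and a stochastic Fubini theorem, which is justified by the $L^2$ bounds, give
\[
	u(t+s\,,x)=(p_s*u(t))(x)+\int_{(0,s)\times\R^d}p_{s-r}(y-x)\sigma(u(t+r\,,y))\,\eta_t(\d r\,\d y).
\]
So the post-$t$ process is a mild solution driven by $\eta_t$, started at $u(t)$, and it satisfies \eqref{L2:bd}. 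Uniqueness from (b) then identifies it with the solution started from $u(t)$.

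I expect the main obstacle to be measure-theoretic rather than analytic. One must choose jointly measurable modifications consistently, so that $u(t)\in\U0$ genuinely holds. One must also justify the stochastic Fubini step in (c), which is needed to pull $p_s*$ inside the stochastic integral, using only $L^2$ bounds and no continuity of the solution.
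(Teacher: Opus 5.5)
Your proposal is correct and follows the route the paper indicates. The paper omits the proof, saying only that Dalang's argument needs small modifications to handle a random $u_0$ independent of $\eta$, with $\eta_t$ the time-shifted noise, and your enlarged filtration $\sigma(u_0)\vee\sigma(\eta|_{(0,t]})$ is exactly that modification.

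One slip: the nonnegativity of $\Lambda$ that you use to pull $\sup_y$ out of the It\^o isometry does not follow from $\mu\ge0$, which only makes $\Lambda$ positive definite; it is a separate standing assumption that $\Lambda$ is a nonnegative measure, as in Dalang's framework and implicitly throughout the paper.
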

The proof requires making only small modifications to the arguments
of Dalang ({\it ibid.}) to adjust for the fact that $u_0$
can be random. As such, the proof is left to the interested reader.
To be sure, however, we add that the noise $\eta_t$, mentioned in the latter
portion of Proposition \ref{pr:Dalang}, is the noise obtained by time-shifting
$\eta$  by  $t$ units; that is, $\eta_t$ is defined canonically via
\[
	\int_{(0,\infty)\times\R^d} f(t+s)g(y)\,\eta_t(\d s\,\d y) =
	\int_{(t,\infty)\times\R^d} f(s)g(y)\,\eta(\d s\,\d y),
\]
for every non-random, continuous, and
compactly supported function $f:(0\,,\infty)\to\R$
and for every rapidly decreasing test function $g$ on $\R^d$.

We can now interpret the weak-noise condition \eqref{COND:WN}.
For this particular result, we temporarily suspend the  \emph{a priori} 
assumption that
\eqref{COND:WN} holds (see Assumption \ref{ass:WN}).

\begin{lemma}\label{lem:bdd:L2}
	Let $u$ denote the mild solution to \eqref{SPDE} starting from an initial
	profile $u_0\in\U0$ and an independent noise term $\eta$. Then,
	Condition \eqref{COND:WN} implies that
	$\| u(t)\|_{\U0} \lesssim 1+ \|u_0\|_{\U0},$
	uniformly for all $(u_0\,,t)\in\U0\times\R_+$.
	Conversely, suppose that $u_0\equiv1$,
	$\sigma(z)=z$ for all $z\in\R$,
	and $\sup_{t>0}\|u(t)\|_{\U0}<\infty$. Then,
	Condition \eqref{COND:WN} holds.
\end{lemma}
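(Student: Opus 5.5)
The plan is to work with the second-moment function $M(t):=\sup_{x}\E(|u(t\,,x)|^2)$ and derive a renewal-type inequality from the mild form \eqref{mild}.

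\textbf{Upper bound.} Apply the Itô–Walsh isometry to \eqref{mild}. The deterministic part is bounded by $\|p_t*u_0\|$, whose $L^2(\Omega)$ norm is at most $\|u_0\|_{\U0}$ by Minkowski's inequality. For the stochastic part, write $\sigma(z)^2\le (|\sigma(0)|+\lip_\sigma|z|)^2$ and use the Cauchy–Schwarz bound
\[
	\E|\sigma(u(s,y))\sigma(u(s,y'))|\le \sup_y\E|\sigma(u(s,y))|^2 .
\]
This is legitimate because $\Lambda$ is a nonnegative measure; if it is not, one works with $|\hat{\cdot}|$ in Fourier space. The Fourier representation gives
\[
	\int\!\!\int p_{r}(y)p_r(y')\Lambda(\d y\,\d y')=(2\pi)^{-d}\int \e^{-2r\|\xi\|^2}\mu(\d\xi),
\]
and
\[
	\int_0^\infty (2\pi)^{-d}\int \e^{-2r\|\xi\|^2}\mu(\d\xi)\,\d r = (2\pi)^{-d}\int\frac{\mu(\d\xi)}{2\|\xi\|^2}.
\]
The constants should match the paper's normalization, giving $\tfrac12\int\|z\|^{-2}\mu(\d z)$. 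For any $\varepsilon>0$ this yields
\[
	M(t)\le (1+\varepsilon^{-1})\|u_0\|_{\U0}^2 + (1+\varepsilon)\int_0^t k(t-s)\bigl[C_\delta+(1+\delta)\lip_\sigma^2 M(s)\bigr]\d s,
\]
where the kernel $k$ has total mass $\|k\|_{L^1}=\tfrac12\int\|z\|^{-2}\mu(\d z)$. Under \eqref{COND:WN}, choose $\varepsilon,\delta$ small so that $\rho:=(1+\varepsilon)(1+\delta)\lip_\sigma^2\|k\|_{L^1}<1$. Since $M$ is locally bounded by Proposition \ref{pr:Dalang}, taking $\sup_{t\le T}$ gives
\[
	\sup_{t\le T}M\le C(1+\|u_0\|_{\U0}^2)+\rho\sup_{t\le T}M .
\]
Hence $\sup_t M(t)\lesssim 1+\|u_0\|^2_{\U0}$ uniformly in $T$, and taking square roots gives the claim.

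\textbf{Converse.} Take $u_0\equiv1$ and $\sigma(z)=z$. Then $\E u(t\,,x)^2$ does not depend on $x$, and the second moment is exactly computable. By the isometry, $f(t):=\E u(t\,,0)^2$ satisfies
\[
	f(t)=1+\int_0^t\!\!\int\!\!\int p_{t-s}(y)p_{t-s}(y')\,\E[u(s,y)u(s,y')]\,\Lambda(\d y\,\d y').
\]
The difficulty is that the two-point correlation $\E[u(s,y)u(s,y')]$ is not simply $f(s)$. What is needed is a lower bound $\E[u(s,y)u(s,y')]\ge1$. This holds because iterating the isometry expresses it as a sum of positive terms (a Feynman–Kac/chaos expansion with nonnegative $\Lambda$). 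Keeping only the first two chaos terms gives
\[
	f(t)\ge 1+\int_0^t\!\!\int k\,,
\]
which is bounded only if $\int\|z\|^{-2}\mu(\d z)<\infty$. That yields only the first half of \eqref{COND:WN}, not the strict inequality.

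To obtain the factor $\tfrac12$ with strict inequality, I would use the full chaos expansion at $x=y$. The $n$-th chaos term of $f(t)$ is the integral of $n$-fold products of $\hat\mu$-weighted heat kernels, and it dominates $\bigl(\int_0^t\!\int (2\pi)^{-d}\e^{-2r\|\xi\|^2}\mu(\d\xi)\,\d r\bigr)^n$ minus an error. Equivalently, by Feynman–Kac, $f(t)=\E\exp\bigl(\int_0^t\Lambda(\sqrt2\,B_s)\,\d s\bigr)$ for a Brownian motion $B$ (here $\Lambda(\sqrt2 B_s)$ means the evaluation of $\Lambda$ along the path, made rigorous by mollification). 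Then $\sup_t f<\infty$ means that $\E\exp\bigl(\int_0^\infty\Lambda(\sqrt2 B_s)\,\d s\bigr)<\infty$. By Khas'minskii's lemma and its converse for the Green operator $G$, this forces $\|G\Lambda\|_\infty\le1$, and in fact $<1$ via a scaling/strictness argument. Evaluating at $0$ gives $G\Lambda(0)=\tfrac12\int\|z\|^{-2}\mu(\d z)$, which is \eqref{COND:WN}.

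The main obstacle is this converse: making the Feynman–Kac representation rigorous for measure-valued $\Lambda$, and establishing the \emph{strict} inequality, which requires a careful argument rather than just $\le1$. For the strictness I would try an eigenvalue perturbation: if $\|G\Lambda\|=1$, then the expectation $\E\exp\bigl(\int_0^\infty\Lambda(\sqrt2 B_s)\,\d s\bigr)$ diverges, because the series $\sum_n\E\bigl[(\int_0^\infty\Lambda(\sqrt2 B_s)\,\d s)^n/n!\bigr]$ has terms bounded below by a constant.
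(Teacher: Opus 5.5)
Your upper bound is correct and is essentially the paper's argument. It has the same ingredients: the isometry applied to \eqref{mild}, the splitting $\sigma^2(a)\le(1+\varepsilon^{-1})\sigma(0)^2+(1+\varepsilon)\lip_\sigma^2 a^2$, the identity $\int_0^\infty(p_{2s}*\Lambda)(0)\,\d s=\mathscr E=\frac12\int\|z\|^{-2}\mu(\d z)$, and absorption using the local finiteness from Proposition~\ref{pr:Dalang}. With the paper's $\sF$ there is no factor $(2\pi)^{-d}$. Since $\Lambda$ is a nonnegative measure here, your Cauchy--Schwarz step is legitimate. The cross term vanishes by independence, so the extra $(1+\varepsilon^{-1})$ in front of $\|u_0\|_{\U0}^2$ is harmless but unnecessary. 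Your first-chaos lower bound also correctly shows that $\int\|z\|^{-2}\mu(\d z)<\infty$ is necessary.

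The gap is the strict inequality $\mathscr E<1$. The ``converse of Khas'minskii's lemma'' you invoke is false, and this half of the statement fails for correlated noise.

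\emph{Why the gauge can be finite with $\mathscr E\ge 1$.} Put $G=\int_0^\infty p_{2s}\,\d s$. By positive definiteness, $\sup_x(G*\Lambda)(x)=(G*\Lambda)(0)=\mathscr E$. Let $K$ be the operator on $L^2(\R^d)$ with kernel $\Lambda^{1/2}(x)G(x-y)\Lambda^{1/2}(y)$. Then $\sup_t f(t)=\E_0\exp(\int_0^\infty\Lambda(Y_s)\,\d s)$, where $Y$ has transition densities $p_{2s}$. The $n$th term of its series expansion equals $\int G(y)\Lambda^{1/2}(y)\,(K^{n-1}\Lambda^{1/2})(y)\,\d y$, which is at most $\|G\Lambda^{1/2}\|_{L^2}\|\Lambda^{1/2}\|_{L^2}\|K\|^{n-1}$. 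So for nice $\Lambda$ the gauge is finite as soon as $\|K\|<1$. The Schur test with $h=\Lambda^{1/2}$ (note $Kh=h\,(G*\Lambda)\le\mathscr E h$) only gives $\|K\|\le\mathscr E$, and this is typically strict.

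\emph{Explicit counterexample.} Take $d=3$ and $\Lambda(x)=c\,\e^{-\|x\|^2/2}$. Then $K$ is Hilbert--Schmidt with a positive kernel, so it has a Perron eigenfunction $\phi>0$. Since $(G*\Lambda)(x)<\mathscr E$ for $x\neq0$, pairing $Kh$ with $\phi$ gives $\|K\|<\mathscr E$. Choosing $c$ with $\mathscr E=1$ yields $\sup_t\|u(t)\|_{\U0}<\infty$ even though \eqref{COND:WN} fails. In this example your chaos terms decay like $\|K\|^n$ rather than being bounded below. No argument can give $\mathscr E<1$; what is actually necessary is $\mathscr E<\infty$ plus a spectral condition on $K$.

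\emph{The paper's proof of this half has the same problem.} The correct renewal equation is $f(t,x)=1+\int_0^t(p_{2(t-s)}*(f(s)\Lambda))(x)\,\d s$. The paper's $\<p_s,p_s*f(t-s)\Lambda\>$ is only its value at $x=0$. Fatou's lemma then gives only $L(x)\ge1+(\inf L)\,(G*\Lambda)(x)$. The induction $\inf L\ge1+\mathscr E+\mathscr E^2+\cdots$ tacitly replaces $(G*\Lambda)(x)$ by its maximum $\mathscr E$, although here $(G*\Lambda)(x)\to0$ as $\|x\|\to\infty$.
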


\begin{remark}
	It follows readily from the forthcoming proof that
	the second, converse, portion of Lemma \ref{lem:bdd:L2} 
	remains valid for example if its technical hypotheses are
	reduced to $\inf_{z\neq0}|\sigma(z)/z|>0$
	and $\inf_{\in\R^d}u_0>0$.
\end{remark}

\begin{proof}
	Recall  that we are assuming  \eqref{COND:WN}.
	Recall also the notation \eqref{p} for the heat kernel $p$.
	We may then observe that
	\begin{equation}\label{mathscr(E)}
		\mathscr{E} = \frac12\int_{\R^d}
		\frac{\mu(\d\xi)}{\|\xi\|^2} = \int_0^\infty(p_{2s}*\Lambda)(0)\,\d s=
		\int_0^\infty \<p_s\,,p_s*\Lambda\>_{L^2(\R^d)}\,\d s.
	\end{equation} In \eqref{lem:bdd:L2},
	the first identity is the definition of $\mathscr{E}$, and the other two
	are consequences of the Fubini-Tonelli theorem and the fact that the heat
	kernel and its Fourier transform both vanish rapidly at infinity. 
	
	Thanks to \eqref{mild} we may write
	$\E(|u(t\,,x)|^2) = Q_1(t\,,x) + Q_2(t\,,x),$
	where
	\begin{align*}
		Q_1(t\,,x) &= \E\left( \left| (p_t*u_0)(x) \right|^2\right),\\
		Q_2(t\,,x) &= \E\left(\left|\int_{(0,t)\times\R^d}
			p_{t-s}(y-x) \sigma(u(s\,,y))\,\eta(\d s\,\d y)\right|^2\right).
	\end{align*}
	By the Cauchy-Schwarz inequality,
	\[
		\adjustlimits \sup_{t>0}\sup_{x\in\R^d}
		\sqrt{Q_1(t\,,x)} \le 
		\adjustlimits \sup_{t>0}\sup_{x\in\R^d}\int_{\R^d} p_t(x-y) 
		\|u_0(y)\|_{L^2(\Omega)}\,\d y\le 
		\|u_0\|_{\U0}.
	\]
	Assume temporarily that $\Lambda$ is a function. Then,
	\begin{equation}
        \begin{split}
		Q_2(t\,,x) \label{Q_2}
		&= \int_0^t\d s\iint_{\R^d\times\R^d}\d y\,\d z\
			p_{t-s}(y-x)p_{t-s}(z-x)  \\
                &\hspace{2.5cm} \times\E\left[\sigma(u(s\,,y)) 
                \sigma(u(s\,,z)) \right]\Lambda(y-z).
        \end{split}
	\end{equation} 
	Because
	$\sigma^2(a)\le ( 1+ \varepsilon^{-1}) |\sigma(0)|^2
	+ (1+\varepsilon) \lip_\sigma^2 a^2$
	for every $\varepsilon>0$ and $a\in\R$,
	the Cauchy-Schwarz inequality implies that, uniformly for all $t,\varepsilon>0$,
	\begin{equation}\label{eq:sigma_bound} 
	\begin{aligned}
		\adjustlimits\sup_{s\in(0,t]}\sup_{y\in\R^d}
		\E\left|\sigma(u(s\,,y)) \sigma(u(s\,,z)) \right|
		&\le ( 1+ \varepsilon^{-1}) |\sigma(0)|^2
		+ (1+\varepsilon) \lip_\sigma^2 M_t^2,
	\end{aligned}
	\end{equation} 
	where $M_t =\sup_{s\in(0,t]}\|u(s)\|_{\U0}$ for all $t>0$. 
	It might help to pause and
	recall that $M_t<\infty$ for every $t>0$ (Proposition \ref{pr:Dalang}).
	Now we may return to the proof and observe that,
	the preceding, \eqref{mathscr(E)}, and \eqref{Q_2} together imply that
	\[
		Q_2(t\,,x)  \le \left[ ( 1+ \varepsilon^{-1}) |\sigma(0)|^2
		+ (1+\varepsilon)\lip_\sigma^2 M_t^2\right]\mathscr{E}.
	\]
	This inequality is valid also in the more general case that $\Lambda$ is a measure, 
	and for essentially the same reasons, except the integral in \eqref{Q_2} has to be written
	long-hand as a convolution integral with respect to the measure $\Lambda$. 
	It follows from the preceding bounds for $Q_1$ and $Q_2$ that
	\[
		\E\left(|u(t\,,x)|^2\right) \le \|u_0\|_{\U0}^2 
		+ \left[ ( 1+ \varepsilon^{-1}) |\sigma(0)|^2
		+ (1+\varepsilon) \lip_\sigma^2 M_t^2 \right]\mathscr{E}.
	\]
	The right-hand side of the above inequality is independent of 
	$x$ and monotonically increasing in $t$.
	Therefore, we optimize the left-hand side over $(t\,,x)$ in order to find that
	\[
		M_t^2 \le \|u_0\|_{\U0}^2 + \left[ ( 1+ \varepsilon^{-1}) |\sigma(0)|^2
		+ (1+\varepsilon) \lip_\sigma^2 M_t^2 \right]\mathscr{E},
	\]
	for all $t,\varepsilon>0$. Thanks to \eqref{COND:WN} we may
	choose $\varepsilon$ small enough so that $(1+\varepsilon)\lip_\sigma^2\mathscr{E}<1$.
	It follows that, for that choice of $\varepsilon=\varepsilon(\mu\,,\sigma)>0$,
	\[
		M_t^2\le \frac{\|u_0\|_{\U0}^2 +  ( 1+ \varepsilon^{-1}) |\sigma(0)|^2
		\mathscr{E}}{ 1- (1+\varepsilon) \lip_\sigma^2 \mathscr{E}},
	\]
	uniformly for all $t>0$. This proves that $\sup_{t>0} M_t\lesssim 1+\|u_0\|_{\U0}$
	where the implied constant is independent of $u_0$, 
	and so completes the proof of the first part of Lemma \ref{lem:bdd:L2}.
	
	For the second part, we suppose that $u_0(x)=1$
	for all $x\in\R^d$, and $\sigma(z)=z$, following the hypotheses of this part. 
	We also assume that \eqref{COND:WN} fails; that is, that $\mathscr{E}\ge1$;
	see \eqref{mathscr(E)}.
	Then, just
	as we computed $\E(|u(t\,,x)|^2)$ in terms of
	$Q_1$ and $Q_2$, we proceed to find that, for all $t>0$ and $x\in\R^d$,
	\begin{align*}
		\E[ u(t\,,0)u(t\,,x)] &=1 + \int_0^t\d s\int_{\R^d}\d y
			\int_{\R^d}\d z\
			p_{t-s}(y) p_{t-s}(z-x)\Lambda(y-z)\times\\
		&\hskip1in\times \E[ u(s\,,y) u(s\,,z)],
	\end{align*}
	provided additionally that $\Lambda$ is a function [the
	more general case is similar but only messier to write.]
	It follows readily from this that $\E[u(t\,,x)u(t\,,y)]$
	depends on $(x\,,y)$ only through $y-x$ and hence
	\begin{align*}
		\E[ u(t\,,0)u(t\,,x)] &=1 + \int_0^t\d s\int_{\R^d}\d y
			\int_{\R^d}\d z\
			p_{t-s}(y) p_{t-s}(z-x)\Lambda(y-z)\times\\
		&\hskip1in\times \E[ u(s\,,0) u(s\,,z-y)].
	\end{align*}
	In other words, $f(t\,,x)=
	\E[u(t\,,0)u(t\,,x)]$ solves the autonomous equation
	\[
		f(t\,,x) = 1 + \int_0^t \<p_s\,,p_s*f(t-s)\Lambda\>_{L^2(\R^d)}\,
		\d s.
	\]
	[This is valid, as is stated, even when $\Lambda$ is a measure.]
	A standard
	fixed point argument shows that $f(t\,,x)=\lim_{n\to\infty}f_n(t\,,x)$
	uniformly in $x\in\R^d$ and locally uniformly in $t>0$, where
	$f_0(t\,,x)=1$ and 
	\[
		f_{n+1}(t\,,x) = 1 + \int_0^t \<p_s\,,p_s*f_n(t-s)\Lambda\>_{L^2(\R^d)}\,
		\d s,
      \]
	for all $t>0$, $x\in\R^d$, and $n\in\N$. Apply induction in order to see
	that 
	\[
		f_n(t\,,x)\ge1
		\qquad\text{for all $t>0$, $x\in\R^d$, and $n\in\N$.}
	\]
	Among other things, this proves that $f>0$ everywhere.
	Therefore we may apply Fatou's
	lemma in order to see that $L(x)=\liminf_{t\to\infty}f(t\,,x)$ satisfies
	\[
		L(x) \ge 1 + \int_0^\infty\<p_s\,,p_s*L\Lambda\>_{L^2(\R^d)}\,
		\d s
		\qquad\forall x\in\R^d.
	\]
	Because $\int_0^\infty \<p_s\,,p_s* \Lambda\>_{L^2(\R^d)}\,\d s=
	\mathscr{E}$ -- see \eqref{mathscr(E)} -- and since we are assuming
	that \eqref{COND:WN} fails -- that is, $\mathscr{E}\ge1$,
	the preceding inductively yields
	\[
		\inf_{x\in\R^d}L(x) \ge 1 + \mathscr{E} + \mathscr{E}^2\ge\cdots=\infty.
	\]
	Thus we see from
	the Cauchy-Schwarz inequality that $\sup_{t>0}\|u(t)\|_{\U0}
	\ge \lim_{t\to\infty} f(t\,,0)=\infty.$ This completes the proof of 
	the lemma.
\end{proof}

We now return to the bulk of our discussion.
From here on out, Condition \eqref{COND:WN} is once
again assumed to hold.

\begin{lemma}\label{lem:L2_cont} 
	Let $u$ denote the mild solution to \eqref{SPDE} starting from 
	an initial profile $u_0\in\U0$ and an independent noise term $\eta$. 
	Then,
	\begin{equation*}
		\adjustlimits
		\lim_{y\to x} \sup_{t\geq  \varepsilon}
		\E\left( |u(t\,, x) - u(t\,, y)|^2 \right) =0
		\qquad\forall x\in\R^d,\ \varepsilon>0.
	\end{equation*}
\end{lemma}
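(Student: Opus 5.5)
Write $w=y-x$ and, for $t\ge\varepsilon$, split via the mild formulation \eqref{mild}:
\[
	u(t\,,x)-u(t\,,y) = \bigl[(p_t*u_0)(x)-(p_t*u_0)(y)\bigr] + \bigl[I(t\,,x)-I(t\,,y)\bigr],
\]
where $I(t\,,x)$ is the stochastic integral. The plan is to bound each term uniformly in $t\ge\varepsilon$ by a quantity that vanishes as $w\to0$. The uniform moment bound $M:=\sup_{t>0}\|u(t)\|_{\U0}<\infty$ from Lemma \ref{lem:bdd:L2} will be used throughout.

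\textbf{Deterministic term.} By Minkowski's inequality,
\[
	\|(p_t*u_0)(x)-(p_t*u_0)(y)\|_{L^2(\Omega)}\le \|u_0\|_{\U0}\,\|p_t(\cdot-x)-p_t(\cdot-y)\|_{L^1(\R^d)}.
\]
The $L^1$ norm on the right depends only on $\|w\|/\sqrt t$. It is therefore maximized over $t\ge\varepsilon$ at $t=\varepsilon$, where it tends to $0$ as $w\to0$.

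\textbf{Noise term.} By the Walsh isometry together with the Cauchy--Schwarz bound \eqref{eq:sigma_bound},
\[
	\E|I(t\,,x)-I(t\,,y)|^2\le C\int_0^t\bigl\langle |g_s|,|g_s|*\Lambda\bigr\rangle\,\d s,\qquad g_s=p_s(\cdot-x)-p_s(\cdot-y),
\]
with $C=C(\sigma,M)$. This holds when $\Lambda$ is nonnegative; otherwise, one should write the isometry carefully. A cleaner route is to keep the exact expression, where the integrand is the covariance of $\sigma(u)$ tested against $g_s\otimes g_s$. The correlation $\E[\sigma(u(s,a))\sigma(u(s,b))]$ is bounded but not translation invariant, so Fourier methods do not directly apply.

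My first attempt: assume $\Lambda\ge0$, which covers the main examples. Bound $|g_s|\le p_s(\cdot-x)+p_s(\cdot-y)$ for large $s$ and use the exact difference for small $s$. Split $\int_0^t=\int_0^\delta+\int_\delta^t$.

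For the piece on $(0,\delta)$, the crude bound gives at most $4C\int_0^\delta\langle p_s,p_s*\Lambda\rangle\,\d s$. By \eqref{mathscr(E)} and dominated convergence, this is small uniformly in $t$ and $w$ once $\delta$ is small.

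For the piece on $(\delta,\infty)$, the integrand is $\iint |g_s(a)||g_s(b)|\,\Lambda(\d a-b)\,\d b$. I would dominate $|g_s|\le \|w\|\sup_{|\theta|\le1}|\nabla p_s(\cdot-x-\theta w)|$, and compare $|\nabla p_s|\lesssim s^{-1/2}p_{2s}$ up to constants, pointwise. This yields the bound
\[
	\lesssim \|w\|^2\int_\delta^\infty s^{-1}\langle p_{2s},p_{2s}*\Lambda\rangle\,\d s\le \|w\|^2\delta^{-1}\,\mathscr{E}',
\]
which is finite by \eqref{COND:WN}, and is uniform in $t$. The supremum over the shift $\theta$ needs care. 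Replacing it by $p_{3s}$-type kernels, for $\|w\|\le\sqrt{\delta}$, keeps the Gaussian domination valid. Letting $w\to0$ and then $\delta\to0$ finishes the proof.

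\textbf{Main obstacle.} The main obstacle is the small-time region combined with a general, possibly signed, $\Lambda$, together with the non-stationarity of the correlations of $\sigma(u)$. If $\Lambda$ is not a nonnegative measure, I would instead write the correlation as $\E[\sigma(u)\sigma(u)]=\int\!\!\int\dots$ through the spectral measure. Using positive definiteness of $\mu$ and the bound $|\E \sigma\sigma|\le C$, the plan would be to control the quadratic form by $C\int|\widehat{g_s}|^2\,\d\mu$ after a Cauchy--Schwarz step in the random field. This may require an extra argument, since positive-definiteness alone does not give that domination.
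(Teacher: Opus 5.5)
Your argument is correct, and for the noise term it takes a somewhat different route from the paper's proof.

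\textbf{Deterministic term.} You and the paper do the same thing. The paper quotes Lemma 6.4 of \cite{CJKS} to get the explicit rate $\|x-y\|^2/t$; you use scaling and $L^1$-continuity of translation. One small point: ``depends only on $\|w\|/\sqrt t$, therefore maximized at $t=\varepsilon$'' needs a word of justification. Either use $p_t=p_{t-\varepsilon}*p_\varepsilon$ and Young's inequality, or observe directly that $\sup_{t\ge\varepsilon}F(\|w\|/\sqrt t)=\sup_{r\le\|w\|/\sqrt\varepsilon}F(r)\to0$.

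\textbf{Noise term.} The paper reaches the same bound $C\int_0^\infty\d s\iint|q_s(z)q_s(z')|\,\Lambda(z-z')$ that you do. It then concludes in one line by dominated convergence, using the majorant $|q_s|\le p_s(x-\cdot)+p_s(y-\cdot)$. Since that majorant depends on $y$, this step really rests on a generalized (Pratt-type) dominated convergence theorem. That in turn uses continuity of $w\mapsto\int_0^\infty(p_{2s}*\Lambda)(w)\,\d s=\frac12\int\cos(w\cdot\xi)\,\mu(\d\xi)/\|\xi\|^2$.

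Your split at $\delta$ sidesteps this. On $s<\delta$, the crude bound gives at most $4C\int_0^\delta(p_{2s}*\Lambda)(0)\,\d s$. This uses that $p_{2s}*\Lambda$ is positive definite, hence maximal at the origin; it is worth saying explicitly. On $s\ge\delta$ with $\|w\|\le\sqrt\delta$, combine the mean-value bound with $|\nabla p_s|\lesssim s^{-1/2}p_{2s}$ and $p_{2s}(z-v)\lesssim p_{3s}(z)$ for $\|v\|\le\sqrt s$. This gives $\lesssim\|w\|^2\delta^{-1}\int\mu(\d\xi)/\|\xi\|^2$. What your version buys is an explicit modulus of continuity, uniform over all $t>0$, at the cost of a few extra lines.

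\textbf{The obstacle you flag.} The ``main obstacle'' in your last paragraph is not an obstacle for this statement. Following Dalang, the paper takes $\Lambda$ to be a (nonnegative) tempered Borel measure. Its own proof uses $\Lambda\ge0$ in exactly the place you do, to pull out $\sup|\E[\sigma(u)\sigma(u)]|$. So your ``first attempt'' under $\Lambda\ge0$ is already a complete proof of the lemma as posed. The signed case lies outside its scope, and you need not pursue the spectral idea.
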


\begin{proof} 
	Let us define 
	\[
		q_t( x\,, y\,; z):= p_t( x-z) - p_t(y-z) \quad \forall t>0,\
		x\,, y\,, z \in \R^d.
	\]
	Thanks to \eqref{mild}, we may write for every $t>0$ and
	$x,y\in\R^d$,
	\begin{equation}\label{EQQ}
		\E \left( |u(t\,, x) - u(t\,, y)|^2\right) = Q_1(t\,, x\,, y) + Q_2(t\,, x\,, y),
	\end{equation}
	where 
	\begin{align*}
	    Q_1(t\,, x\,, y)&:= \left(\int_{\R^d} q_t(x\,, y\, ; z) \E[u_0(z)] \, \d z\right)^2 \\
	    Q_2(t\,, x\,, y)&:=  \E\left(\left|\int_{(0,t)\times\R^d}
				q_{t-s}(x\,, y\, ; z) \sigma(u(s\,,z))\,\eta(\d s\,\d z)\right|^2\right).
	\end{align*}
	Let us first consider $Q_1$.  According to Lemma 6.4 of \cite{CJKS},
	\[
		\int_{\R^d}|p_r(v-a)-p_r(a)|\,\d a\lesssim \min(\|v\|/\sqrt{r}\,,1),
	\]
	uniformly for all $r>0$ and $v\in\R^d$. Therefore, the Cauchy-Schwarz inequality
	implies that
	\[
		Q_1(t\,, x\,, y) \le  \|u_0\|^2_{\U0}\, \left(\int_{\R^d} | q_t(x\,, y\, ; z) | \, \d z\right)^2
		\lesssim \|u_0\|^2_{\U0}\,  \frac{\|x-y\|^2}{t},
	\]
	uniformly for all $t>0$ and $x, y \in \R^d$. In particular,
	\begin{equation}\label{Q1}
		\adjustlimits\lim_{y\to x} \sup_{t\geq \varepsilon} Q_1(t\,, x\,, y) = 0
		\quad\forall \varepsilon>0,\ x\in\R^d.
	\end{equation}
	
	Now we consider $Q_2$. As was done in Lemma \ref{lem:bdd:L2}, let us
	assume temporarily that $\Lambda$ is a function.  In that case,
	\begin{align*}
		Q_2(t\,,x\,, y) &= \int_0^t\d s\iint_{\R^d\times\R^d}\d z\,\d z'\
			q_{t-s}(x\,, y\,, ;z)q_{t-s}(x\,, y\, ; z')  \\
		&\hspace{2.5cm} \times\E\left[\sigma(u(s\,,z)) 
			\sigma(u(s\,,z')) \right]\Lambda(z-z').
	\end{align*}
	Since $\sigma$ is Lipschitz continuous,
	$|\sigma(a)|\leq |\sigma(0)|+\lip_\sigma |a|$ for every $a\in\R$.
	Therefore, Lemma \ref{lem:bdd:L2} implies that 
	\[ 
		\adjustlimits
		\sup_{t\geq0} \sup_{z, z' \in \R^d} \E\left|\sigma(u(s\,,z)) 
		\sigma(u(s\,,z'))\right| <\infty. 
	\]
	Consequently,
	\[
		Q_2(t\,, x\,, y) \lesssim \int_0^\infty \d s
		\iint_{\R^d\times\R^d}\d z\,\d z'\ |q_s(x\,, y\,, ; z)\,  q_s(x\,, y\, ; z') | \Lambda(z-z'). 
	\]
	The preceding remains valid when $\Lambda$ is a measure, except we need to interpret
	the right-hand side (in the only feasible way possible) as a convolution with respect to
	the measure $\Lambda$ in that case.
	Clearly,
	\[
		\lim_{y\to x} q_s(x\,, y\, ; z) =0\quad\forall s>0,\ x,z\in\R^d. 
	\]
	In addition,
	\[
		|q_s(x\,, y\, ; z)\,  q_s(x\,, y\, ; z')| \le 
		[p_s(x-z)+p_s(y-z)][ p_s(x-z')+p_s(y-z')],
	\]
	and, for all $x\,, y \in \R^d$, 
	\begin{align*}
		&\int_0^\infty \d s\iint_{\R^d\times\R^d}\d z\,\d z' p_s(x-z)p_s(y-z') \Lambda(z-z') \\
		&\le\int_0^\infty \<p_s\,,p_s*\Lambda\>_{L^2(\R^d)}\,\d s <\infty,
			\quad \text{thanks to Condition \ref{COND:WN}.}
	\end{align*}  
	Therefore, it follows from the preceding and the dominated convergence theorem that
	\begin{equation}\label{Q2}
		\adjustlimits\lim_{y\to x} \sup_{t>0}  Q_2(t\,, x\,, y) =0
		\quad\forall x\in\R^d.
	\end{equation}
	The lemma follows from \eqref{EQQ}, \eqref{Q1}, and \eqref{Q2}.
\end{proof} 

Part (c) of Proposition \ref{pr:Dalang} is a concrete way to say that the
infinite-dimensional process $\{u(t)\}_{t\ge0}$ is Markovian,
without explicitly identifying a nice space in which that Markov process
resides. 
Because of the minimality of the underlying conditions under which Proposition 
\ref{pr:Dalang} holds [mainly \eqref{cond:Dalang}],
the random-field solution to \eqref{SPDE} has very little regularity beyond 
(joint) measurability.
As such, the above Markov process does not live on a nice-enough path space to allow
full use of well-established methods of ergodic theory as for example is developed in 
Da Prato and Zabczyk \cite{DZ}. Nevertheless, it still makes perfect sense to discuss
invariant measures.  Before we do that, let us recall the Banach space $\U0$
of ``good candidate initial profiles'' for \eqref{SPDE}; see Definition \ref{def:U0}.

\begin{definition}\label{def:inv}
	We say that $u_0\in\U0$ is \emph{invariant} if the solution $u$
	to \eqref{SPDE}, started from $u_0$ and an independent copy
	of $\eta$, has the property that $\{u(t)\}_{t\ge0}$
	is a stationary Markov process. In turn, the law of an invariant 
	initial profile $u_0\in\U0$ is said to be an \emph{invariant measure}
	for \eqref{SPDE}.
\end{definition}

Definition \ref{def:inv} is presented in this way in order
to make its intentions clear.
In order for this definition to be fully meaningful, however, we need to address
the following two questions:
\begin{enumerate}[$\mathscr{Q}_1.$]
      \item What is the state space of the Markov process $u=\{u(t)\}_{t\ge0}$
            in Definition \ref{def:inv}?
       \item What is a ``large-enough'' sigma-algebra on which the laws of
             the elements of $\U0$ are defined?
\end{enumerate}

To answer the first question let us consider \eqref{SPDE},
started at some $u_0\in\U0$ and run using an independent copy of
$\eta$. Recall from Proposition \ref{pr:Dalang} that
$u(t)\in\U0$ for every $t\ge0$. It then follows from 
the joint measurability of $u(t)\in\U0$, and from
Fubini's theorem, that
$u(t)\in L^2(m)$ a.s.~for every finite Borel measure $m$ on $\R^d$
and for all $t>0$. Thus, we can -- and will -- view 
$u=\{u(t)\}_{t\ge0}$ as a Markov process on the state space
\begin{equation}\label{def:S_d}
	\bS_d = \bigcap_{m\in \tilde M_1(\R^d)} L^2(m),
\end{equation}
where $\tilde M_1(\R^d)$ denotes any countable collection of probability 
measures on $\R^d$ that includes at least the following (countably many) probability measures: 
\begin{enumerate}[(i)]
	\item $m = \delta_q$ for every $q \in \Q^d$;
	\item $m(\d x) = |B(0,r)|^{-1} \1_{B(0,r)}(x)\,\d x$ for every $r \in \Q_+$;
	\item $m(\d x) = p_t(x-q)\,\d x$ for every $t \in \Q_+$ and $q \in \Q^d$.
\end{enumerate}

We enumerate $\tilde M_1(\R^d) = \{m_n\}_{n \ge 1}$ and metrize $\bS_d$ with the metric
\[
	d_{\bS_d}(f, g) := \sum_{n=1}^\infty 2^{-n} \,
	\frac{\|f - g\|_{L^2(m_n)}}{1 + \|f - g\|_{L^2(m_n)}},
\]
where we identify $f = g$ in $\bS_d$ whenever $d_{\bS_d}(f\,, g) = 0$; that is, 
whenever $f = g$, $m_n$-almost everywhere for every integer $n \geq 1$. 
Once endowed with the metric $d_{\bS_d}$, the space $\bS_d$ 
is seen to be a Polish space. We denote the resulting Borel $\sigma$-algebra by 
$\sS_d := \mathcal{B}(\bS_d)$. 

It is easy to see that, if $u_0, v_0 \in \U0$, 
then $u_0, v_0 \in \bS_d$ almost surely. 
In addition, since  $\E \|u_0 -v_0\|_{L^2(m_n)} \leq \|u_0-v_0\|_{\U0}$, it follows that 
\begin{equation} \label{eq:metric_U0} 
\E \left[  d_{\bS_d}(u_0, v_0)  \right] \leq \|u_0-v_0\|_{\U0}.
\end{equation}

This addresses question $\mathscr{Q}_1$ above. Let us
pause to observe that every jointly measurable random field $U\in\U0$ can be regarded as a
$\bS_d$--valued random variable (by mapping $\omega\mapsto[U(\omega\,,\cdot)]$), and we will
always work with the jointly measurable representative $U(\omega\,,x)$ on the underlying
probability space. Moreover, the $L^2(\Omega)$-continuity 
in the spatial variable (Lemma \ref{lem:L2_cont}) implies that the finite-dimensional
distributions of such random fields (whenever $t>0$) are determined by their values on the 
countable dense set $\Q^d$. We now turn to question $\mathscr{Q}_2$.

Question $\mathscr{Q}_2$ asks to describe
probability laws on ``measurable'' subsets of $\bS_d$ by identifying what
is meant by ``measurable.'' In other words, we seek to find a natural sigma-algebra
$\sS_d$ of subsets of $\bS_d$ on which the law of every $u_0\in\U0$
can be defined. Since $\bS_d$ is a Polish space, that can be done simply by setting
 \begin{equation}\label{def:sS_d}
	\sS_d : = \mathcal{B}(\bS_d),
\end{equation} 
where $\mathcal{B}(\bS_d)$  denotes the Borel sigma field on $\bS_d$.
In fact,  we can  have that 
\[
	\sS_d=\mathcal{B}(\bS_d)= \bigvee_{m_n\in 
	\tilde M_1(\R^d)} \pi_n^{-1} \left( \mathscr{B}(L^2(m_n))\right),
\] 
where $\pi_n: \bS_d \to L^2(m_n)$ denotes the projection map that is defined via 
$\pi_n(u_0)=u_0$.  This completes our discussion of  Definition \ref{def:inv}. 

Let us pause to mention the following simple result. 

\begin{lemma}\label{lem:DC}
	If $\nu_1,\nu_2 \in  M_1(\bS_d)$ and
	$\int_{\bS_d}f\,\d\nu_1=\int_{\bS_d}f\,\d\nu_2$
	for every bounded and Lipschitz continuous function 
	$f:\bS_d \to\R$, then
	$\nu_1=\nu_2$. 
\end{lemma}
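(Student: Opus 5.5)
We will prove that bounded Lipschitz functions form a measure-determining class on the metric space $(\bS_d\,,d_{\bS_d})$; this is the standard portmanteau-type fact, and we only need to check that it applies here. Because $\sS_d=\mathcal{B}(\bS_d)$ is the Borel $\sigma$-algebra of the metric $d_{\bS_d}$, the closed sets form a $\pi$-system generating $\sS_d$. By Dynkin's $\pi$--$\lambda$ theorem, it therefore suffices to show that $\nu_1(F)=\nu_2(F)$ for every closed set $F\subseteq\bS_d$.

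Fix a closed set $F\neq\varnothing$ and define
\[
	f_k(h) = \max\left(0\,, 1-k\,\mathrm{dist}(h\,,F)\right)\qquad\forall h\in\bS_d,\ k\in\N,
\]
where $\mathrm{dist}(h\,,F)=\inf_{g\in F}d_{\bS_d}(h\,,g)$. The map $h\mapsto\mathrm{dist}(h\,,F)$ is $1$-Lipschitz, so each $f_k$ is bounded by one and $k$-Lipschitz. By hypothesis, $\int f_k\,\d\nu_1=\int f_k\,\d\nu_2$ for every $k$.

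Since $F$ is closed, $\mathrm{dist}(h\,,F)=0$ if and only if $h\in F$. Consequently $f_k\downarrow\1_F$ pointwise as $k\to\infty$. The bounded convergence theorem then gives $\nu_1(F)=\lim_k\int f_k\,\d\nu_1=\lim_k\int f_k\,\d\nu_2=\nu_2(F)$. The case $F=\varnothing$ is trivial.

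There is no real obstacle here. The only point to check is the identification of points in $\bS_d$: after quotienting by $d_{\bS_d}=0$, $d_{\bS_d}$ is a genuine metric and $\sS_d$ is its Borel $\sigma$-algebra. This is exactly how $\bS_d$ and $\sS_d$ were set up in \eqref{def:S_d} and \eqref{def:sS_d}, so the argument goes through without using separability or any properties of the SPDE.
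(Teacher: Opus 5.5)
Your proof is correct, but it takes a slightly different route from the paper. The paper gives no argument of its own. It cites the classical fact that bounded Lipschitz functions are measure-determining on Polish spaces, and then appeals to the assertion that $(\bS_d\,,d_{\bS_d})$ is Polish, which it calls easy to verify but does not check.

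What you wrote is the standard proof of that classical fact. You reduce to closed sets by the $\pi$--$\lambda$ theorem, and then approximate with the Lipschitz functions $f_k=\max(0\,,1-k\,\mathrm{dist}(\cdot\,,F))\downarrow\1_F$. As you note, this uses only that $d_{\bS_d}$ is a genuine metric whose Borel sets are $\sS_d$.

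So your route is self-contained and a bit more general. It makes the lemma independent of the completeness and separability of $\bS_d$. The paper's route is a one-line citation, but it leans on the Polish property, which is asserted there rather than proved.
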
 

It is a classical fact that the collection of all bounded Lipschitz continuous functions 
forms a measure-determining class on  Polish spaces.
Lemma \ref{lem:DC} is a ready consequence of the easy-to-verify fact
that the metric space $(\bS_d\,,d_{\bS_d})$ is a Polish space.

With Definitions \ref{def:U0} and \ref{def:inv} in hand, we can
present the following basic result. 

\begin{lemma}\label{lem:Liouville}[The Liouville property]
	If $u_0\in\U0$ is an invariant random field,
	then $\textnormal{mean}(\nu):=\E[u_0(x)]$ does not depend on $x\in\R^d$.
\end{lemma}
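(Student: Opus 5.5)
Set $m(x)=\E[u_0(x)]$; it is bounded because $|m(x)|\le\|u_0\|_{\U0}$. The plan is to show that $m$ is a bounded solution of the heat equation, after which Liouville's theorem forces it to be constant.

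First we take expectations in the mild formulation \eqref{mild}. The stochastic integral has mean zero: it is a Walsh integral of a predictable integrand, and its second moment is finite by Lemma \ref{lem:bdd:L2}. The initial profile $u_0$ is independent of $\eta$, and $u_0$ is jointly measurable with $\sup_y\E|u_0(y)|<\infty$, so Fubini's theorem applies. Together these give
\[
	\E[u(t\,,x)] = (p_t*m)(x)\qquad\forall t>0,\ x\in\R^d.
\]

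Next we use invariance. Since $\{u(t)\}_{t\ge0}$ is stationary, $u(t)$ has the same law as $u_0$ as $\bS_d$-valued random variables. We need to pass from equality of laws on $\bS_d$ to $\E[u(t\,,x)]=m(x)$ pointwise, and this is the delicate measure-theoretic step, since elements of $\bS_d$ are equivalence classes.

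The cleanest route is to test against the probability measures in $\tilde M_1(\R^d)$. For $m_n$ of type (iii), that is $p_s(\cdot-q)\,\d x$, or of type (ii), the functional $f\mapsto\int f\,\d m_n$ is continuous on $L^2(m_n)$. It is therefore $\sS_d$-measurable and $L^2$-integrable under both laws. Equality of laws then gives
\[
	\int \E[u(t\,,x)]\,m_n(\d x)=\int m(x)\,m_n(\d x).
\]
Applying this to type (iii) measures yields
\[
	(p_s*p_t*m)(q)=(p_s*m)(q)
\]
for all rational $s,q$, hence $p_{s+t}*m=p_s*m$ everywhere by continuity in $q$ and $s$. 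Letting $s\downarrow0$ gives $p_t*m=m$ for Lebesgue-almost every $x$. For the atoms $\delta_q$ one gets directly $\E[u(t\,,q)]=m(q)$ for $q\in\Q^d$, but we do not need this.

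A subtlety is that $m$ need not be continuous, since $u_0$ is only measurable. So we conclude that $m$ agrees almost everywhere with the function $g=p_t*m$. The function $g$ is smooth and bounded, and it satisfies $p_s*g=g$ for all $s>0$, so it is a bounded harmonic function. Liouville's theorem (or the Fourier-side argument that $\sF g$ is supported at $\{0\}$) shows that $g$ is constant.

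This gives $m=c$ almost everywhere. To upgrade this to every $x$, we use the $\delta_q$ test measures together with Lemma \ref{lem:L2_cont}. Since $u_0\overset{d}{=}u(t)$ for every $t>0$, the field $u_0$ inherits the $L^2(\Omega)$-continuity of Lemma \ref{lem:L2_cont} on the version relevant to $\bS_d$. Hence $m(x)=\E[u(t\,,x)]=(p_t*m)(x)=c$ for all $x$. Alternatively, one can interpret the statement modulo the identification built into $\bS_d$.

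The main obstacle is exactly this identification of pointwise means from equality of laws on $\bS_d$. The harmonic-function step is routine.
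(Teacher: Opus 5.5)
Your core argument is the paper's: take expectations in \eqref{mild} to get $\E[u(t\,,x)]=(p_t*m)(x)$, use invariance to obtain $m=p_t*m$, and finish with Liouville. Where you depart is in taking invariance to mean only equality of laws on $(\bS_d\,,\sS_d)$. That part of your argument is correct and more careful than the paper, which simply reads $\E[u(t\,,x)]=\E[u_0(x)]$ off from invariance pointwise. Testing against the type (iii) measures gives $p_{s+t}*m=p_s*m$, hence $p_t*m=m$ Lebesgue-a.e., hence $m=c$ a.e. The atoms $\delta_q$ give $m(q)=\E[u(t\,,q)]=(p_t*m)(q)=c$ for every $q\in\Q^d$.

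The final upgrade to every $x$ fails, though. Equality of laws on $\bS_d$ does not transfer the $L^2(\Omega)$-continuity of Lemma \ref{lem:L2_cont} to the given representative $u_0$; that lemma is a statement about $u(t)$ for $t>0$. Moreover, you then invoke $m(x)=\E[u(t\,,x)]$ for arbitrary $x$. That is precisely the pointwise marginal identity you had just said cannot be extracted from laws on $\bS_d$, so the step is circular.

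Under your reading the pointwise claim is in fact false. Let $u_0$ be invariant and let $x_0$ not be an atom of any measure in $\tilde M_1(\R^d)$. Then $\tilde u_0=u_0+\1_{\{x_0\}}$ is the same $\bS_d$-valued random variable, and $p_t*\tilde u_0=p_t*u_0$. So the two solutions coincide for $t>0$ and $\tilde u_0$ is invariant, yet $\E[\tilde u_0(x_0)]=m(x_0)+1$.

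So you must choose one of two readings. You can read Definition \ref{def:inv} as the paper's proof implicitly does, namely that $u(t\,,x)$ and $u_0(x)$ have the same law for each $x$. Then $m=p_t*m$ holds everywhere, $m$ is itself smooth, and the Liouville step closes with no a.e.\ detour. Or you can keep the $\bS_d$ reading and state the conclusion as ``$m$ is constant on $\Q^d$ and Lebesgue-a.e.'', which is all that reading can deliver. Your closing ``alternatively'' sentence gestures at this, but the proof as written claims more than it proves.
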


\begin{proof}
	We aim to prove that the function $\R^d\ni x\mapsto m(x)=\E[u_0(x)]$ is a constant.
	With this in mind, let $u$ evolve according to \eqref{SPDE}, with initial profile $u_0$
	and an independent copy of the noise $\eta$. 
	Since $\sup_{y\in\R^d}|m(y)|\le\sup_{x\in\R^d}\E(|u_0(x)|^2)^{1/2}<\infty$
	and $u_0$ are jointly measurable, Fubini's theorem and \eqref{mild} together imply  that
	\[
		\E [u(t\,,x)] = \int_{\R^d} p_t(x-y)m(y)\,\d y\qquad
		\forall t>0,\ x\in\R^d.
	\]
	Because $u_0$ is invariant, the left-hand side is equal to 
	$m(x)=\E[u_0(x)]$. In other words, the mean function $m$
	satisfies
	\begin{equation}\label{harmonic}
		m=p_t*m\qquad\forall t>0.
	\end{equation}
	It is well known that \eqref{harmonic} implies that $m$
	is a bounded, harmonic function
	on $\R^d$, hence a constant by Liouville's theorem
	from harmonic analysis.\footnote{%
		Here is a quick proof: \eqref{harmonic}
		and the dominated convergence theorem together ensure that the bounded function
		$m$ is infinitely differentiable. Consequently,
		\[
			0 = \frac{\partial}{\partial t}\int_{\R^d} p_t(x-y)m(y)\,\d y
			=\int_{\R^d}\Delta p_t(x-y)m(y)\,\d y = (p_t*\Delta m)(x)
			\quad\forall t>0,\ x\in\R^d.
		\]
		Send $t\to0+$ to deduce the asserted harmonicity property.}
	This completes the proof.
\end{proof}

\section{Duality and invariance}\label{sec:duality}

It has been well known for a long time that,
in rough terms, the ergodic theory of ``weak-noise systems''
boils down to an $L^2$-analysis using a duality argument; see 
Chapter 10 of Liggett \cite{Liggett2005} as well as 
already-mentioned Ref.s \cite{CoxGreven1994,GrevenDenHollander2007,Shiga1992}
for precise pointers to the literature on particle systems,
and \cite{ChenEisenberg2024,ChenOuyangTindelXia2024,TessitoreZabczyk1998} 
for the more recent works on associated SPDEs.
Our work is also based on $L^2$-analysis and duality, though
our duality argument appears to be particularly robust and simple
to describe.

In order to describe the said duality method,
let us choose and fix an epoch $t>0$ and define a Gaussian noise $\cev{\eta}_t$ on $[0\,,t]\times\R^d$
by setting $\cev{\eta}_t$ to be its time reversal from time $t$; that is,
\[
	\cev{\eta}_t(s\,,x)=\eta(t-s\,,x)\qquad\forall s\in[0\,,t],\ x\in\R^d.
\]
One can make a more precise description of $\cev{\eta}_t$ by 
defining its Wiener (hence also It\^o-Walsh) integrals as follows:
\begin{equation}\label{duality}
	\int_{(0,t)\times\R^d}\varphi(s\,,x)\,\cev{\eta}_t(\d s\,\d x)
	= \int_{(0,t)\times\R^d}\varphi(t-s\,,x)\,\eta(\d s\,\d x),
\end{equation}
whenever $\varphi:\R_+\times\R^d$ is non random and satisfies
$\int_0^t  \left( \varphi(s)*\tilde\varphi(s)*\Lambda\right)(0) \, \d s<\infty,$
where $\tilde{\varphi}(r\,,y)=\varphi(r\,,-y)$ for all $r>0$ and $y\in\R^d$.
We can compute the covariance  functional associated with the right-hand side of
\eqref{duality} in order to see that $\cev{\eta}_t$ has the same law as the restriction of
$\eta$ to $[0\,,t]\times\R^d$.\footnote{This is basically saying that if 
	$\{B_s\}_{s\in[0,t]}$ is Brownian motion by time $t$, 
	then so is $\{ B_t-B_{t-s} \}_{s\in[0,t]}$, valid even if $B$ is
	a Brownian motion on a suitable space of distributions.}

In order to carry out the asserted duality argument for our SPDEs,
let $\{\cev{u}_t(s, x)\}_{(s\,, x)\in [0, t]\times\R^d}$ denote the solution to \eqref{SPDE}
up to time $t$, started from an arbitrary initial profile $u_0\in\U0$ in the sense of 
Definition \ref{def:U0},
but where the noise $\eta$ is replaced by $\cev{\eta}_t$. That is,
$\cev{u}_t$ solves, for all $s\in(0\,,t]$ and $x\in\R^d$,
\begin{equation}\label{mild:t}
	\cev{u}_t(s\,,x) =  (p_s*u_0)(x) +\iint_{(0,s)\times\R^d} 
	p_{s-r}(y-x) \sigma(\cev{u}_t(r\,,y))\,\cev{\eta}_t(\d r\,\d y).
\end{equation}
 Proposition \ref{pr:Dalang} ensures that 
the preceding SPDE on $(0\,,t)\times\R^d$ has a unique solution,
and therefore by uniqueness alone we also have
\begin{equation}\label{duals}
	\{\cev{u}_t(s\,,x)\}_{(s, x)\in [0,t]\times\R^d}
	\ \text{and}\
	\{u(s\,,x)\}_{(s, x) \in [0,t]\times\R^d}
	\ \text{have the same law}.
\end{equation}
In this way, we may (and will) study the asymptotic distributional behavior of the spatial
random field $u(t) = \{u(t\,,x)\}_{x\in\R^d}$, as $t\to\infty$,
by instead studying the asymptotic distributional behavior of the spatial random field 
$\cev{u}_t(t) = \{\cev{u}_t(t\,,x)\}_{x\in\R^d}$. This 
simple but effective idea is the basis of our ``duality argument.'' The end result is the following.

\begin{proposition}\label{pr:duality}
	Consider \eqref{SPDE} started from $u_0\in\U0$ and 
	an independent copy of the noise $\eta$. 
	Assume that the weak-noise condition \eqref{COND:WN} holds,
	and that there exists a spatial random field $v_0\in\U0$ such that
	\begin{equation}\label{cond:homogenize}
		\|p_t*u_0 - v_0\|_{\U0} \to0
		\quad\text{as $t\to\infty$}.
	\end{equation}
	Then, there exists a spatial random field $v\in\U0$ such that
	\begin{equation}\label{eq:duality}
		\| \cev{u}_t(t) - v \|_{\U0} \to0
		\quad\text{as $t\to\infty$}.
	\end{equation}
\end{proposition}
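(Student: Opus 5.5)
The plan is to show that $\{\cev{u}_t(t)\}_{t>0}$ is Cauchy in the Banach space $\U0$ as $t\to\infty$. The limit $v\in\U0$ then exists by completeness. The point of time reversal is that $\cev{\eta}_t$ is a fixed noise run backwards from time $t$. The noise increments driving $\cev{u}_t$ near its terminal time $s=t$ are the increments of $\eta$ near time $0$, and these are shared by $\cev{u}_t$ and $\cev{u}_{t'}$ for $t<t'$. So we compare both processes through their mild forms evaluated at the final time.

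Writing \eqref{mild:t} at $s=t$ and changing variables through \eqref{duality} gives
\[
	\cev{u}_t(t\,,x) = (p_t*u_0)(x) + \int_{(0,t)\times\R^d} p_{r}(y-x)\,\sigma\big(\cev{u}_t(t-r\,,y)\big)\,\eta(\d r\,\d y).
\]
Here the integrand at $\eta$-time $r$ is $\sigma$ evaluated at the dual process at time $t-r$. Set $w_t(r\,,y):=\cev{u}_t(t-r\,,y)$. Then $w_t$ is adapted in the $\eta$-time $r$: it depends on $u_0$ and on the noise on $(r\,,t)$ in reverse. That creates a measurability problem, and I would handle it as follows.

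The cleaner route is to compare $\cev{u}_t(t)$ and $\cev{u}_{t'}(t')$ through the intermediate field $\cev{u}_{t'}(t'-t)$. By the Markov property (Proposition \ref{pr:Dalang}(c)), applied to the reversed noise, $\cev{u}_{t'}(t')$ is the time-$t$ solution driven by the same noise $\cev{\eta}_t$, restricted to $\eta$ on $(0\,,t)$, but started from $\cev{u}_{t'}(t'-t)$ instead of $u_0$. That starting field is independent of $\eta|_{(0,t)}$. So both fields are time-$t$ solutions with the same noise and different initial data $u_0$ and $U:=\cev{u}_{t'}(t'-t)$. Subtracting the mild forms and using the Lipschitz bound and the $L^2$ computation of Lemma \ref{lem:bdd:L2} with $\sigma(0)$ absent, I get for $D(s):=\|\cdot\|_{\U0}$-difference of the two solutions
\[
	\sup_{s\le t} D(s)^2 \le \sup_{s\le t}\|p_s*(u_0-U)\|_{\U0}^2 + \lip_\sigma^2\,\mathscr{E}\,\sup_{s\le t}D(s)^2 .
\]
The weak-noise condition \eqref{COND:WN} gives $\lip_\sigma^2\mathscr{E}<1$. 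This is only uniform, though, and does not decay. To get decay I would split the stochastic integral at time $s\le t-T$ versus $s>t-T$ in forward time, equivalently $\eta$-time $r\ge T$ versus $r<T$. The tail $r\ge T$ contributes at most $C\int_T^\infty(p_{2r}*\Lambda)(0)\,\d r$, which vanishes as $T\to\infty$ by \eqref{mathscr(E)}, with $C$ uniform by Lemma \ref{lem:bdd:L2}. The piece $r<T$ is a fixed-horizon equation, where the deterministic term $p_{t-r}*(u_0-U)$ for large $t$ must be controlled.

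The main obstacle is exactly this deterministic term. We need $\|p_t*u_0 - p_t*U\|_{\U0}\to0$ where $U=\cev{u}_{t'}(t'-t)$. Here I use the hypothesis \eqref{cond:homogenize} together with the fact that $p_t*U$ equals $p_t*p_{t'-t}*u_0$ plus $p_t$ applied to a stochastic integral. The first part is close to $v_0$ by \eqref{cond:homogenize}, and so is $p_t*u_0$. For the second part, the heat smoothing at time $t$ turns the noise contribution into one whose $L^2$ norm is bounded by $C\int_{t}^{\infty}(p_{2r}*\Lambda)(0)\,\d r$ (semigroup property $p_t*p_{s}=p_{t+s}$), which tends to $0$. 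Combining these estimates with a Gronwall or contraction argument based on $\lip_\sigma^2\mathscr{E}<1$ yields $\sup_{t'>t}\|\cev{u}_t(t)-\cev{u}_{t'}(t')\|_{\U0}\to0$. That gives the Cauchy property, and hence \eqref{eq:duality}.
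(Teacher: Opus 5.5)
Your proposal is correct and is essentially the paper's argument. The paper also proves that $t\mapsto\cev{u}_t(t)$ is Cauchy in $\U0$ through a renewal inequality with kernel $\lip_\sigma^2\,(p_{2r}*\Lambda)(0)$, whose total mass is $<1$. That inequality is forced by exactly your two pieces: the deterministic discrepancy (its $Q_0$, handled by \eqref{cond:homogenize}) and the extra noise on $\eta$-times $(t\,,t')$ (its $Q_2$, handled by the tail $\int_t^\infty (p_{2r}*\Lambda)(0)\,\d r$). Recasting these as the initial-data discrepancy $p_q*(u_0-U)$ with $U=\cev{u}_{t'}(t'-t)$ keeps every stochastic integral forward-adapted, which the paper leaves implicit. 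Your windowed contraction plays the role of the paper's iteration plus dominated convergence.
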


\begin{proof}
	In the first, -- and main -- step of the proof,
	we assume that $\Lambda$ is a function,
	and then aim to prove that $\{\cev{u}_t(t\,,x)\}_{t>0}$ 
	is a Cauchy  sequence in $L^2(\Omega)$,
	uniformly in $x$. With this aim in mind, let us
	write, for every $x\in\R^d$ and $0<s<t<T$,
	\[
		\E\left( | \cev{u}_T(T-s\,,x) - \cev{u}_t(t-s\,,x)|^2\right) = Q_0 + Q_1 + Q_2,
	\]
	where
	\begin{align*}
		Q_0 &= Q_0(t\,,T;s\,,x)
			=\E\left(\left| (p_{T-s}*u_0)(x) - (p_{t-s}*u_0)(x) \right|^2\right),\\
		Q_1 &= Q_1(t\,,T;s\,,x)
			= \int_s^t\d r\int_{\R^d}\d y\int_{\R^d}\d y'\
			p_{r-s}(y-x)p_{r-s}(y'-x)\times\\
		&\hskip2in\times \E\left[\sD(r\,,y)\sD(r\,,y')\right]\Lambda(y-y'), 
	\end{align*}
	where $\sD(r\,,y)= \sigma(\cev{u}_T(T-r\,,y)) - \sigma(\cev{u}_t(t-r\,,y))$, and
	\begin{align*}
		Q_2 &= Q_2(t\,,T;s\,,x)\\
		&= \int_t^T\d r\int_{\R^d}\d y\int_{\R^d}\d y'\
			p_{r-s}(y-x)p_{r-s}(y'-x)\sC(r\,,y\,,y',T)\Lambda(y-y'),
	\end{align*}
	where
	\[
		\sC(r\,,y\,,y',T)
		=\E[\sigma(\cev{u}_T(T-r\,,y))\sigma(\cev{u}_T(T-r\,,y'))].
	\]
	Next we estimate $Q_0$, $Q_1$, and $Q_2$ in turn.
	Before  we begin, let us pause to observe that every $Q_j$ is
	nonnegative because it is the second moment of a random variable.
	We will tacitly use this fact in the coming discussion.  Let 
	\[ 
		f(t\,, T; s) = \| \cev{u}_T(T-s) - \cev{u}_t(t-s)\|_{\U0}^2
		\quad\text{and}\quad
		g(t, T; s) = \sup_{x\in\R^d} Q_0(t\,,T;s\,,x).
	\]
	Thanks to Lemma \ref{lem:bdd:L2} and \eqref{duals}, 
	and because of the global Lipschitz nature of $\sigma$,
	\begin{align*}
		|\sC(r\,,y\,,y',T)| \le \sup_{s>0}
		\| \sigma(u(s))\|_{\U0}^2 =\mathbf{C}<\infty,
	\end{align*}
	From the above we obtain
	\begin{align*}
		&f(t\,,T; s)\\
		&\le g(t\,,T; s) +\lip_{\sigma}^2
			\int_s^t \d r \int_{\R^d} \d y \int_{\R^d} \d y'\
			p_{r-s}(y) \Lambda(y-y') p_{r-s}(y')  f(t\,,T; r) \\
		&\quad + \mathbf{C}\int_t^T \d r \int_{\R^d} \d y 
			\int_{\R^d} \d y'\ p_{r-s}(y) \Lambda(y-y') p_{r-s}(y') .
      \end{align*}
      We rewrite the above, equivalently, as 
      \begin{equation} \label{eq:f:g}\begin{split}
		f(t\,,T;s) &\le g(t\,,T;s) +\lip_{\sigma}^2\int_s^t 
      		\left(p_{2(r-s)}*\Lambda\right)(0)\, f(t\,,T; r)\,\d r \\
      	&\qquad + \mathbf{C}\int_t^T \left(p_{2(r-s)}*\Lambda\right)(0)
            	\,\d r .
      \end{split}\end{equation}
      Thus, we may apply \eqref{eq:f:g} recursively in order to obtain
      the following:
      \begin{align*}
      	&f(t\,,T;s) \le g(t\,,T;s) +\\
		&\  + \sum_{n=1}^{\infty}  \lip_{\sigma}^{2n}
			\int_s^t\d s_1 \int_{s_1}^t\d s_2\cdots \int_{s_{n-1}}^t \d s_n 
			\prod_{i=0}^{n-1} \left(p_{2(s_{i+1}-s_i)}*\Lambda\right)(0)
			\, g(t\,,T; s_n) \\
		&\  +\mathbf{C}
			(1+ \mathscr{Q})
			\int_t^T \left(p_{2(r-s)}*\Lambda\right)(0)\,\d r,
	\end{align*}
	where
	\[
		\mathscr{Q} = \mathbf{C}\sum_{n=1}^{\infty}\lip_{\sigma}^{2n}
		\int_s^t\d s_1\int_{s_1}^t\d s_2 \cdots \int_{s_{n-1}}^t \d s_n\
		\prod_{i=0}^{n-1} \left(p_{2(s_{i+1}-s_i)}*\Lambda\right)(0).
      \]
	Since $u_0\in\U0$, it follows readily that
	$|g(t\,,T;s)|\lesssim1$ uniformly in its variables. Consequently,
      \begin{align*}
		&f(t\,,T;s) \\
		&\lesssim 1 +  \int_t^T  \left(p_{2(r-s)}*\Lambda\right)(0)\,\d r 
			+ \sum_{n=1}^{\infty} 
			\left(\lip_{\sigma}^2\int_0^{\infty}  \left(p_{2r}*\Lambda\right)(0)
			\,\d r\right)^n\\
		&= 1 +  \int_t^T  \left(p_{2(r-s)}*\Lambda\right)(0)\,\d r 
			+ \sum_{n=1}^{\infty} 
			\left(\frac{\lip_{\sigma}^2}{2}\int_0^{\infty}  \left(p_r*\Lambda\right)(0)
			\,\d r\right)^n.
      \end{align*}
      The weak-noise condition \eqref{COND:WN} ensures
     	that the preceding sum is convergent. 
      Next we make two quick observations:
      \begin{enumerate}[(A)]
      	\item Owing to \eqref{cond:homogenize},
            	$\lim_{t\to\infty}\sup_{T>t} g(t\,,T;s) = 0$
            	for every $s>0$; and
		\item By \eqref{COND:WN},
			$ \lim_{t\to\infty}\sup_{T>t}
			\int_t^T  \left(p_{2(r-s)}*\Lambda\right)(0)\,\d r = 0$. 
	\end{enumerate}
	Therefore, (A), (B), and the dominated convergence theorem 
	together imply that
	$\lim_{t\to\infty}\sup_{T>t}f(t\,,T;s) = 0$ for every $s>0$.
      
	It follows from the preceding discussion that, when $\Lambda$ is a function, 
	the mapping $t\mapsto \cev{u}_t(t)$ is Cauchy in the Banach space $\U0$. 
	The same fact holds in the more general case that $\Lambda$ 
	is a measure, except that the expressions for $Q_1$ and $Q_2$ need to be
	now written out in terms of somewhat long convolution integrals against
	the measure $\Lambda$. All else holds true as is stated. 
	This completes the proof.
\end{proof}

Thanks to \eqref{duals},
we can interpret Proposition \ref{pr:duality} loosely as follows: 
Consider the solution to our SPDE \eqref{SPDE}, started from an initial profile
in $\U0$ that is independent of $\eta$ and satisfies \eqref{eq:duality}. Then,
the law of $u(t)$ converges weakly 
to the law of a random field $v\in\U0$ as $t\to\infty$. That in
turn suggests that $v$ must be invariant (Definition \ref{def:inv}).
Our next result (Proposition \ref{pr:duality:inv}) will make this discussion precise.
\begin{remark}\label{rem:W2}
      For every two spatial random fields
      $X,Y\in\U0$, define the following infinite-dimensional variation on the usual
      $W_2$-Wasserstein distance for probability laws (see Villani \cite{Villani}):
      \begin{equation}\label{rho}
      	\rho(X\,,Y) = \inf\|X_1-Y_1\|_{\U0}
      \end{equation}
      where infimum is taken over all couplings $(X_1\,,Y_1)$ 
      whose respective marginal laws are those of $(X\,,Y)$. Let $\nu_X$ and $\nu_Y$ respectively denote the laws of
      $X$ and $Y$, both viewed as probability distributions on 
      $(\bS_d\,,\sS_d)$, and  write $\rho(\nu_X\,,\nu_Y) = \rho(X\,,Y).$
       
	Let us note that  the topology induced by $\rho$
	is finer than that of weak convergence in the sense that
	if $\lim_{n\to\infty}\rho(\nu_n\,,\nu)=0$, then $\nu_n$ converges weakly 
	to $\nu$. To see this, choose and fix a bounded Lipschitz continuous function 
	$f: \bS_d \to \R$. Let $(X_n, X)$ denote a coupling of $(\nu_n, \nu)$ in order to see that
	\[
		\left| \int_{\bS_d} f \,\d\nu_n - \int_{\bS_d} f \,\d\nu \right| 
		= \left| \E f(X_n) - \E f(X) \right| 
		\leq \text{Lip}(f) \, \E [ d_{\bS_d}(X_n, X) ].
	\]
	Take the infimum over all such couplings and recall \eqref{eq:metric_U0} in order
	to see that if $\lim_{n\to\infty}\rho(\nu_n\,,\nu)=0$, then
	\[  
		\left| \int_{\bS_d} f \,\d\nu_n - \int_{\bS_d} f \,\d\nu \right| 
		\leq\lip_f\,  \rho(\nu_n, \nu)\to 0\quad\text{as $n\to\infty$}.
	\]
	In other words, $\nu_n \Rightarrow \nu$, as was desired.
\end{remark}

\begin{proposition}\label{pr:duality:inv}
	In Proposition \ref{pr:duality},
	$\lim_{n\to\infty}\rho(u(t)\,,v)=0$,
	where $\rho$ denotes the Wassstein type distance in \eqref{rho}.
	Moreover, the limiting random field $v$ is invariant (see Def.~\ref{def:inv}).
	Equivalently, the law of $v$, viewed as a probability measure on 
	$(\bS_d\,,\sS_d)$, is an invariant measure for the dynamics
	of \eqref{SPDE}.
\end{proposition}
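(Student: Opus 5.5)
The first assertion should follow from the duality \eqref{duals} together with Proposition \ref{pr:duality}. For each $t>0$, $u(t)$ and $\cev{u}_t(t)$ have the same law. Hence $(\cev{u}_t(t)\,,v)$ is a coupling of the laws of $u(t)$ and $v$, and by definition \eqref{rho},
\[
	\rho(u(t)\,,v)\le \|\cev{u}_t(t)-v\|_{\U0}\to0\qquad\text{as }t\to\infty .
\]
By Remark \ref{rem:W2}, the law of $u(t)$ then converges weakly on $(\bS_d\,,\sS_d)$ to the law of $v$. Note also that $v\in\U0$, being an $\U0$-limit; if needed, a jointly measurable version can be obtained as an a.s.\ limit along a subsequence.

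For invariance, I would first prove an $L^2$-Lipschitz (contraction-type) estimate for the dynamics. Let $X,Y\in\U0$ be initial profiles defined on a common probability space, and drive both with the same independent noise. Let $u^X,u^Y$ denote the corresponding solutions. The plan is to show that
\[
	\sup_{s\ge0}\|u^X(s)-u^Y(s)\|_{\U0}\le K\|X-Y\|_{\U0}.
\]
This comes from repeating the proof of Lemma \ref{lem:bdd:L2} for the difference. The deterministic part is bounded by $\|X-Y\|_{\U0}$. The noise part is bounded by $\lip_\sigma^2\,\mathscr{E}\,\sup_{r\le s}\|u^X(r)-u^Y(r)\|_{\U0}^2$. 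Since $\lip_\sigma^2\mathscr E<1$ by \eqref{COND:WN}, this can be absorbed, giving $K=(1-\lip_\sigma^2\mathscr E)^{-1/2}$. Taking infima over couplings, which requires the noise to be independent of the coupled initial data, yields
\[
	\rho(P_s\nu_X\,,P_s\nu_Y)\le K\rho(\nu_X\,,\nu_Y),
\]
where $P_s\nu$ is the law at time $s$ started from law $\nu$. This uses that the law of the solution depends only on the law of the initial profile, which follows from uniqueness in Proposition \ref{pr:Dalang}.

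Next, fix $s>0$. By the Markov property in Proposition \ref{pr:Dalang}(c), $u(t+s)$ has law $P_s(\mathrm{law}\,u(t))$. Therefore
\[
	\rho(u(t+s)\,,P_s\nu_v)\le K\rho(u(t)\,,v)\to0 .
\]
On the other hand, $\rho(u(t+s)\,,v)\to0$ by the first part. The triangle inequality for $\rho$, which holds via the standard gluing of couplings on the Polish space $\bS_d$, then gives $\rho(P_s\nu_v\,,\nu_v)=0$. By Remark \ref{rem:W2} and Lemma \ref{lem:DC}, the law of the solution at time $s$ started from $v$ equals $\nu_v$ for every $s$. Combined with the Markov property, this shows that the process started from $v$ is stationary: its finite-dimensional laws in time are time-shift invariant because the one-time marginals are. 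Hence $v$ is invariant in the sense of Definition \ref{def:inv}.

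The main obstacle is measure-theoretic. In the coupling step, one must realize an arbitrary coupling of $(\nu_X,\nu_Y)$ on $\bS_d$ as jointly measurable random fields in $\U0$, independent of a noise. One must also check that $\rho$ really only depends on laws on $(\bS_d,\sS_d)$, so that the gluing lemma applies. I would handle this by working on a product space: couplings and noise live on independent factors, and the jointly measurable representatives are taken as described after \eqref{eq:metric_U0}. Lemma \ref{lem:L2_cont} ensures that finite-dimensional laws are determined on $\Q^d$.
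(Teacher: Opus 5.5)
Your proposal is correct and is essentially the paper's argument: the duality coupling $(\cev{u}_t(t),v)$ gives $\rho(u(t),v)\to0$, and invariance follows from the global-in-time stability estimate (your contraction bound is exactly Lemma~\ref{lem:stability}) together with the Markov property, which show that the law of $u(t+s)$ converges both to the law of $v$ and to the time-$s$ law of the solution started from $v$. The only difference is packaging: the paper applies the stability estimate directly to the concrete coupling $(\cev{u}_t(t),v)$, restarted with a common independent noise, and compares $\E f$ for bounded Lipschitz $f$ via Remark~\ref{rem:W2} and Lemma~\ref{lem:DC}, which avoids the infimum over arbitrary couplings and the gluing/triangle inequality for $\rho$ that you flagged as the main obstacle.
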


Proposition \ref{pr:duality:inv} is a modest extension of 
Theorem 1.2 of Chen and Eisenberg \cite{ChenEisenberg2024}
and Theorem 5.10 of Chen et al \cite{ChenOuyangTindelXia2024}
by establishing the existence of invariance measures on the smaller
space $M_1(\bS_d)$ using the weak-noise condition
\eqref{COND:WN} which is optimal 
for $L^2$ methods thanks to Lemma \ref{lem:bdd:L2}.

The standard method for proving the existence of invariant measures is the time-honored
Krylov-Bogoliubov existence theorem \cite[Theorem 3.1.1]{DZ}.
Because the Markov process $\{u(t)\}_{t\ge0}$ lives on the state
space $\bS_d$ that has insufficient topological regularity, in part
since we wish to produce invariant measures on path space, the latter theorem 
does not seem to be applicable in the present 
setting. Thus, we provide an alternative route.

\begin{proof}
	Thanks to \eqref{duals}, we can produce a coupling $(\cev{u}_t(t)\,,v)$
	of two random fields that have the  
	same respective marginals as $(u(t)\,,v)$ for every $t\ge0.$ Therefore,
	Remark \ref{rem:W2} and Proposition \ref{pr:duality} together imply
	the $\lim_{t\to\infty}\rho(u(t)\,,v)=0$. 
	
	We proceed to prove the
	invariance of $v$ next by using a different coupling
	construction.
	Let $u_v$ denote the solution to \eqref{SPDE}
	with initial profile $v$ and using an independent copy of
	the noise $\eta$. Our goal is to prove that
	 \begin{equation}\label{goal:inv}
	 	\E[ f(u_v(s))] = \E [f(v)],
	 \end{equation}
	 for every $s>0$ and all bounded and Lipschitz continuous functions
	 $f:\bS_d \to\R$. This and Lemma \ref{lem:DC} together
	 yield the proposition.
	
	Recall that $u$ denotes the solution to 
	\eqref{SPDE} with the initial data $u_0$,
	and that $u_0$ satisfies \eqref{cond:homogenize}.
	Thus, it follows from Proposition \ref{pr:duality} and Remark \ref{rem:W2} that
	 \begin{equation}\label{E[f(v)]}
		\lim_{t\to\infty}\E[f(u(t+s)] = \E[f(v)],
	\end{equation}
	 for every $s>0$ and all bounded and Lipschitz continuous functions
	 $f:\bS_d \to\R$.
	 Next we compute the left-hand side of \eqref{E[f(v)]} in a different
	 way. 
	 
	 Choose and temporarily fix an arbitrary epoch $t>0$,
	 and let $\vec{u}_t=\{\vec{u}_t(s\,,x)\}_{s>0,x\in\R^d}$ denote the
	 solution to \eqref{SPDE} started from $\cev{u}_t(t)\in\U0$
	 (see Proposition \ref{pr:Dalang}), using an independent
	 copy of the noise $\eta$. It might help to
	 also recall that the space-time random field $\cev{u}_t$
	 is the terminal point in the  SPDE \eqref{mild:t} 
	 with the initial profile $u_0$ that is dual to 
	 the SPDE \eqref{SPDE} that yields $u$. Choose and fix a time $s>0$.
	 In accord with Proposition \ref{pr:duality},
	 \[
	 	\lim_{t\to\infty}\| \vec{u}_t(0)-v\|_{\U0} =
	 	\lim_{t\to\infty} \| \cev{u}_t(t)-v\|_{\U0} =0.
	 \]
	 Therefore, by stability (Lemma \ref{lem:stability}),
	 \[
	 	\adjustlimits\lim_{t\to\infty} \sup_{s>0}\| \vec{u}_t(s)-
		\tilde{u}_v(s) \|_{\U0}=0,
	 \]
	 where $\tilde{u}_v$ solves \eqref{SPDE}, starting from initial profile $v\in\U0$
	 and using the same noise as was used to define $\vec{u}_t$.
	 Since $\tilde{u}_v$ has the same law as $u_v$, it follows from
	 Remark \ref{rem:W2} that
	 \begin{equation}\label{E[f(v)]:2}
	 	\lim_{t\to\infty}\E[ f(\vec{u}_t(s))] = \E[f(u_v(s) )],
	 \end{equation}
	 for every $s>0$ and all bounded and Lipschitz continuous functions $f:\bS_d \to\R$. 
	 Thanks to \eqref{duals} and the Markov property (Proposition
	 \ref{pr:Dalang}(c)), 
	 the conditional law of $u(t+s)$ given the pre-$t$ process
	 $\{u(r)\}_{r\in(0,t)}$ has the same distribution as the conditional law of
	 $\vec{u}_t(s)$ given $\vec{u}_t(0)=\cev{u}_t(t)$. Therefore, 
	 \eqref{duals} yields the identity 
	 $\E[ f(\vec{u}_t(s))]=\E[ f(u(t+s))]$,
	 and \eqref{E[f(v)]:2} yields
	 \begin{equation}\label{E[f(v)]:3}
		\lim_{t\to\infty}\E[ f(u(t+s))] = \E[f(u_v(s))],
	 \end{equation}
	 for every $s>0$ and all bounded and Lipschitz continuous functions $f:\bS_d \to\R$.  
	 Compare the right-hand sides of
	 \eqref{E[f(v)]} and \eqref{E[f(v)]:3} in order to deduce
	 \eqref{goal:inv}, thereby conclude the proof.
\end{proof}

\section{Annealing, and the main result}\label{sec:main}

We have made earlier references to  objects
such as ``annealed invariant measures,'' for example
in the Introduction. Let us spend a few paragraphs and make the quoted terms more
precise. Once that is done, we will finally be ready to state
the main theorem of this paper.

Recall from Section \ref{sec:info} that, among other
things, invariant measures for \eqref{SPDE} are probability measures
on the measure space $(\bS_d\,,\sS_d)$, defined in \eqref{def:S_d}
and \eqref{def:sS_d}, and that the Banach space $(\U0\,,\|\,\cdots\|_{\U0})$
is a good space of ``candidate initial profiles'' for our SPDE (Definition \ref{def:U0}).

\begin{definition}\label{def:anneal}
	Suppose that $u_0\in\U0$ is invariant for the dynamics in \eqref{SPDE}.
	Then we say that the random field $u_0$ -- equivalently its law
	in the sense of Section \ref{sec:info} -- is \emph{annealed} if
	$\sup_{x\in\R^d}\Var[(p_t*u_0)(x)]\to0$ as $t\to\infty$.
\end{definition}

In order to explain Definition \ref{def:anneal},
we will first need to choose an arbitrary number $\theta\in\R$ and
let $V^{(\theta)}$ denote the random-field solution to the SPDE
\eqref{SPDE} started identically at $V^{(\theta)}(0)\equiv\theta$. 
Thanks to \eqref{mild}, this is equivalent to
\begin{equation}\label{v:theta}
	V^{(\theta)}(t\,,x) = \theta + \iint_{(0,t)\times\R^d}
	p_{t-s}(y-x) \sigma(V^{(\theta)}(s\,,y))\,\eta(\d s\,\d y),
\end{equation}
for all $t>0$ and $x\in\R^d$. Because $x\mapsto V^{(\theta)}(0\,,x)$
is identically the constant $\theta$, condition \eqref{cond:homogenize}
holds manifestly whence it follows from Proposition \ref{pr:duality} that
there exists a spatial random field $X_\theta\in\U0$ such that
\begin{equation}\label{X_theta}
	\lim_{t\to\infty} \left\| \cev{V}^{(\theta)}_t(t) - X_\theta
	\right\|_{\U0}=0,
\end{equation}
where, for every fixed $t>0$, the spatial random field
\[
	\cev{V}^{(\theta)}_t=
	\left\{\cev{V}^{(\theta)}_t(s\,,x)\right\}_{s\in(0,t),
	x\in\R^d}
\]
is defined by \eqref{mild:t}
for every $t>0$. To be sure, this means that $\cev{V}^{(\theta)}_t$
solves the following for every $t>0$ fixed:
\[
	\cev{V}^{(\theta)}_t(s\,,x) =  \theta +\iint_{(0,s)\times\R^d} 
	p_{s-r}(y-x) \sigma\left(\cev{V}^{(\theta)}_t(r\,,y)\right)\cev{\eta}_t(\d r\,\d y),
\]
almost surely, simultaneously for all $s\in(0\,,t]$ and $x\in\R^d$.
And of course, $\cev{\eta}_t$ comes from \eqref{duality}.
Thanks to the duality relation \eqref{duals} -- applied with $u$ replaced by
$V^{(\theta)}$ -- and in accord with the definition
\eqref{rho} of the Wasserstein-like distance $\rho$, it follows 
from \eqref{X_theta} that
\begin{equation}\label{V:X_theta}
	\lim_{t\to\infty}\rho\left( V^{(\theta)}(t)\,,X_\theta\right)=0.
\end{equation}

Next, consider an invariant $u_0\in\U0$ that is annealed in the sense
of Definition \ref{def:anneal}, and let $\theta=\E[u_0(x)]$ for one (hence all)
$x\in\R^d$. Let $\{u(t)\}_{t\ge0}$ denote the solution to \eqref{SPDE}
started from $u_0$ using an independent copy of $\eta$, and define $V^{(\theta)}$
as in \eqref{v:theta} using the same noise $\eta$. According to the asymptotic stability
Lemma \ref{lem:u-v} below, and thanks to \eqref{rho},
\[
	\rho\left( u(t) \,, V^{(\theta)}(t)\right)
	\le \|u(t)-V^{(\theta)}\|_{\U0}\to0\quad\text{as $t\to\infty$}.
\]
Proposition \ref{pr:duality:inv} shows that $X_\theta$ is invariant --
equivalently, its law $\nu_\theta$ of $X_\theta$
is an invariant measure (Definition \ref{def:inv}) --
and the preceding two displays together imply that
\[
	\lim_{t\to\infty}\rho\left( u(t)\,,X_\theta\right)=0.
\]
To summarize, if we start \eqref{SPDE}
using an annealed initial profile $u_0\in\U0$ with $\theta=\E[u_0(0)]$,
and an independent copy
of $\eta$, then the solution converges to the invariant measure $\nu_\theta$ regardless
of the details of the law of the initial profile. This suggests (but does not prove) that
the convergence to the invariant measure takes place slowly enough that the effect
of the initial profile are averaged out, since the discussion around $X_\theta$
is a rigorous way to say that only $\theta=\E[u_0(0)]$ is needed
in order to converge to the invariant measure $\nu_\theta$.
The previous sentence is precisely what ``annealed
measures'' are in statistical mechanics, and therefore
justifies the terminology of Definition \ref{def:anneal}.

The probability measures 
$\{\nu_\theta\}_{\theta\in\R}$
are those that were announced in the Informal Theorem of the Introduction,
and play a key role in the ergodic theory of \eqref{SPDE}, as suggested
by the same informal theorem. Thus, let us define them formally
for future reference.

\begin{definition}\label{def:nu_theta}
	For every $\theta\in\R$, let $\nu_\theta$ denote the law of $X_\theta$,
	where $X_\theta$ is defined in \eqref{X_theta}.
\end{definition}

We are now ready to state the main theorem of this paper.
The following states, and extends, formally the Informal Theorem of
the Introduction. Recall that we are tacitly assuming that
the weak-noise condition \eqref{COND:WN} holds.

\begin{theorem}\label{th:main}
	Recall Definition \ref{def:nu_theta}.
	Then, for all $\theta\in\R$:
	\begin{enumerate}[\rm(a)]
		\item $\nu_\theta$ is an annealed and 
			ergodic invariant measure for \eqref{SPDE}.
		\item $\nu_\theta$ is the law of a (strongly) stationary
			random field on $\R^d$.
		\item $\nu_\theta$ is the only mean-$\theta$
			annealed invariant measure for
			\eqref{SPDE}.
		\item If $\theta_1\neq\theta_2$ then $\nu_{\theta_1}$
      			and $\nu_{\theta_2}$ are mutually singular.
	\end{enumerate}
\end{theorem}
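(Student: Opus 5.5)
I would prove the four parts in the order (b), (a)-annealed, (c), (a)-ergodic, (d), because each later part leans on the earlier ones. Invariance of $\nu_\theta$ is already supplied by Proposition \ref{pr:duality:inv}.

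\emph{Part (b).} Because $\eta$ is spatially homogeneous and the initial profile $\theta$ is constant, uniqueness (Proposition \ref{pr:Dalang}) shows that, for every fixed $t$, the law of $\{\cev V^{(\theta)}_t(t\,,x+y)\}_{x\in\R^d}$ does not depend on $y$. Convergence in $\U0$ in \eqref{X_theta} gives $L^2(\Omega)$ convergence of the finite-dimensional marginals. Hence $X_\theta$ has shift-invariant finite-dimensional distributions, so it is strongly stationary. I would check this on $\Q^d$ and use Lemma \ref{lem:L2_cont} to pass to all points.

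\emph{Annealing and part (c).} By the same argument, $X_\theta$ is weakly stationary with covariance $C(z)=\Cov[X_\theta(0)\,,X_\theta(z)]$. This covariance is the $t\to\infty$ limit of $\E[\cev V_t(t\,,0)\cev V_t(t\,,z)]-\theta^2$, and the mild formula \eqref{v:theta} gives the bound
\[
	|C(z)|\lesssim \int_0^\infty (p_{2s}*\Lambda)(z)\,\d s = c\,(\mathscr v*\Lambda)(z).
\]
We then have
\[
	\Var[(p_t*X_\theta)(x)]=\iint p_t(a)p_t(b)C(a-b)\,\d a\,\d b\lesssim \int_0^\infty (p_{2t+2s}*\Lambda)(0)\,\d s,
\]
and this tends to $0$ by \eqref{COND:WN} and dominated convergence. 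That proves $\nu_\theta$ is annealed.

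For uniqueness, let $u_0$ be annealed and invariant with mean $\theta$; Lemma \ref{lem:Liouville} makes the mean constant. I would prove the asymptotic stability Lemma \ref{lem:u-v}, namely $\|u(t)-V^{(\theta)}(t)\|_{\U0}\to0$ when both solutions use the same noise. Its proof would run through the mild form:
\[
	\E|u(t\,,x)-V^{(\theta)}(t\,,x)|^2\le \E|(p_t*u_0)(x)-\theta|^2+\lip_\sigma^2\int_0^t(p_{2(t-s)}*\Lambda)(0)\,\|u(s)-V^{(\theta)}(s)\|^2_{\U0}\,\d s.
\]
Annealing makes the first term vanish. I would then close the estimate with a renewal/Gronwall-type argument whose total mass $\lip^2_\sigma\mathscr E<1$. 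Splitting the time integral into $[0,t-K]$ and $[t-K,t]$ gives $\limsup\le \lip_\sigma^2\mathscr E\limsup$, hence the limit is $0$; boundedness comes from Lemma \ref{lem:bdd:L2}. Combining this with \eqref{V:X_theta} and invariance gives $\mathrm{law}(u_0)=\mathrm{law}(u(t))\to\nu_\theta$, so $\mathrm{law}(u_0)=\nu_\theta$ by Remark \ref{rem:W2} and Lemma \ref{lem:DC}.

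\emph{Time ergodicity.} This is the step I expect to be the main obstacle, because $\bS_d$ is not a nice state space for standard Markov theory. The plan is to show that $\nu_\theta$ is extremal among invariant measures. Suppose $\nu_\theta=\lambda\nu'+(1-\lambda)\nu''$ with $\nu',\nu''$ invariant and $0<\lambda<1$. Then $\nu',\nu''\ll\nu_\theta$, and each has finite $\U0$-norm in the sense of the law of some field in $\U0$ (I would argue this via densities bounded by $1/\lambda$). Since variance is concave in the law and nonnegative,
\[
	\lambda\,\Var_{\nu'}[(p_t*h)(x)]\le \Var_{\nu_\theta}[(p_t*h)(x)]\to0,
\]
so $\nu'$ is annealed. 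Its mean is some $\theta'$, so part (c) gives $\nu'=\nu_{\theta'}$. Since $\nu'\ll\nu_\theta$ and the mean of $\nu_\theta$ is $\theta$, we get $\theta'=\theta$. I would justify this by applying part (d) applied to $\nu_{\theta'}$ and $\nu_\theta$ (proved below, without circularity), which forces $\theta'=\theta$. Hence $\nu'=\nu''=\nu_\theta$, so $\nu_\theta$ is extremal, and extremal invariant measures are time-ergodic.

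\emph{Part (d).} The annealing computation gives $\Var[(p_t*X_\theta)(0)]\to0$, while $\E[(p_t*X_\theta)(0)]=\theta$. So $(p_t*X_\theta)(0)\to\theta$ in $L^2(\Omega)$, hence almost surely along a sequence $t_n\to\infty$ (one sequence can be chosen to serve both $\theta_1$ and $\theta_2$). The set $A_\theta=\{h\in\bS_d:\ (p_{t_n}*h)(0)\to\theta\}$ is $\sS_d$-measurable, since $h\mapsto (p_t*h)(0)$ is continuous on $L^2(p_t\,\d x)$, which belongs to $\tilde M_1$ when $t$ is rational. We have $\nu_\theta(A_\theta)=1$, and the sets $A_{\theta_1}$, $A_{\theta_2}$ are disjoint when $\theta_1\ne\theta_2$. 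This gives mutual singularity.
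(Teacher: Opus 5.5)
\textbf{What matches the paper, and where you differ soundly.} Your (b), the annealing half of (a), and (c) follow the paper. Stationarity passes to the limit from the approximating fields; you use $\cev V^{(\theta)}_t(t)$, which is equivalent by \eqref{duals}. Your bound $|C(z)|\lesssim\int_0^\infty(p_{2s}*\Lambda)(z)\,\d s$ is the estimate \eqref{R_t} behind Lemma~\ref{lem:decorr}, applied after rather than before letting $t\to\infty$. Part (c) is Lemma~\ref{lem:u-v} combined with \eqref{V:X_theta}. Your (d) is a valid shortcut: annealing already gives $(p_t*X_\theta)(0)\to\theta$ in $L^2(\Omega)$. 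Moreover, $h\mapsto(p_t*h)(0)$ is continuous on $\mathbb{S}_d$ for rational $t$, because $p_t(x)\,\d x\in\tilde M_1(\R^d)$. So you do not need ball averages or Lemma~\ref{lem:decorr:2}. Your extremality argument is also correct: concavity of variance makes $\nu'$ annealed, then (c) identifies it, and (d) together with $\nu'\ll\nu_\theta$ forces $\theta'=\theta$.

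\textbf{The gap.} The problem is the final step, ``extremal invariant measures are time-ergodic.'' That implication is a theorem about Markov processes that have a transition kernel $P_t(h,\cdot)$ on the state space and a Markov property for the path. Neither is available here. The SPDE is only solved from random fields in $\mathbb{U}_0$, not from an arbitrary point $h\in\mathbb{S}_d$. Proposition~\ref{pr:Dalang}(c) only describes the post-$t$ process driven by $\eta_t$. This is exactly the obstacle you flagged, and the plan assumes it away rather than resolving it. To close it you would need to do three things.
\begin{enumerate}
\item[(i)] Define $P_tf(h)=\E[f(u(t))\mid u(0)=h]$ for $\nu_\theta$-a.e.\ $h$, using regular conditional laws on the Polish space $\mathbb{S}_d$.
\item[(ii)] Prove the Markov property $\E[f(u(t+s))\mid\mathscr{F}_t]=P_sf(u(t))$. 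One way is to change measure by a bounded $\mathscr{F}_t$-measurable density; this keeps $\eta_t$ independent of $u(t)$ and gives $u(t)$ a law with bounded density relative to $\nu_\theta$. Then use that the joint law of (initial field, solution) is determined by the law of (initial field, noise).
\item[(iii)] Show that a $P_t$-invariant set $B$ with $0<\nu_\theta(B)<1$ makes $\nu_\theta(\cdot\mid B)$ an invariant law in the sense of Definition~\ref{def:inv}; it can be realized in $\mathbb{U}_0$ via the density $\le1/\nu_\theta(B)$. Then deduce that $\frac1t\int_0^tf(u(s))\,\d s\to\int f\,\d\nu_\theta$ from the two-time Markov property and von Neumann's mean ergodic theorem.
\end{enumerate}
This is doable, but it is real work, and it is precisely what the paper sidesteps.

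\textbf{How the paper avoids this.} The paper couples $u$, started from $X_\theta$, to $V^{(\theta)}$ via Lemma~\ref{lem:u-v}. It then resamples the noise on $[0,T]$ and uses \eqref{COND:WN} to show $\|\bar V^{(\theta)}(t)-V^{(\theta)}(t)\|_{\mathbb{U}_0}\to0$. Hence the time-average limit is independent of $\mathscr{F}_T$ for every $T$, and is therefore constant by a 0--1 argument. That route needs no transition kernels. Yours, once (i)--(iii) are supplied, has the merit of obtaining ergodicity as a structural consequence of (c) and (d).
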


We will prove Theorem \ref{th:main} in Section \ref{sec:Proof} below,
after we discuss Conjecture \ref{conj2} in the next section. 

Let us conclude this section with the following characterization of
annealed random fields that are weakly spatially stationary;
this result was referred to in the Introduction.

\begin{proposition}\label{pr:anneal:LLN}
	If $u_0\in\U0$ is an invariant random field that is
	spatially weakly stationary, then it is annealed if and only if 
	it satisfies the law of large numbers \eqref{LLN}. In particular, an invariant
	$u_0\in\U0$ is  annealed if it is both spatially stationary and spatially ergodic. 
\end{proposition}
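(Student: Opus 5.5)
Everything rests on the covariance function $C(z)=\Cov[u_0(0)\,,u_0(z)]$, which is well defined by weak stationarity and bounded by $\|u_0\|_{\U0}^2$. By the Liouville property (Lemma \ref{lem:Liouville}) the mean $\theta=\E[u_0(x)]$ is constant. Fubini's theorem (joint measurability together with $u_0\in\U0$) then lets us compute both quantities appearing in the statement as quadratic forms in $C$:
\[
	\Var[(p_t*u_0)(x)]=\iint p_t(x-y)p_t(x-y')\,C(y-y')\,\d y\,\d y'=\int_{\R^d}p_{2t}(z)\,C(z)\,\d z,
\]
which does not depend on $x$. Similarly,
\[
	\E\Big(\Big|(2R)^{-d}\!\int_{[-R,R]^d}u_0\,\d x-\theta\Big|^2\Big)=\int_{\R^d}\phi_R(z)\,C(z)\,\d z,
\]
where $\phi_R=\1_{B_R}*\tilde\1_{B_R}/|B_R|^2$ and $B_R=[-R,R]^d$. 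Both $p_{2t}$ and $\phi_R$ are probability densities, and both spread out at the scales $\sqrt t$ and $R$ respectively. So the task reduces to showing that the Gaussian averages of $C$ tend to zero if and only if the box-autocorrelation averages do. The normalization $R^{-d}$ versus $(2R)^{-d}$ in \eqref{LLN} is irrelevant to convergence.

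The key tool is Bochner's theorem. If $C$ is measurable, bounded and positive definite, then by Bochner's theorem (applied after noting that $L^2(\Omega)$-continuity is not guaranteed, so we pass to the spectral representation of the $L^2$-valued process via its weak form), $C$ agrees with $\int e^{iz\cdot\xi}F(\d\xi)$ in the sense of all quadratic forms against integrable functions, for some finite measure $F$. I expect this to be the main obstacle: invariant $u_0$ need not be $L^2$-continuous at time $0$. To get around it, I would replace $u_0$ by the law-identical $u(s)$ for $s>0$, which is legitimate because $u_0$ is invariant. The variance of $p_t*u(s)$ and the box averages of $u(s)$ have the same distribution as the corresponding quantities for $u_0$, since both are determined by the finite-dimensional laws and the mean-square structure. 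Lemma \ref{lem:L2_cont} gives $L^2$-continuity of $u(s)$. Hence $C$ is continuous at every point, and Bochner's theorem applies directly.

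Once the spectral measure is available, the two quantities become
\[
	\int e^{-2t\|\xi\|^2}\,F(\d\xi)\qquad\text{and}\qquad\int\prod_j\Big(\frac{\sin R\xi_j}{R\xi_j}\Big)^2F(\d\xi).
\]
By dominated convergence, each of them converges to $F(\{0\})$, as $t\to\infty$ and $R\to\infty$ respectively. Therefore both vanish in the limit if and only if $F$ has no atom at the origin, which proves the equivalence.

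The final assertion follows from the equivalence. If $u_0$ is stationary and ergodic, then $\E|u_0(0)|<\infty$, and the multiparameter ergodic theorem gives the averages in \eqref{LLN} converging almost surely and in $L^1$ to $\theta$. The convergence upgrades to $L^2$ as follows. First truncate $u_0$ at level $M$; the truncated averages converge to their limit in $L^2$ by boundedness. The truncation error, measured in $L^2$, is bounded uniformly in $R$ by $\|u_0(0)\1_{\{|u_0(0)|>M\}}\|_{L^2}$, by Minkowski's integral inequality and stationarity, and this tends to $0$ as $M\to\infty$. This establishes \eqref{LLN}, so $u_0$ is annealed by the first part.
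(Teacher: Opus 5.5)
Your proof is correct and takes essentially the paper's route. You represent the covariance through its spectral measure and show that both $\Var[(p_t*u_0)(x)]$ and the variance of the box averages converge to the spectral mass at the origin. The paper does this once for every probability density $\psi$ with $\hat\psi(\xi)\to0$ at infinity and then specializes to $p_1$ and $2^{-d}\1_{[-1,1]^d}$. You then invoke the ergodic theorem for the final claim, as the paper does.

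The detour through $u(s)$ is unnecessary: the distributional (Bochner--Schwartz) form of the spectral representation, which you state first and which the paper uses, already suffices, since only integrals of $C$ against integrable kernels enter. If you keep the detour, what it gives is a continuous covariance for $u(s)$, not continuity of $C$ itself. That covariance is translation invariant because it agrees with $C$ on $\Q^d$ (which the law on $\bS_d$ determines) and is continuous by Lemma~\ref{lem:L2_cont}.
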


\begin{proof}
	Let us suppose that $u_0\in\U0$ is a spatially weakly stationary random field,
	and define $\theta = \E [u_0(x)]$ for one, hence all, $x\in\R^d$ thanks
	to weak stationarity. According to the Bochner-Minlos-Schwartz theorem,
	there exists a finite Borel measure $\tau$ on $\R^d$ such that
	\[
		\Cov[u_0(x)\,,u_0(y)] = \int_{\R^d} \e^{i\xi\cdot (x-y)}\,\tau(\d\xi)\qquad\forall
		x,y\in\R^d.
	\]
	In other words, let $\tau$ denote  the spectral measure of $u_0$.
	Also, let $\mathscr{R}$ denote the collection of all
	probability density functions $\psi$ on $\R^d$ 
	that satisfy 
	\[
		\lim_{\|\xi\|\to\infty} \hat{\psi}(\xi) =0.
	\]
	Define
	\[
		\psi_t(x) = t^{-d/2} \psi(x/\sqrt t)\qquad\forall\psi\in\mathscr{R}, \ x\in\R^d,\ t>0.
	\]
	Then, $(\psi*u_0)(x)$ is well-defined thanks to the Fubini-Tonelli theorem (this is
	called a Bochner integral). Note that
	\begin{align*}
		\Var[ (\psi_t*u_0)(x)] &= \E\left(\left| (\psi_t * u_0)(x) - \theta \right|^2\right)\\
		&= \int_{\R^d}\d y\int_{\R^d}\d z\ \psi_t(x-y)\psi_t(x-z)\Cov[u_0(y)\,,u_0(z)  ]\\
		&=\int_{\R^d}\d y\int_{\R^d}\d z\ \psi_t(x-y)\psi_t(x-z)\int_{\R^d} \e^{i\xi\cdot(y-z)}  \,\tau(\d\xi)\\
		&= \int_{\R^d} |\hat{\psi}_t(\xi)|^2\,\tau(\d\xi) = \int_{\R^d} |\hat{\psi}(\xi\sqrt t)|^2\,\tau(\d\xi),
	\end{align*}
	for all $\psi\in\mathscr{R}$ and $t>0$, and regardless of the value of $x\in\R^d$.
	This proves that, for every $\psi\in\mathscr{R}$,
	\begin{equation}\label{psiR}
		\adjustlimits
		\lim_{t\to\infty} \sup_{x\in\R^d} \Var[ (\psi_t*u_0)(x)] 
		= \lim_{t\to\infty} \Var[ (\psi_t*u_0)(0)] = \tau\{0\}.
	\end{equation}
	
	On one hand, we can apply \eqref{psiR} with $\psi=p_1$ (the heat kernel at time $t=1$;
	see \eqref{p}) in order to deduce the following:
	\[
		u_0\text{ is annealed iff }\tau\{0\}=0.
	\]
	On the other hand,  we may apply \eqref{psiR} with
	$\psi$ replaced by $\varphi = 2^{-d}\1_{[-1,1]^d}$ also. This is because
	$\varphi\in\mathscr{R}$ also; indeed,
	\[
		\hat\varphi(\xi) = 2^{-d}\int_{[-1,1]^d}\e^{-i\xi\cdot x}\,\d x =
		\prod_{j=1}^d \frac{\sin|\xi_j|}{|\xi_j|}\quad\forall\xi\in\R^d,
	\]
	where $\sin(0)/0:=1$. Therefore, it follows that
	\[
		u_0\text{ is annealed iff }\adjustlimits
		\lim_{t\to\infty} \sup_{x\in\R^d} \Var[ (\varphi_t*u_0)(x)] = 0.
	\]
	This proves that an invariant $u_0\in\U0$ is annealed iff
	\eqref{LLN} holds since
	\[
		(\varphi_t*u_0)(x) = \frac{1}{(4t)^{d/2}} \int_{[x-\sqrt t,x+\sqrt t]^d} u_0(y)\,\d y,
	\]
	for every $t>0$ and $x\in\R^d$, and because $\E[u_0(y)]$ does not depend on 
	$y\in\R^d$ (Lemma \ref{lem:Liouville}).
	
	In order to complete the proof of the proposition, it remains to show that if
	$u_0\in\U0$ is invariant, as well as spatially stationary and ergodic, then
	$u_0$ is annealed. But this follows immediately from the already-proved
	portion of the proposition, since the mean ergodic theorem ensures that \eqref{LLN}
	holds in this case.
\end{proof}

\section{On Conjecture \ref{conj2}}\label{sec:Linear}

Before we prove Theorem \ref{th:main} we pause to
prove Conjecture \ref{conj2} in two physically interesting cases: Where
$\sigma$ is a non-zero constant; and where $\sigma$ is linear. This is accomplished
respectively in Lemmas \ref{lem:WS:const} and \ref{lem:WS:PAM} below.

In order to study the case that $\sigma$ is a non-zero constant, it suffices to assume
that $\sigma\equiv1$, for one can scale the noise otherwise.

\begin{lemma}\label{lem:WS:const}
	Suppose that $\sigma\equiv1$. 
	Let $u_0 \in \U0$ denote an invariant initial profile that is independent of $\eta$. 
	Then, $u_0$ is weakly spatially  stationary.  
\end{lemma}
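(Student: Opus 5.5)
We need to show that the mean of $u_0$ is constant and that its covariance is translation invariant. The mean part is already done: by Lemma \ref{lem:Liouville}, $\E[u_0(x)]=\theta$ for all $x\in\R^d$. For the covariance, the plan is to use invariance together with the explicit Gaussian (additive) structure of the mild solution when $\sigma\equiv1$.

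Let $u$ solve \eqref{SPDE} from $u_0$ with an independent noise $\eta$. By \eqref{mild},
\[
	u(t\,,x)=(p_t*u_0)(x)+Z(t\,,x),\qquad Z(t\,,x)=\int_{(0,t)\times\R^d}p_{t-s}(y-x)\,\eta(\d s\,\d y).
\]
Because $u_0$ and $\eta$ are independent, the covariance splits:
\[
	\Cov[u(t\,,x)\,,u(t\,,y)] = \iint p_t(x-a)p_t(y-b)\,C_0(a\,,b)\,\d a\,\d b + K_t(x-y).
\]
Here $C_0(a\,,b)=\Cov[u_0(a)\,,u_0(b)]$. The second term is
\[
	K_t(x-y)=\int_0^t(p_{s}*p_s*\Lambda)(x-y)\,\d s = \int_0^t (p_{2s}*\Lambda)(x-y)\,\d s,
\]
which depends only on $x-y$. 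The use of Fubini is justified by $u_0\in\U0$, as in the proof of Lemma \ref{lem:bdd:L2}. By invariance, the left-hand side equals $C_0(x\,,y)$ for every $t>0$.

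The key step is to send $t\to\infty$. Because $C_0$ is bounded by $\|u_0\|_{\U0}^2$, the first term is bounded uniformly. By \eqref{COND:WN} and \eqref{mathscr(E)}, $K_t(z)$ increases to
\[
	K_\infty(z)=\int_0^\infty(p_{2s}*\Lambda)(z)\,\d s,
\]
which is finite for every $z$ since $(p_{2s}*\Lambda)(z)\le(p_{2s}*\Lambda)(0)$ by positive definiteness. The difficulty is the first term. Write
\[
	G_t(x\,,y)=C_0(x\,,y)-K_t(x-y).
\]
The identity above says that applying the heat semigroup in both variables gives $P_t\otimes P_t\,C_0 = G_t$, that is, $G_t = C_0 - K_t$. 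We need to extract translation invariance from this.

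There are two routes.

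\emph{First route (semigroup relation).} Apply invariance at times $t$ and $t+s$, using the Markov property. This gives
\[
	C_0 = (P_s\otimes P_s)C_0 + K_s \qquad\text{for all } s.
\]
Iterating,
\[
	C_0=(P_{ns}\otimes P_{ns})C_0+K_{ns}.
\]
Now set $D(x\,,y)=C_0(x\,,y)-C_0(x+h\,,y+h)$. The function $K$ cancels in $D$, and the heat semigroup commutes with translations, so $D=(P_t\otimes P_t)D$ for all $t$. In other words, $D$ is a bounded function on $\R^{2d}$ that is invariant under the heat semigroup on $\R^{2d}$. The Liouville argument of Lemma \ref{lem:Liouville}, applied in dimension $2d$, then shows that $D$ is constant.

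\emph{Identifying the constant.} It remains to show that the constant is $0$. Suppose $D\equiv c$. Bounded differences are compatible with a nonzero constant, so we argue directly. Take $x=y+jh$ and iterate the relation
\[
	C_0(x\,,y)=C_0(x+h\,,y+h)+c.
\]
This gives $C_0(x+nh\,,y+nh)=C_0(x\,,y)-nc$. Since $C_0$ is bounded, this is impossible for all $n$ unless $c=0$. Hence $C_0(x\,,y)=C_0(x+h\,,y+h)$ for all $h$, which is weak stationarity.

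The main obstacle I expect is justifying the Liouville step in $\R^{2d}$ for a function $D$ that is only measurable and bounded. One must check that $D=(P_t\otimes P_t)D$ holds pointwise, not merely almost everywhere; here the pointwise identity from invariance, together with Lemma \ref{lem:L2_cont} for continuity, should suffice. Once this holds, smoothing gives that $D$ is smooth and harmonic, and the footnote argument applies.
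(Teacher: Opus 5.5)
Your proof is correct and is essentially the paper's argument. Both go through the covariance identity from invariance, the cancellation of the noise term $K_t(x-y)$ in the translated difference $D$, and Liouville's theorem for the heat semigroup on $\R^{2d}$, and both then use the iteration $C_0(x+nh\,,y+nh)=C_0(x\,,y)-nc$ against boundedness to force $c=0$ (the paper phrases this last step as $c(na)=nc(a)$). The detour through $t\to\infty$ and the Markov-property iteration are unnecessary, since the identity $C_0=(P_s\otimes P_s)C_0+K_s$ at a single time $s$ already follows directly from invariance.
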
 

\begin{remark}
	We pause to remark that, in the constant-$\sigma$ case above,
	we do not require $u_0$ to be annealed; compare with Conjecture \ref{conj2}. 
\end{remark}

\begin{proof}
	First, let us assume that
	$\Lambda$ is a function. Since $u_0$ is invariant,
	\[
		\Cov[u(t\,, x)\,, u(t\,, y)] = 
		\Cov[u_0(x)\,, u_0(y)] =:\Gamma(x\,, y)\quad\forall t\ge0,\ x,y\in\R^d.
	\]
	Thanks to the Liouville property (Lemma \ref{lem:Liouville}),
	the assumed invariance of $u_0$
	also implies that $\theta:=\E[u_0(z)]$ 
	does not depend on $z\in\R^d$. Thus,  we may deduce from \eqref{mild} that,
	for all $t>0$ and $x,y\in\R^d$,
	\begin{equation}\label{Gamma}\begin{split}
		\Gamma(x\,, y)
			&=  \int_{\R^d}\d z\int_{\R^d}\d w\ p_t(x-z) p_t(y-w) \Gamma(z\,, w)\\
		&\quad + \int_0^t \d s\int_{\R^d}\d z\int_{\R^d}\d w\  p_{s}(z) p_{s}(w)
			\Lambda(z-w- x+y). 
	\end{split}\end{equation}
	For all $a,x,y\in\R^d$ define
	\[
		\Gamma_a(x\,, y):= \Gamma(x+a\,, y+a),\qquad
		\tilde \Gamma_a(x\,, y):= \Gamma_a(x\,, y) - \Gamma(x\,, y).
	\] 
	A direct computation shows that 
	\begin{equation}\label{tilde_Gamma}
		\widetilde \Gamma_a(x\,, y) = \int_{\R^d}\d w
		\int_{\R^d}\d z\   p_t(x-z) p_t(y-w) \widetilde \Gamma_a(z\,, w),
	\end{equation} 
	for every $a,x,y\in\R^d$.
	
	Recall the definition \eqref{p} of the heat kernel on $\R^d$,
	and define the heat kernel on $\R^{2d}$ as follows:
	\[
		\mathbf{p}_t(x\,,y) : = p_t(x)p_t(y)
		\qquad\forall x,y\in\R^d.
	\]
	We may then write \eqref{tilde_Gamma} as follows:
	\[
		\widetilde{\Gamma}_a(x\,,y) = \int_{\R^{2d}} 
		\mathbf{p}_t((x\,,y) - z)\, \widetilde{\Gamma}_a(z) \, \d z
		\quad\forall a,x,y\in\R^d.
	\]
	Since $\Gamma$ is  bounded uniformly --
	see the proof of Lemma \ref{lem:Liouville} -- it follows
	that $\widetilde \Gamma_a$ is a bounded harmonic function on $\R^{2d}$,
	and hence a constant for every $a\in\R^d$. It remains to prove
	that the said constant is zero. With this aim in mind, let us define  
	\begin{equation}\label{c(a)}
		c(a):=\widetilde \Gamma_a(x\,, y)=\Gamma(x+a\,, y+a) -\Gamma(x\,, y)
		\qquad\forall a,x\,, y \in \R^d.
	 \end{equation}
	 It remains to prove that $c\equiv0$.
	 
	We may apply \eqref{c(a)} twice: Once with $x=y=0$ and
	a second time with $x=y=na$,  in order to see that
	 \[
	 	c((n+1)a) - c(na) = \Gamma(na+a\,,na+a) - \Gamma(na\,,na) = c(a),
	 \]
	 for every $n\in\Z_+$ and $a\in\R^d$. Thus we can see that
	 $c(na)=nc(a)$ for all $n\in\N$ and $a\in\R^d$, and hence
	\[
		\limsup_{n\to \infty} n |c(a)| \le 2\sup_{x\,, y \in \R^d} |\Gamma(x\,, y)|
		\le 2\sup_{x\in\R^d}\Var[u_0(x)]\le\|u_0\|_{\U0}^2<\infty,
	\]  
	thanks to \eqref{c(a)}, the Cauchy-Schwarz inequality, and Definition \ref{def:U0} of the space
	$\U0$. It follows that $c\equiv0$, equivalently that $u_0$ is weakly  spatially stationary,
	as desired. 
	
	The same fact holds in the more general case that $\Lambda$ is a measure,
	but now we interpret \eqref{Gamma}, long-hand, as a somewhat messy convolution integral
	against the measure $\Lambda$. 
\end{proof} 

\begin{lemma}\label{lem:WS:PAM}
	Suppose that $\sigma(z)=z$ for every $z\in\R$. 
	Let $u_0 \in \U0$ denote an invariant initial profile that is annealed and
	independent of $\eta$. 
	Then, $u_0$ is weakly spatially  stationary.  
\end{lemma}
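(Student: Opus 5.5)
The plan is to work with the second-moment function $G(x\,,y):=\E[u_0(x)u_0(y)]$ and show that it is invariant under diagonal shifts. Weak stationarity then follows at once, because the mean $\theta=\E[u_0(x)]$ is constant by the Liouville property (Lemma \ref{lem:Liouville}). As in the proof of Lemma \ref{lem:WS:const}, I would first assume that $\Lambda$ is a function, and only at the end rewrite the integrals as convolutions against the measure $\Lambda$.

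\emph{Step 1: a renewal-type equation for $G$.} Since $\sigma(z)=z$, the mild form \eqref{mild} and the It\^o isometry give
\[
	\E[u(t\,,x)u(t\,,y)] = \iint p_t(x-z)p_t(y-w)G(z\,,w)\,\d z\,\d w
	+ \int_0^t \d s\iint p_{t-s}(x-z)p_{t-s}(y-w)\,\E[u(s\,,z)u(s\,,w)]\,\Lambda(z-w)\,\d z\,\d w.
\]
By invariance, $u(s)$ has the law of $u_0$ for every $s\ge0$, so both $\E[u(t\,,x)u(t\,,y)]$ and $\E[u(s\,,z)u(s\,,w)]$ equal the corresponding values of $G$.

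Fix $a\in\R^d$ and set $H_a(x\,,y)=G(x+a\,,y+a)-G(x\,,y)$. Because the heat kernels and $\Lambda(z-w)$ are translation invariant, $H_a$ satisfies the same linear equation:
\[
	H_a = \mathbf{P}_t H_a + \int_0^t \mathbf{P}_{s}\big(H_a\Lambda\big)\,\d s .
\]
Here $\mathbf{P}$ denotes the heat semigroup on $\R^{2d}$, and $(H_a\Lambda)(z\,,w)=H_a(z\,,w)\Lambda(z-w)$. Moreover $H_a$ is bounded by $2\|u_0\|_{\U0}^2$, by the Cauchy--Schwarz inequality.

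\emph{Step 2: use annealing to kill the homogeneous term.} Observe that $(\mathbf{P}_tG)(x\,,y)=\E[(p_t*u_0)(x)(p_t*u_0)(y)]$ and $\E[(p_t*u_0)(x)]=\theta$. By the Cauchy--Schwarz inequality,
\[
	|(\mathbf{P}_tG)(x\,,y)-\theta^2|\le \sup_z\Var[(p_t*u_0)(z)],
\]
and the right-hand side tends to $0$ by Definition \ref{def:anneal}. Hence $\sup_{x,y}|(\mathbf{P}_tH_a)(x\,,y)|\to0$ as $t\to\infty$.

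For the integral term, the key estimate is
\[
	\iint p_s(x-z)p_s(y-w)\Lambda(z-w)\,\d z\,\d w=(p_{2s}*\Lambda)(x-y)\le (p_{2s}*\Lambda)(0).
\]
The inequality holds because $p_{2s}*\Lambda$ is the Fourier transform of the nonnegative measure $\e^{-2s\|\xi\|^2}\mu(\d\xi)$. Together with \eqref{mathscr(E)}, this shows that the integrand is dominated by $\|H_a\|_\infty(p_{2s}*\Lambda)(0)$, which is integrable on $(0\,,\infty)$. Dominated convergence then lets me send $t\to\infty$ and obtain
\[
	H_a=\int_0^\infty \mathbf{P}_s(H_a\Lambda)\,\d s.
\]

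\emph{Step 3: contraction.} Taking suprema in the last identity gives $\|H_a\|_\infty\le \mathscr{E}\,\|H_a\|_\infty$. Since $\lip_\sigma=1$, condition \eqref{COND:WN} says exactly that $\mathscr{E}<1$. Therefore $H_a\equiv0$ for every $a$. Consequently $\Cov[u_0(x)\,,u_0(y)]=G(x\,,y)-\theta^2$ depends only on $y-x$, which is weak spatial stationarity.

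The step I expect to need the most care is Step 2. There are two points to check: that annealing controls the homogeneous term uniformly in $(x\,,y)$, and that the passage $t\to\infty$ inside the time integral is justified. The latter requires writing the integral as $\int_0^t \mathbf{P}_{t-s}(\cdots)\,\d s$ and then substituting $s\mapsto t-s$, while keeping track of the measure-valued $\Lambda$. I do not expect either point to be a genuine obstruction.
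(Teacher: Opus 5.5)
Your proposal is correct and is essentially the paper's argument. You derive the renewal equation from the mild form and invariance, use annealing to kill the $\mathbf{P}_t$ term as $t\to\infty$, and close with the contraction $\|H_a\|_\infty\le\mathscr{E}\,\|H_a\|_\infty$ where $\mathscr{E}<1$; your $H_a$ is exactly the paper's $\widetilde\Gamma_a$. The only cosmetic difference is that you work with the second moment and take the diagonal-shift difference before letting $t\to\infty$, so the $\theta^2$ source term $H(t\,,x-y)$ cancels outright, whereas the paper carries it along and bounds its limit via monotone convergence.
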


\begin{proof}
	We consider the case that $\Lambda$ is a function; the general
	case follows similarly but after we interpret various integrals that follow
	as convolution integrals against the measure $\Lambda$. Since we have
	explained this several times already, we now assume without loss of much
	generality that $\Lambda$ is a function.
	
	Let $\theta = \E[u_0(x)]$ and recall that $\theta$ does not depend on 
	$x\in\R^d$ [Lemma \ref{lem:Liouville}].
	We now proceed as we did in the proof of Lemma \ref{lem:WS:const}, and
	compute $\Gamma$ as we did in \eqref{Gamma},
	but now find that $\Gamma$ satisfies the following integral equation:
	\begin{equation}\label{eq:pam:cov}\begin{aligned}
		&\Gamma(x\,, y)=H(t\,, x-y) + \iint_{\R^d\times\R^d}\d z'\d z\ p_t(x-z)p_t(y-z') 
			\Gamma(z\,, z')\\
		&\quad + \int_0^t\d s\iint_{\R^d\times\R^d}\d z'\d z\ p_s(x-z) p_s(y-z') 
			\Gamma(z\,, z') \Lambda(z-z'),
	\end{aligned}\end{equation} 
	for all $t>0$ and $x,y\in\R^d$, where 
	\[
		H(t\,, x-y):=\theta^2 \int_0^t \d s\iint_{\R^d\times\R^d}\d z'\d z\  p_s(z)p_s(z') 
		\Lambda(z-z'- (x-y)).
	\]
	Usually, one lets $t\to0$ in \eqref{eq:pam:cov} in order to show that $\Gamma$ solves
	a heat equation with a source term over $\R^{2d}$. Here, we will let $t\to\infty$
	instead.
	
	On one hand, since $u_0$ is annealed [Definition \ref{def:anneal}]
	the second term on the right-hand side of \eqref{eq:pam:cov} tends to 0 as $t \to \infty$.
	Indeed,
	\begin{align*}
		&\left| \iint_{\R^d\times\R^d}
			p_t(x-z)p_t(y-z') \Gamma(z\,, z')\,\d z'\d z\right| 
			=\left|\Cov\left[(p_t * u_0) (x)\,, (p_t*u_0)(y)\right]\right|\\
		&\hskip2.4in\le \sup_{x\in \R^d} \Var\left[ (p_t*u_0) (x) \right] \to 0 \,\, \text{as $t \to \infty$,}
	\end{align*} 
	thanks to the Cauchy-Schwarz inequality.
	On the other hand, the monotone convergence theorem ensures that
	the first term on the right-hand side of \eqref{eq:pam:cov} satisfies
	\begin{align*} 
		\lim_{t\to \infty} H(t\,, x-y) &=   
			\theta^2 \int_0^\infty \d s\iint_{\R^d\times\R^d} \d z'\d z\ 
			p_s(z)p_s(z') \Lambda(z-z'- (x-y))\\
		& \leq  \frac{\theta^2}{2} \int_{\R^d}\frac{\mu(\d z)}{\|z\|^2}   <\infty;
	\end{align*}
	see \eqref{mathscr(E)} for the final inequality.
	Because $\Gamma$ is  bounded uniformly, we are led to the following analogue of 
	\eqref{tilde_Gamma}: 
	\[ 
		\widetilde \Gamma_a (x\,, y) = \int_0^\infty\d s\iint_{\R^d\times\R^d}\d z'\d z\ 
		p_s(x-z) p_s(y-z') \widetilde \Gamma_a(z\,, z') \Lambda(z-z'),
	\]
	for the same $\widetilde\Gamma_a$ as in  \eqref{tilde_Gamma}.  
	Therefore, we can deduce from \eqref{mathscr(E)} that
	\[
		\sup_{x, y \in\R^d}  \left| \widetilde \Gamma_a (x\,, y) \right| \le
		\sup_{x, y \in\R^d}  \left| \widetilde \Gamma_a (x\,, y) \right| 
		\frac{1}{2} \int_{\R^d}\frac{\mu(\d z)}{\|z\|^2}.
	\]
	Because $\lip(\sigma)=1$ in the present
	case, the preceding and weak-noise condition \eqref{COND:WN} together imply that 
	$\widetilde\Gamma_a\equiv0$, which is another way to say that
	$u_0$ is weakly spatially stationary. This completes the proof.
\end{proof}

\section{Proof of Theorem \ref{th:main}}\label{sec:Proof}
Portions of the proof of Theorem \ref{th:main} have been 
completed already via the results and definitions that led us
to this point. We now begin to put the finishing touches on 
that proof. This is done in a few steps.

\subsection{Stability}
In this subsection we state and prove two stability results
(Lemmas \ref{lem:stability} and  \ref{lem:u-v}). Lemma \ref{lem:stability} was used in Proposition \ref{pr:duality:inv} and Lemma \ref{lem:u-v} was mentioned earlier as part of the explanation
of Definition \ref{def:anneal}. 

Let $\{u_{n,0}\}_{n>0}$ denote a sequence of initial profiles in $\U0$
for the dynamics of \eqref{SPDE}. Suppose that $\{u_{n,0}\}_{n>0}$ 
are also defined on the same underlying probability space together with a
noise $\eta$ that is independent of the $\{u_{n,0}\}_{n>0}$. Then Proposition 
\ref{pr:Dalang} ensures that the following system of SPDEs (indexed by a
real number $n>0$)
has a unique solution:
\begin{align*}\left[\begin{aligned}
	&\partial_t u_n = \Delta u_n + \sigma(u_n)\,\eta
		\quad\text{on $(0\,,\infty)\times\R^d$},\\
	&\text{subject to}\quad u_n(0)=u_{n,0}.
\end{aligned}\right.\end{align*}

The following stability result hinges on a standard coupling argument.
It might help to recall the Banach-space norms $\|\,\cdots\|_{\U0}$ from Definition
\ref{def:U0}.

\begin{lemma}\label{lem:stability}
	Suppose that there exists a random field $u_{\infty, 0}$-- independent of the noise $\eta$ as above-- such that
	\begin{equation}\label{cond:stability}
		\|u_{n,0} - u_{\infty, 0}\|_{\U0} \to
		0 \quad\text{as $n\to\infty$}.
	\end{equation}
	Then, $u_{\infty,0}\in\U0$, and 
	$\sup_{t>0}\|u_n(t) - u_\infty(t)\|_{\U0}\to0$
	as $n\to\infty$,
	where $u_\infty$ is the solution to \eqref{SPDE}, using the same
	noise $\eta$ as above, and started from the initial
	profile $u_{\infty, 0}\in\U0$.
\end{lemma}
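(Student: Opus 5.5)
First, $u_{\infty,0}\in\U0$ is immediate: joint measurability is inherited from being a (jointly measurable) random field, and
$\|u_{\infty,0}\|_{\U0}\le \|u_{n,0}\|_{\U0}+\|u_{n,0}-u_{\infty,0}\|_{\U0}<\infty$ for $n$ large. The triangle inequality in $\U0$ holds because $\|\,\cdots\|_{\U0}$ is a norm, and we need only the finiteness of this quantity.

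For the main claim we couple the solutions $u_n$ and $u_\infty$ through the common noise $\eta$ and subtract the mild formulations \eqref{mild}. Let $D_n(t\,,x)=u_n(t\,,x)-u_\infty(t\,,x)$; then
\[
	D_n(t\,,x) = \left(p_t*(u_{n,0}-u_{\infty,0})\right)(x)
	+ \int_{(0,t)\times\R^d} p_{t-s}(y-x)\left[\sigma(u_n(s\,,y))-\sigma(u_\infty(s\,,y))\right]\eta(\d s\,\d y).
\]
Assume first that $\Lambda$ is a function; the measure case is handled by the usual convolution rewriting.

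We then take second moments, exactly as in the proof of Lemma \ref{lem:bdd:L2}. The Cauchy--Schwarz inequality bounds the deterministic-convolution term by $\|u_{n,0}-u_{\infty,0}\|_{\U0}^2$. For the stochastic term, the Itô--Walsh isometry together with $\E|\Delta\sigma(y)\Delta\sigma(y')|\le\lip_\sigma^2 F_n(t)$, where
\[
	F_n(t)=\sup_{s\in(0,t]}\|D_n(s)\|_{\U0}^2
	\qquad\text{and}\qquad
	\Delta\sigma(y)=\sigma(u_n(s\,,y))-\sigma(u_\infty(s\,,y)),
\]
gives a bound of $\lip_\sigma^2 F_n(t)\,\mathscr{E}$, with $\mathscr{E}$ as in \eqref{mathscr(E)}. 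Because $\Lambda\ge 0$ is not needed and the kernel $p_{t-s}(y-x)p_{t-s}(y'-x)\Lambda(y-y')$ integrates to at most $\int_0^\infty(p_{2s}*\Lambda)(0)\,\d s=\mathscr{E}$ once absolute values are taken (in the measure case one uses positive-definiteness and the Fourier representation to obtain the same bound), we arrive at
\[
	\E|D_n(t\,,x)|^2 \le \|u_{n,0}-u_{\infty,0}\|_{\U0}^2 + \lip_\sigma^2\,\mathscr{E}\,F_n(t).
\]
Here we need cross terms of the $\E[\cdot]$ type bounded by Cauchy--Schwarz, uniformly in $y,y'$.

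The right-hand side is nondecreasing in $t$, so taking the supremum over $s\le t$ and $x$ yields $F_n(t)\le \|u_{n,0}-u_{\infty,0}\|_{\U0}^2+\lip_\sigma^2\mathscr{E}F_n(t)$. Since $F_n(t)<\infty$ by Proposition \ref{pr:Dalang}(a), applied to both solutions, and since $\lip_\sigma^2\mathscr{E}<1$ by \eqref{COND:WN}, we can rearrange:
\[
	\sup_{t>0}\|u_n(t)-u_\infty(t)\|_{\U0}^2\le \frac{\|u_{n,0}-u_{\infty,0}\|_{\U0}^2}{1-\lip_\sigma^2\mathscr{E}}\to0 .
\]
The main point requiring care is this absorption step. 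It needs the finiteness of $F_n(t)$ for each fixed $t$, which is what makes the rearrangement legitimate, and the bound must be uniform in $t$. Both are supplied by Proposition \ref{pr:Dalang} and by the fact that $\mathscr{E}$ is a time integral over all of $(0\,,\infty)$, which is finite precisely under the weak-noise condition \eqref{COND:WN}.
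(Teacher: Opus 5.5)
Your proof is correct and is essentially the paper's argument: mild form, It\^o--Walsh isometry, the Lipschitz bound, and absorption via $\lip_\sigma^2\mathscr{E}<1$. The only difference is that you compare $u_n$ directly with $u_\infty$ rather than proving $\{u_n(t)\}$ Cauchy uniformly in $t$. That is slightly cleaner, because Proposition \ref{pr:Dalang} then gives you $u_\infty$ and the joint measurability of $u_\infty(t)$, with no limit to identify.

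Two small corrections:
\begin{enumerate}
\item Bounding the stochastic term by $\lip_\sigma^2\mathscr{E}F_n(t)$ uses exactly that $\Lambda\ge0$, which holds here because $\Lambda$ is a Borel measure. With a signed $\Lambda$, taking absolute values produces $|\Lambda|$. Positive-definiteness alone does not let you pull the non-constant weight $\E[\Delta\sigma(y)\Delta\sigma(y')]$ out of the integral. So drop the remark that nonnegativity is not needed.
\item Unless ``random field'' already includes joint measurability, $u_{\infty,0}\in\U0$ needs a jointly measurable modification. One choice is $\limsup_k u_{n_k,0}$ along a subsequence with $\|u_{n_k,0}-u_{\infty,0}\|_{\U0}\le 2^{-k}$, which converges a.s.\ at each fixed $x$ by Chebyshev and Borel--Cantelli.
\end{enumerate}
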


\begin{proof}
	Condition \eqref{cond:stability} can be recast as
	\[
		\lim_{n,m\to\infty}
		\| u_{n,0} -u_{m,0} \|_{\U0}=0.
	\]
	The remainder of this lemma is a standard stability result,
	except that local-in-time estimates (``$\sup_{t\in(0,T]}$'' where $T>0$ is an arbitrary finite number) 
	replace the more common global-in-time ones (``$\sup_{t>0}$''). Local-in-time
	stability properties of SPDEs are well known, particularly
	when the initial profiles are non random.
	We use the same stability proof
	but perform the required bookkeeping in order
	to show that the particular hypothesis \eqref{COND:WN} implies
	that we in fact have stability globally in time.
	
	To be sure, we can see from \eqref{mild} that, when $\Lambda$ is a function,
	\begin{align*}
		&\E\left( |u_n(t\,,x) - u_m(t\,,x)|^2\right)\le
			\left\| u_{n,0} - u_{m,0}\right\|_{\U0}^2 + \\
		&\quad+\int_0^t\d s\iint_{\R^d\times\R^d}\d y\,\d z\
			p_{t-s}(y-x)p_{t-s}(z-x)\mathscr{B}_{s,n,m}(y\,,z)\Lambda(y-z),
	\end{align*}
	where 
	\begin{align*}
		&\mathscr{B}_{s,n,m}(y\,,z) =\E\left\{ \left( \sigma(u_n(s\,,y)) 
			- \sigma(u_m(s\,,y))\right)\left( \sigma(u_n(s\,,y)) - 
			\sigma(u_m(s\,,y))\right) \right\}\\
		&\le \lip_\sigma^2\,\sup_{y\in \R^d} \E\left( |u_n(s\,,y) - u_m(s\,,y)|^2\right)
			=\lip_\sigma^2\|u_n(s)-u_m(s)\|_{\U0}^2.
	\end{align*} 
	It follows readily from this that
	\[
		\mathscr{E}_{n,m} = \sup_{s>0}\|u_n(s)-u_m(s)\|_{\U0}^2
	\]
	satisfies the following for all $n,m>0$ when $\Lambda$ is a function:
	\begin{align*}
		\mathscr{E}_{n,m} &\le \| u_{n,0} - u_{m,0}\|_{\U0}^2\\
			&\quad + \mathscr{E}_{n,m}\lip_\sigma^2 \int_0^\infty\d s\iint_{\R^d\times\R^d}
			\d y\,\d z\ p_s(y-x)p_s(z-x)\Lambda(y-z)\\
		&= \| u_{n,0} - u_{m,0}\|_{\U0}^2 +
			\frac{\mathscr{E}_{n,m}\lip_\sigma^2}{2} \int_{\R^d}\frac{\mu(\d\xi)}{\|\xi\|^2},
	\end{align*}
	by \eqref{mathscr(E)}.
	The final assertion is true in the more general case that $\Lambda$ is a measure,
	but the multiple integral in the second line above needs to be written as a
	covolution integral against the measure
	$\Lambda$ in that more general setting. In conclusion, 
	we solve for $\mathscr{E}_{n,m}$ in order to deduce
	from \eqref{COND:WN}
	that, for all $n,m>0$,
	\[
		\sup_{s>0}\|u_n(s)-u_m(s)\|_{\U0}^2
		\le\left[1 - \frac{\lip_\sigma^2}{2}
		\int_{\R^d}\frac{\mu(\d\xi)}{\|\xi\|^2} \right]^{-1}
		\| u_{n,0} - u_{m,0}\|_{\U0}^2.
	\]
	Thus, the lemma follows from \eqref{cond:stability} provided that we can prove
	that $\Omega \times\R^d\ni(\omega\,,x)\mapsto u_\infty(t\,,x\,,\omega)$ 
	is measurable for every fixed $t>0$. That is done in a manner 
	similar to the way we proved that
	$u_{\infty, 0}$ has a jointly measurable version. We skip the remaining details.
\end{proof}

Next we conclude this section with another, this time asymptotic, 
stability result. A variation of the following can be found
in Chen et al \cite[Theorem 5.14]{ChenOuyangTindelXia2024}.

\begin{lemma}\label{lem:u-v}
	Let $u$ and $v$ respectively solve \eqref{SPDE} starting from 
	$u_0,v_0\in\U0$, both independent of the same copy of the
	noise $\eta$. Suppose, in addition, that 
	$\|p_t*(u_0-v_0)\|_{\U0}\to0$ as $t\to\infty$.
	Then,  $\| u(t) - v(t)\|_{\U0}\to0$
	as $t\to\infty$.
\end{lemma}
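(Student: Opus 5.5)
Set $w=u-v$ and $w_0=u_0-v_0$. Since $u$ and $v$ are driven by the same noise $\eta$, the mild formulation \eqref{mild} gives
\[
	w(t\,,x) = (p_t*w_0)(x) + \int_{(0,t)\times\R^d} p_{t-s}(y-x)\,\bigl[\sigma(u(s\,,y))-\sigma(v(s\,,y))\bigr]\,\eta(\d s\,\d y).
\]
Let $\Phi(t)=\|w(t)\|_{\U0}^2$ and $a(t)=\|p_t*w_0\|_{\U0}^2$, so that $a(t)\to0$ by hypothesis. We have $\sup_t\Phi(t)<\infty$ by Lemma \ref{lem:bdd:L2}. The aim is a renewal-type inequality for $\Phi$.

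First assume $\Lambda$ is a function; the measure case is obtained, as elsewhere in the paper, by writing the integrals as convolutions against $\Lambda$. Since the cross term between the deterministic part and the stochastic integral has zero mean, the $L^2$ isometry and the Lipschitz bound (as in the proof of Lemma \ref{lem:stability}, with Cauchy--Schwarz on the covariance) give
\[
	\Phi(t) \le a(t) + \lip_\sigma^2\int_0^t k(t-s)\,\Phi(s)\,\d s,
	\qquad k(r) = (p_{2r}*\Lambda)(0)\ge0.
\]
By \eqref{mathscr(E)}, $\int_0^\infty k(r)\,\d r=\mathscr{E}$, and \eqref{COND:WN} says precisely that $\lambda:=\lip_\sigma^2\mathscr{E}<1$.

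Next take $L=\limsup_{t\to\infty}\Phi(t)$, which is finite. Fix $\varepsilon>0$ and choose $T$ so large that $\Phi(s)\le L+\varepsilon$ for $s\ge T$. Split the integral into $s\in(0\,,T)$ and $s\in[T\,,t)$. The first piece is at most
\[
	\lip_\sigma^2\,\sup_r\Phi(r)\int_{t-T}^{t}k(r)\,\d r,
\]
which tends to $0$ as $t\to\infty$ because $k$ is integrable on $(0\,,\infty)$ and the window $(t-T\,,t)$ escapes to infinity. The second piece is at most $\lambda(L+\varepsilon)$. Letting $t\to\infty$ then yields $L\le 0+0+\lambda(L+\varepsilon)$, and since $\varepsilon>0$ is arbitrary, $L\le\lambda L$. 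As $\lambda<1$, this forces $L=0$, which is the claim.

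The only genuinely delicate point is the measure-valued $\Lambda$ and the legitimacy of the bound
\[
	\E\bigl|\sigma(u)-\sigma(v)\bigr|(y)\cdot\bigl|\sigma(u)-\sigma(v)\bigr|(y') \le \lip_\sigma^2\,\Phi(s)
\]
inside a possibly signed convolution against $\Lambda$. The form of \eqref{COND:WN} and the identities in \eqref{mathscr(E)} assume that the positive-definite $\Lambda$ yields a nonnegative kernel after integration against products of heat kernels. I would handle this exactly as the paper does in Lemma \ref{lem:stability}, bounding via the nonnegative quantity $\<p_s,p_s*\Lambda\>$. Beyond that, the integrability-tail argument for the first piece is the main step, and it is routine.
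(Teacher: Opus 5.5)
Your proposal is correct and is essentially the paper's argument: the same renewal inequality $\Phi(t)\le a(t)+\lip_\sigma^2\int_0^t k(t-s)\Phi(s)\,\d s$ with $\int_0^\infty k(r)\,\d r=\mathscr{E}$, followed by a $\limsup$ argument in which your explicit split of the time integral at $T$ simply makes rigorous the (reverse) Fatou step that the paper invokes using the boundedness of $\Phi$. The delicacy you flag about a signed $\Lambda$ does not arise, since $\Lambda$ is a nonnegative, positive-definite Borel measure in this setting, so the Lipschitz bound can be inserted directly under a nonnegative kernel.
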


\begin{proof}
	First we consider the case where $\Lambda$ is a function. 
	By \eqref{mild}, for every $t>0$ and $x\in\R^d$:
	\begin{equation}\label{u-v}\begin{split}
		&\E\left( \left| u(t\,,x) - v(t\,,x)\right|^2\right)
			= \E\left( |p_t*(u_0-v_0)(x)|^2\right)+\\
		&\ + \int_0^t\d s\iint_{\R^d\times\R^d}
			\d y\,\d z\ p_{t-s}(y-x)p_{t-s}(z-x)\mathscr{B}_s(y\,,z)
			\Lambda(y-z),
	\end{split}\end{equation}
	where $\mathscr{B}_s(y\,,z) = \E[A_s(y)A_s(z)]$ for
	$A_s(y) = \sigma( u(s\,,y) ) - \sigma( v(s\,,y))$,
	$s>0$, and $y,z\in\R^d$. In order to simplify the typography,
	let us define $\phi(t)= \| u(t) - v(t)\|_{\U0}^2$
	for every $t>0$.
	We may then maximize \eqref{u-v} over $x\in\R^d$ in order
	to find that, for all $t>0$,
	\[
		\phi(t) \le  \|p_t*(u_0-v_0)\|_{\U0}^2
		+\lip_{\sigma}^2\int_0^t \phi(t-s)
		\< p_s\,,p_s*\Lambda\>_{L^2(\R^d)}\,\d s.
	\]
	This preceding discussion remains valid in the more general case
	that $\Lambda$ is a measure, but we have to write 
	the convolution long-hand in the first display of the proof
	in that case. 
	
	Recall that the weak-noise condition
	\eqref{COND:WN} is assumed throughout. If in addition $\lip_\sigma>0$
	then it follows in particular that
	$\int_0^\infty \< p_s\,,p_s*\Lambda\>_{L^2(\R^d)}\,\d s<\infty$;
	see \eqref{mathscr(E)}.  Moreover, thanks to Lemma \ref{lem:bdd:L2},
	$\phi$ is bounded uniformly.
	Therefore, Fatou's lemma implies that
	the finite number $K=\limsup_{t\to\infty}\phi(t)$ satisfies
	\begin{align*}
		K &\le  \lim_{t\to\infty}\|p_t*(u_0-v_0)\|_{\U0}
			+ K\lip_{\sigma}^2\int_0^\infty
			\< p_s\,,p_s*\Lambda\>_{L^2(\R^d)}\,\d s\\
		&= \frac{K\lip_{\sigma}^2}{2}\int_{\R^d}\frac{\mu(\d\xi)}{\|\xi\|^2}.
	\end{align*}
	On one hand, if $\lip_\sigma=0$ then $K=0$ tautologically. On the other
	hand, if $\lip_\sigma>0$, then \eqref{COND:WN} implies that $K=0$.
	In other words, $K$ is always zero, which
	is another way to state the announced result.
\end{proof} 

\subsection{Proof of Theorem \ref{th:main}(a)}
Choose and fix some $\theta\in\R$.
Recall from \eqref{v:theta}
that the solution to \eqref{SPDE}, started identically at $\theta$,
is denoted by $V^{(\theta)}$. Therefore, as was pointed
out earlier,
\eqref{V:X_theta} and Propositions
\ref{pr:duality} and \ref{pr:duality:inv} together imply that the law $\nu_\theta$ of $X_\theta$
is invariant for \eqref{SPDE}. Next we prove that $\nu_\theta$ is annealed.
The proof requires the following.

\begin{lemma}\label{lem:decorr}
	Suppose that $u_0\in\U0$ and $x\mapsto\E[u_0(x)]$ is constant, and let
	$\theta=\E[u_0(x)]$. Then, the solution $u$ to \eqref{SPDE} started
	from initial profile $u_0$, with noise $\eta$ independent of $u_0$, satisfies
	the following, as $t\to\infty$:
	\[\adjustlimits
		\sup_{s>0}\sup_{x\in\R^d}\E\left( | (p_t*u(s))(x) - \theta|^2\right) \le
		\sup_{x\in\R^d}\E\left( | (p_t*u_0)(x) - \theta|^2\right)  + \mathscr{o}(1).
	\]
\end{lemma}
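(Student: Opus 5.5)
Apply $p_t*$ to the mild formulation \eqref{mild} at time $s$ and use the semigroup property $p_t*p_s=p_{t+s}$. Since $\int p_t(x-z)p_{s-r}(y-z)\,\d z=p_{t+s-r}(y-x)$, a stochastic Fubini argument gives
\[
	(p_t*u(s))(x)-\theta = (p_{t+s}*u_0)(x)-\theta + \int_{(0,s)\times\R^d} p_{t+s-r}(y-x)\,\sigma(u(r\,,y))\,\eta(\d r\,\d y).
\]
This is justified by the $L^2$ bounds of Lemma \ref{lem:bdd:L2} together with the integrability coming from \eqref{COND:WN}. The initial profile $u_0$ is independent of $\eta$, and the stochastic integral has mean zero conditionally on $u_0$. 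Hence the two terms are orthogonal in $L^2(\Omega)$, and $\E(|(p_t*u(s))(x)-\theta|^2)$ is the sum of their second moments.

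\textbf{The deterministic-in-$\eta$ term.} Because $\E[u_0(y)]=\theta$ for all $y$, we can write $(p_{t+s}*u_0)(x)-\theta=\bigl(p_s*[(p_t*u_0)-\theta]\bigr)(x)$. Jensen's (or Minkowski's) inequality then gives
\[
	\E\bigl(|(p_{t+s}*u_0)(x)-\theta|^2\bigr)\le\sup_{z\in\R^d}\E\bigl(|(p_t*u_0)(z)-\theta|^2\bigr).
\]
This bound is uniform in $s>0$ and $x\in\R^d$, and it is exactly the first term on the right-hand side of the lemma.

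\textbf{The stochastic term.} By the Itô--Walsh isometry (written out for $\Lambda$ a function, with the general case handled by the usual convolution against $\Lambda$), its second moment equals
\[
	\int_0^s\d r\iint p_{t+s-r}(y-x)p_{t+s-r}(z-x)\,\E[\sigma(u(r\,,y))\sigma(u(r\,,z))]\,\Lambda(y-z)\,\d y\,\d z.
\]
Lemma \ref{lem:bdd:L2} and the Lipschitz property of $\sigma$ make the expectation bounded by a constant $\mathbf C$ that does not depend on $r,y,z$; this uses nonnegativity of $\Lambda$, or handles signs as in the earlier proofs. After the change of variables $q=s-r$, the whole expression is at most
\[
	\mathbf C\int_0^\infty \bigl(p_{2(t+q)}*\Lambda\bigr)(0)\,\d q=\mathbf C\int_t^\infty \bigl(p_{2q}*\Lambda\bigr)(0)\,\d q.
\]
This bound is uniform in $s$ and $x$. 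It tends to $0$ as $t\to\infty$ because $\int_0^\infty (p_{2q}*\Lambda)(0)\,\d q=\mathscr E<\infty$ by \eqref{mathscr(E)} and \eqref{COND:WN}. This is the $\mathscr o(1)$ term.

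\textbf{Expected obstacle.} The main technical point is justifying the stochastic Fubini interchange and the positivity or measure-valued bookkeeping for $\Lambda$. The former is routine given Dalang's integrability; the latter is handled by bounding $\E[\sigma\sigma]$ in absolute value and using that $\Lambda$ is a nonnegative-definite tempered measure, exactly as in the proofs of Lemmas \ref{lem:bdd:L2} and \ref{lem:L2_cont}. Adding the two bounds and taking the supremum over $s>0$ and $x\in\R^d$ completes the proof.
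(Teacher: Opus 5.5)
Your proposal is correct and follows essentially the paper's argument. Both split $\Var[(p_t*u(s))(x)]$ into two pieces: an initial-data part, bounded by $\sup_{z}\Var[(p_t*u_0)(z)]$, and a noise part, bounded uniformly in $s$ and $x$ by a constant times $\int_t^\infty (p_{2q}*\Lambda)(0)\,\d q=\frac12\int_{\R^d}\e^{-2t\|\xi\|^2}\|\xi\|^{-2}\,\mu(\d\xi)$. The only difference is that the paper computes at the level of the covariance $\sR_s(y\,,z)$, so it needs only ordinary Fubini, instead of applying stochastic Fubini to $p_t*u(s)$.

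Your rewriting $(p_{t+s}*u_0)-\theta=p_s*[(p_t*u_0)-\theta]$ followed by Minkowski is in fact the right treatment of the initial-data term. In the paper's display for $I_2$ the covariance is written with $p_t*u_0$ where $p_s*u_0$ belongs; that term is really $\Var[(p_{t+s}*u_0)(x)]$, which your argument bounds correctly.
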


\begin{proof}[Proof of Lemma \ref{lem:decorr}]
	We find it helpful to introduce the function
	\begin{equation}\label{R}
		\sR_t(y\,,z) := \Cov[ u(t\,,y)\,,u(t\,,z)] - \Cov[ (p_t*u_0)(y)\,,(p_t*u_0)(z)],
	\end{equation}
	for all $t>0$ and $y,z\in\R^d$. This notation will be used throughout the proof.
	
	Let us first consider the case that $\Lambda$ is a function.
	Thanks to \eqref{mild} and the independence of $u_0$ and the noise $\eta$,
	we first condition on $u_0$ and then take expectations in order to
	arrive at the pointwise identity,
	\[
		\sR_t(y\,,z)
		=\int_0^t\d s\iint_{\R^d\times\R^d}
		\d v\,\d w\ p_{t-s}(v-y)p_{t-s}(w-z)\mathscr{B}_s(v\,,w)\Lambda(v-w),
	\]
	where $\mathscr{B}_s(v\,,w) = \E[\sigma(u(s\,,v))\sigma(u(s\,,w))].$ Lemma
	\ref{lem:bdd:L2} ensures that $\mathscr{B}_s(v\,,w)$ is bounded uniformly in
	$s>0$ and $v,w\in\R^d$ (see \eqref{eq:sigma_bound}). Therefore,
	\begin{equation} \label{conv_p_p_L}
        \begin{split}
		|\sR_t(y\,,z)| &\lesssim
			\int_0^t\d s\iint_{\R^d\times\R^d}
			\d v\,\d w\ p_s(v-z)p_s(w-y)\Lambda(v-w)\\
		&=\int_0^t (p_{2s}*\Lambda)(z-y)\,\d s,
        \end{split}
	\end{equation}
	uniformly for all $t>0$ and $y,z\in\R^d$. 	
The final line in \eqref{conv_p_p_L} follows because convolution 
is commutative.

	In the more
	general case that $\Lambda$ is a measure, we write the first line, long hand,
	as a convolution against $\Lambda$ in order to see that
	\begin{equation}\label{R_t}
		|\sR_t(y\,,z)|\lesssim\int_0^\infty (p_{2s}*\Lambda)(z-y)\,\d s,
	\end{equation}
	uniformly for all $t>0$ and $y,z\in\R^d$ regardless of whether or not
	$\Lambda$ is a function.  
	
	We can apply \eqref{mild}, first conditionally on $u_0$ and then
	take expectations, in order to see that
	$\E(|(p_t*u(s))(x)-\theta|^2)=\Var[(p_t*u(s))(x)]$ for every $t>0$, $s\ge0$,
	and $x\in\R^d$. Therefore, we write, for every $t,s>0$ and $x\in\R^d$,
	\begin{align*}
		&\Var[(p_t*u(s))(x)] 
			= \E\left(\left| \int_{\R^d} p_t(x-y)
			\left[u(s\,,y) - \theta\right] \d y \right|^2\right)\\
		&=  \iint_{\R^d\times\R^d}
			p_t(x-y)p_t(x-z)\Cov[u(s\,,y)\,,u(s\,,z)]
			\d y\,\d z\\
		&=\iint_{\R^d\times\R^d}
			p_t(x-y)p_t(x-z) \sR_s(y\,,z) \,\d y\,\d z +\\
		&\quad+\iint_{\R^d\times\R^d}
			p_t(x-y)p_t(x-z)\Cov[(p_t*u_0)(y)\,,(p_t*u_0)(z)]
			\d y\,\d z\\
		&=: I_1 + I_2.
	\end{align*}
	We analyse $I_1$ and $I_2$ separately and in turn. Thanks
	to \eqref{R_t}, 
	\begin{align*}
		I_1&\lesssim\iint_{\R^d\times\R^d}
			p_t(x-y)p_t(x-z) \d y\,\d z\int_0^\infty\d s\ (p_{2s}*\Lambda)(y-z)\\
		&=\int_0^\infty (p_{2(t+s)}*\Lambda)(0)\,\d s = \int_0^\infty \d s \int_{\R^d}
			\e^{-2(t+s)\|\xi\|^2}\,\mu(\d\xi)\\
		&=  \frac12 \int_{\R^d} \e^{-2t\|\xi\|^2}\,\frac{\mu(\d\xi)}{\|\xi\|^2} .
	\end{align*}
	As regards $I_2$, we note first that,
	by the Cauchy-Schwarz inequality, the  covariance term in $I_2$
	can be bounded from above by 
	$\sup_{y\in\R^d}\Var[(p_t*u_0)(y)]$, whence
	$I_2\le\sup_{w\in\R^d}\Var[(p_t*u_0)(w)].$
	This effort yields the inequality,
	\[
		\adjustlimits\sup_{s>0}\sup_{w\in\R^d}\Var[(p_t*u(s))(w)] \lesssim
		\sup_{w\in\R^d}\Var[(p_t*u_0)(w)]+ 
		 \int_{\R^d} \e^{-2t\|\xi\|^2}\,\frac{\mu(\d\xi)}{\|\xi\|^2},
	\]
	which implies Lemma \ref{lem:decorr}, thanks to \eqref{COND:WN}
	and the bounded convergence theorem of integration theory.
\end{proof}

We can now return to the proof of Theorem \ref{th:main}(a) and verify that
$X_\theta$ -- equivalently its law $\nu_\theta$ -- is annealed. Recall once
again the random field $V^{(\theta)}$ from \eqref{v:theta}. According
to Lemma \ref{lem:decorr}, 
\begin{align*}
	&\adjustlimits\sup_{s>0}\sup_{x\in\R^d}
		\E\left( \left| (p_t*\cev{V}_s^{(\theta)}(s))(x) - \theta\right|^2\right)
		=\adjustlimits\sup_{s>0}\sup_{x\in\R^d}
		\E\left( \left| (p_t*V^{(\theta)}(s))(x) - \theta\right|^2\right)\\
	&\le \sup_{x\in\R^d}\E\left( \left| (p_t*V^{(\theta)}(0))(x) - \theta\right|^2\right)
		+\mathscr{o}(1)=\mathscr{o}(1)
		\qquad\text{as $t\to\infty$},
\end{align*}
since $V^{(\theta)}(0\,,x)=\theta$ -- hence
$(p_t*V^{(\theta)}(0))(x)=\theta$ --  for all $s,t>0$ and $x\in\R^d$.
At the same time, a few back-to-back appeals to Minkowski's inequality ensure 
that for 
all $s,t>0$ and $x\in\R^d$,
\begin{align*}
	&\left| \E\left( \left| (p_t*X_\theta)(x) - \theta\right|^2\right)^{1/2}
		-\E\left( \left| (p_t*\cev{V}_s^{(\theta)}(s))(x) - 
		\theta\right|^2\right)^{1/2}\right|\\
	&\le \E\left( \left| \left(p_t*\left[ \cev{V}_s^{(\theta)}(s)-X_\theta\right]\right)(x)
   		\right|^2\right)^{1/2} \le\|\cev{V}_s^{(\theta)}(s)-X_\theta\|_{\U0},
\end{align*}
and the final quantity tends to zero when $s\to\infty$; see \eqref{X_theta}.
The preceding two displays together yield
\[
	\sup_{x\in\R^d}\E\left( \left| (p_t*X_\theta)(x) - \theta\right|^2\right)
	\to0\qquad\text{as $t\to\infty$},
\]
which is another way to say that $X_\theta$ -- equivalently, $\nu_\theta$ --
is annealed, since we have already seen that
$x\mapsto \E [X_\theta(x)] =\theta$ is the mean function of $\nu_\theta$. 
It therefore remains to establish the ergodicity of  $\nu_\theta$.

Consider the SPDE \eqref{v:theta}, started from $X_\theta$ and an independent
copy of noise which we denote by $\eta$. Let the resulting solution denoted
by $u$. By the already-proved invariance of $X_\theta$, the infinite-dimensional
stochastic process $\{u(t)\}_{t\ge0}$ is stationary. 

Therefore,
the ergodic theorem ensures that for every bounded and Lipschitz continuous functions $f:\bS_d \to\R$
\[
	\lim_{t\to\infty} \frac{1}{t} \int_0^t f(u(s))\,\d s
	= \E[ f(X_\theta)\mid\sI],
\]
where $\sI$ denote the invariant $\sigma$-algebra for the dynamics of \eqref{SPDE}
driven by $(X_\theta\,,\eta)$. 
Since bounded and Lipschitz continuous functions $f:\bS_d \to\R$  form a determining class on $\bS_d$,
it remains to prove that the limiting random variable
$\E[f(X_\theta)\mid\sI]$ is a constant a.s.~for every bounded and Lipschitz continuous  $f:\bS_d \to\R$.

Let us fix a bounded and Lipschitz function $f:\bS^d \to \R$. It suffices to prove that
\[
	\frac{1}{t} \int_0^t f(u(s))\,\d s \xrightarrow{L^1(\Omega)} 
	\E f(X_\theta)\qquad\text{as $t\to\infty$}.
\]
With this goal in mind, let us first observe that, because
$\E [u_0(x)]=\E [X_\theta(x)]=\theta$
for every $x\in\R^d$,
\[
	\| p_t*(u(0))-V^{(\theta)}(0)\|_{\U0}^2 = 
	\sup_{x\in\R^d}\Var[(p_t*u_0)(x)]\to0
	\quad\text{as $t\to\infty$}.
\]
Therefore, the asymptotic stability Lemma \ref{lem:u-v} yields 
$\|u(s)-V^{(\theta)}(s)\|_{\U0}\to0$ as $s\to\infty$ and hence
\begin{equation}\label{f(u)-f(V)}
	\left( \frac{1}{t} \int_0^t f(u(s))\,\d s
	- \frac{1}{t} \int_0^t f(V^{(\theta)}(s))\,\d s \right)\xrightarrow{ L^1(\Omega)}
	\quad\text{as $t\to\infty$},
\end{equation}
thanks to the bounded convergence theorem.
Because $\| V^{(\theta)}(s)-X_\theta\|_{\U0}\to0$
as $s\to\infty$, the bounded convergence theorem of integration
theory implies
that $\|f(V^{(\theta)}(s))-f(X_\theta)\|_{\U0}\to0$ as $s\to\infty$
as well. Therefore, \eqref{f(u)-f(V)} and the Borel-Cantelli argument 
together imply that 
there exists an unbounded, non-random sequence $0<t(1)<t(2)<\cdots$ such that
\begin{equation}\label{t(n)}\begin{split}
	\lim_{n\to\infty} \frac{1}{t(n)} \int_0^{t(n)} f(u(s))\,\d s
		&= \lim_{n\to\infty} \frac{1}{t(n)} \int_0^{t(n)} f(V^{(\theta)}(s))\,\d s\\
	&= \E[ f(X_\theta)\mid\sI]\quad\text{a.s.}
\end{split}\end{equation}
We might expect the right-most quantity to be a constant
because we expect the other two limit quantities in \eqref{t(n)} 
to belong to the tail $\sigma$-algebra of $\{\eta(t)\}_{t>0}$ --
see \eqref{v:theta} -- which in turn
coincides with the tail $\sigma$-algebra of an infinite-dimensional Brownian motion.
And the latter must be trivial thanks to a suitable version of the Kolmogorov's 0-1 law. 
This turns out to be a somewhat subtle issue, and appears to be true
thanks in addition to Condition \eqref{COND:WN}. We use coupling in order 
to rigorize a variation of the preceding argument.
	
Let $\zeta$ be an independent copy of $\eta$. 
Then, choose and fix a non-random and arbitrary number $T>0$, and define 
$\bar{\eta}(t)=\zeta(t)$ for all $t\in[0\,,T]$ and
$\bar{\eta}(t)=\eta(t)$ for $t>T$. More precisely,
we define, for every non-random $\varphi\in L^2(\R_+\times\R^d)$,
\[
	\int_{\R_+\times\R^d} \varphi\,\d\bar{\eta}
	:=\int_{(0,T]\times\R^d} \varphi\,\d\zeta
	+ \int_{(T,\infty)\times\R^d} \varphi\,\d\eta,
\]
viewed as an identity for Wiener integrals. This defines the noise
$\bar\eta$ as a Gaussian generalized random function with the same
law as $\eta$. Let $\bar{V}^{(\theta)}$ denote the solution
to the SPDE \eqref{v:theta}, except use $\bar\eta$ as the driving
noise in place of $\eta$. It then follows that $\bar{V}^{(\theta)}$
has the same law as $V^{(\theta)}$, but is independent of 
the $\sigma$-algebra $\sF_T$ that is
generated by all random variables of the form
\[
	B_t(\psi) := \int_{(0,t)\times\R^d} \psi(y)\,\eta(\d s\,\d y)\qquad
	\forall \psi\in L^2(\R^d)\,, \, \forall t\in (0, T].
\]
	
We plan to prove that
\begin{equation}\label{goal:theta}
	\lim_{t\to\infty}
	\left\| \bar{V}^{(\theta)}(t) - V^{(\theta)}(t) \right\|_{\U0}=0.
\end{equation}
This will complete the proof. In order to see why, let us first note that
the expectation in \eqref{goal:theta} is bounded uniformly in 
$(t\,,x)\in(0\,,\infty)\times\R^d$;
see Lemma \ref{lem:bdd:L2}.
It therefore
follows from Lemma \ref{lem:bdd:L2}, \eqref{t(n)}, and \eqref{goal:theta} 
that there exists a non-random
subsequence $\{t(n_k)\}_{k=1}^\infty$ of $\{t(n)\}_{n=1}^\infty$
such that
\[
	\lim_{k\to\infty}\frac{1}{t(n_k)} \int_0^{t(n_k)} f(\bar{V}^{(\theta)}(s))\,\d s
	= \E[ f(X_\theta)\mid\sI]\quad\text{a.s.}
\]
It follows from this that $\E[ f(X_\theta)\mid\sI]$ is independent $\sF_T$.
This is because $\bar{V}^{(\theta)}$, and hence the left-hand side of the above identity,
is independent of $\sF_T$.
Since $T>0$ could be chosen to be as large as we want,
this proves that $\E[ f(X_\theta)\mid\sI]$ is independent of $\vee_{s>0}\sF_s$.
At the same time, \eqref{t(n)} and Proposition \ref{pr:Dalang} together imply
that $\E[ f(X_\theta)\mid\sI]$ is measurable with respect to 
$\vee_{s>0}\sF_s$. It follows that $\E[ f(X_\theta)\mid\sI]$ is independent of
itself and hence is a constant. Thus,
it remains to prove \eqref{goal:theta}, as we mentioned earlier. We will do that in the case that
$\Lambda$ is a function. The general case that $\Lambda$ is a measure
is carried out by making adjustments of the type that we have made multiple
times in earlier proofs up to here.
	
Thanks to \eqref{v:theta}, for all $t> T$ and $x\in\R^d$,
\begin{align}\nonumber
	&\E\left(\left| \bar{V}^{(\theta)}(t\,,x) - V^{(\theta)}(t\,,x) \right|^2\right)
		\\\nonumber
	&=2\int_0^T\d s\iint_{\R^d\times\R^d}\d y\,\d z\
		p_{t-s}(y-x)p_{t-s}(z-x) 
		\Lambda(y-z)\E[ A_s(y)A_s(z)]\\
	&\quad+\int_T^t\d s\iint_{\R^d\times\R^d}\d y\,\d z\
		p_{t-s}(y-x)p_{t-s}(z-x) 
		\Lambda(y-z)\mathscr{B}_s(y\,,z),
		\label{ABC}
\end{align}
where
\[
	\mathscr{B}_s(y\,,z) = \E[ ( A_s(y)-\bar{A}_s(y) )
	\cdot (A_s(z)-\bar{A}_s(z))],\quad
	A_s(y) := \sigma(V^{(\theta)}(s\,,y)),
\]
and $\bar{A}_s(y) := \sigma(\bar{V}^{(\theta)}(s\,,y))$
for every $s\ge T$ and $y\in\R^d$.
Thanks to Lemma \ref{lem:bdd:L2} and the Lipschitz continuity of $\sigma$,
\[
	K:=\adjustlimits\sup_{s>0}\sup_{y\in\R^d}\|A_s(y)\|_2<\infty.
\]
And we may apply the Cauchy-Schwarz inequality in order to see that
\[
	\mathscr{B}_s(y\,,z)\le\lip_\sigma^2\psi(s)\qquad\forall s\ge T,\
	y\in\R^d,
\]
where
\[
	\psi(t) = \sup_{x\in\R^d}
	\E\left(\left| \bar{V}^{(\theta)}(t\,,x) - V^{(\theta)}(t\,,x) \right|^2\right)
	\quad\forall t>0.
\]
We emphasize that: (1) $\psi$ is uniformly bounded
essentially because $A_s(y)$ has a bounded
second moment;
and (2) Our goal is to prove that $\psi$ vanishes at infinity.
	
With the preceding goal in mind, we first deduce from \eqref{ABC} that
\begin{align*}
	\psi(t) &\le 2K^2\int_0^T(p_{2(t-s)}*\Lambda)(0)\,\d s
		+\lip_\sigma^2\int_T^t(p_{2(t-s)}*\Lambda)(0)
		\psi(s)\,\d s\\
	&= \frac{2K^2}{\lip_{\sigma}^2}\int_{t-T}^t g(s)\,\d s  +
		\int_T^t g(t-s) \psi(s)\,\d s\quad\forall t>T,
\end{align*}
where
\[
	g(t) =  \lip_\sigma^2 (p_{2t}*\Lambda)(0)
	= \lip_\sigma^2 \int_{\R^d} \e^{-2t\|\xi\|^2}\,\mu(\d\xi)
	\qquad\forall t>0.
\]
Note that $g$ is non increasing, non negative, and integrable. In fact,
\[
	\delta:= \int_0^\infty g(t)\,\d t = \frac{\lip_\sigma^2}{2}\int_{\R^d}\frac{\mu(\d\xi)}{\|\xi\|^2}<1,
\]
thanks to \eqref{COND:WN}. Therefore,
\[
	\psi(t) \le 2\frac{K^2}{\lip_{\sigma}^2}\int_{t-T}^t g(s)\,\d s 
	+ \delta\sup_{s>T}\psi(s) \quad\forall t>T,
\]	
We first send $t\to\infty$, and then $T\to\infty$, in order to see that
\[
	\limsup_{t\to\infty}\psi(t) \le \delta\limsup_{t\to\infty} \psi(t).
\]
Since $\psi$ is uniformly bounded and non negative
and $\delta<1$, the above implies
that $\psi(t)\to0$ as $t\to\infty$. This has the desired result and completes the proof
of part (a) of Theorem \ref{th:main}.\qed

\subsection{Proof of Theorem \ref{th:main}(b)}
Part (b) is equivalent to the statement that $X_\theta$ is stationary.
It might help to recall that this means that
$\{X_\theta(x+y)\}_{x\in\R^d}$ has the same law as 
$X_\theta$ for every $y\in\R^d$ fixed. It is well known that since
$V^{(\theta)}(0\,,x)=\theta$ for all $x$, $V^{(\theta)}(t)$ is a (spatially)
stationary random field for every $t>0$; see Dalang \cite{Dalang1999} or 
Lemma 7.1 of Chen et al \cite{CKNP}. It follows from first principle that
weak limits (in metric $\rho$; see \eqref{rho}) of (spatially) stationary random
fields are (spatially) stationary. Therefore, the stationarity of $X_\theta$
follows from the spatial stationarity of $V^{(\theta)}(t)$ and \eqref{V:X_theta}. 
\qed

\subsection{Proof of Theorem \ref{th:main}(c)}

Suppose that $u_0\in\U0$ is annealed and invariant.
According to Lemma \ref{lem:Liouville}, 
$\theta=\E[u_0(x)]$ does not depend on $x\in\R^d$.
We aim to prove that the law of $u_0$ is $\nu_\theta$.
Let $u$ and $V^{(\theta)}$ solve \eqref{SPDE} using the same
noise $\eta$, independent of $u_0$, and started respectively at $u_0$
and $\theta$; see \eqref{mild} and \eqref{v:theta}.
Now 
\[
	\sup_{x\in\R^d}\Var[(p_t*u_0)(x)] = \| p_t*(u_0 -V^{(\theta)}(0))\|_{\U0}
	\qquad\forall t>0,
\]
and the annealed property of $u_0$ implies that the left-hand side
vanishes at $t=\infty$. Therefore, so does the right-hand size,
and so thanks to the asymptotic stability Lemma \ref{lem:u-v},
\[
	\rho(u(t)\,,V^{(\theta)}) \le
	\| u(t)-V^{(\theta)}(t)\|_{\U0}\to0\quad\text{as $t\to\infty$}.
\]
This proves that
\begin{equation}\label{rho:u:X}
	\rho(u(t)\,,X_\theta)\to0
	\qquad\text{as $t\to\infty$;}
\end{equation}
see \eqref{V:X_theta}. Because of the assumption that $u_0$
is invariant, the law of $u(t)$ is independent of $t$. This
and \eqref{rho:u:X} together imply 
that $\rho(u(t)\,,X_\theta)=0$ for every $t>0$, which is
the same as saying that the law of $u(t)$ is $\nu_\theta$.
This is the desired uniqueness statement of part (c).\qed

\subsection{Proof of Theorem \ref{th:main}(d)}
Choose and fix two distinct, otherwise arbitrary, real numbers $\theta_1$ and $\theta_2$.
We aim to prove that there exists a set $A\in\sS_d$ such that
\begin{equation}\label{goal:nu}
	\nu_{\theta_1}(A)=1
	\quad\text{and}\quad\nu_{\theta_2}(A)=0.
\end{equation}
Equivalently put,
we will construct a measurable set $A$ such that
\[
	\P\{X_{\theta_1}\in A\}=1-\P\{X_{\theta_2}\in A\}=1.
\]
Let $B(x\,,r)=\{y\in\R^d:\ \|y-x\|<r\}$ denote the usual ball of
radius $r>0$ about $x\in\R^d$. 
We will require the following technical lemma.

\begin{lemma}\label{lem:decorr:2}
	Consider \eqref{SPDE} started from $u_0\in\U0$,
	independent of $\eta$. Then, the function $\sR$ in
	\eqref{R} satisfies
	\[
		\sup_{x\in\R^d}\iint_{y,z\in B(x,r)}\sup_{t>0} 
		\left| \sR_t(y\,,z)\right| \d y\,\d z = \mathscr{o}(r^{2d})
		\qquad\text{as $r\to\infty$}.
	\]
\end{lemma}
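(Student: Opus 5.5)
The plan is to reuse the pointwise bound \eqref{R_t} from the proof of Lemma \ref{lem:decorr}. That bound was obtained uniformly in $t>0$ and was valid whether or not $\Lambda$ is a function. It reads
\[
	\sup_{t>0}|\sR_t(y\,,z)|\lesssim G(z-y),\qquad
	G(w):=\int_0^\infty (p_{2s}*\Lambda)(w)\,\d s .
\]
Here $G$ is the (positive) Green potential of $\Lambda$, namely $G=\tfrac14\,\mathscr{v}*\Lambda$ with $\mathscr{v}$ the zero-potential density. When $\Lambda$ is only a measure, I would read the display in the sense of measures (integrating against $\d y\,\d z$) or mollify first. The lemma therefore reduces to a purely deterministic estimate that no longer involves $x$: it suffices to show
\[
	\iint_{B(0,r)^2} G(z-y)\,\d y\,\d z=\mathscr{o}(r^{2d})\qquad\text{as } r\to\infty,
\]
since translation invariance removes the $\sup_x$.

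To prove this, substitute $w=z-y$ and bound the integral by $|B(0,r)|\int_{B(0,2r)}G(w)\,\d w$. It then suffices to show $\int_{B(0,2r)}G=\mathscr{o}(r^d)$. Equivalently, the averages of $G$ over large balls must tend to zero. I would get this from Fourier analysis rather than pointwise decay of $G$. Replace $\1_{B(0,r)}$ by a Gaussian weight: $\1_{B(0,r)}(y)\lesssim r^d p_{r^2}(y)$, up to a constant depending only on $d$. Then
\[
	\iint_{B(0,r)^2} G(z-y)\,\d y\,\d z\lesssim r^{2d}\iint p_{r^2}(y)p_{r^2}(z)G(z-y)\,\d y\,\d z
	= r^{2d}\int_0^\infty (p_{2s+2r^2}*\Lambda)(0)\,\d s .
\]
This is the same computation as for $I_1$ in the proof of Lemma \ref{lem:decorr}. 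The last integral equals $\tfrac12\int_{\R^d}\e^{-2r^2\|\xi\|^2}\,\|\xi\|^{-2}\,\mu(\d\xi)$.

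That quantity tends to zero by dominated convergence, since $\int\|\xi\|^{-2}\mu(\d\xi)<\infty$ by \eqref{COND:WN}. The one place where care is needed is the point $\xi=0$: the integrand tends to $0$ only on $\{\xi\neq0\}$. However, finiteness of $\int\|\xi\|^{-2}\mu(\d\xi)$ forces $\mu\{0\}=0$, so there is no atom to worry about. This gives the required $\mathscr{o}(r^{2d})$, uniformly in $x$.

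The main obstacle I expect is justifying the comparison $\1_{B(0,r)}\lesssim r^dp_{r^2}$ inside the double integral. This comparison needs $G\ge0$, which holds because $\Lambda=\sF\mu$ is a nonnegative measure in the intended setting. If positivity of $\Lambda$ were not available, I would instead work directly in Fourier space. Using $|\widehat{\1_{B(0,r)}}(\xi)|^2$ gives $\iint_{B(0,r)^2}G(z-y)\,\d y\,\d z\asymp\int|\widehat{\1_{B(0,r)}}(\xi)|^2\|\xi\|^{-2}\mu(\d\xi)$. Then the bound $|\widehat{\1_{B(0,r)}}|\le |B(0,r)|\min(1,c/(r\|\xi\|))$ together with dominated convergence gives the same conclusion.
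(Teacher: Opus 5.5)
Your proposal is correct and follows essentially the same route as the paper. Both arguments reduce to the uniform bound \eqref{R_t}, integrate $G(z-y)$ over $B(x,r)^2$ (translation invariance removes the $\sup_x$), pass to Fourier space as in \eqref{mathscr(E)}, and conclude by dominated convergence using \eqref{COND:WN}. The only difference is in the last step. The paper keeps the normalized ball indicator and applies the Riemann--Lebesgue lemma to $\hat I_1(r\xi)$, which is your fallback. Your main line instead dominates $\1_{B(0,r)}$ by $C_d\,r^d p_{r^2}$, so that the Fourier multiplier becomes the explicit factor $\e^{-2r^2\|\xi\|^2}$; this is legitimate because $\Lambda\ge0$, which \eqref{R_t} already presupposes.
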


\begin{proof}
	Define
	\[
		I_r(a) = |B(0\,,r)|^{-1}\1_{B(0,r)}(a) = r^{-d}I_1(a/r)\qquad\forall r>0,\ a\in\R^d,
	\]
	where $|\,\cdots|$ denotes Lebesgue measure on $\R^d$.
	Thanks to \eqref{R_t} and
	a few back-to-back applications of the Tonelli theorem,  
	\begin{align*}
		&\iint_{y,z\in B(x,r)}\sup_{t>0}
			\left|\sR_t(y\,,z)\right| \d y\,\d z\\
		&\lesssim r^{2d}\iint_{\R^d\times\R^d} I_r(y-x)I_r(z-x)\sup_{t>0}
			\left|\sR_t(y\,,z)\right| \d y\,\d z\\
		&\lesssim r^{2d}\iint_{\R^d\times\R^d}\d y\,\d z\,
			I_r(y-x)I_r(z-x)\int_0^\infty (p_{2s}*\Lambda)(z-y)\,\d s\\
		&= r^{2d}\int_0^\infty (p_{2s}* I_r*I_r*\Lambda)(0)\,\d s 
			= r^{2d}\int_0^\infty\d s\int_{\R^d}\mu(\d\xi)\
			\e^{-2s\|\xi\|^2}|\hat{I}_r(\xi)|^2\\
		&= r^{2d}\int_{\R^d}|\hat{I}_1(r\xi)|^2\,\frac{\mu(\d\xi)}{2\|\xi\|^2},
	\end{align*}
	uniformly for all $r>0$ and $x\in\R^d$. Thanks to the Riemann-Lebesgue lemma,
	$\hat{I}_1(r\xi) = |B(0\,,1)|^{-1}\int_{B(0,1)}\exp(ir\xi\cdot y)\,\d y\to0$
	as $r\to\infty$,
	pointwise and boundedly. Therefore, the dominated convergence
	theorem and condition \eqref{COND:WN} together imply the lemma.
\end{proof}

With Lemma \ref{lem:decorr:2} under way, we can now return 
to our proof of Theorem \ref{th:main}(e). Choose and fix an arbitrary
$\theta\in\R$ and define $\sR^{(\theta)}_t(y\,,z)$ as was done
in \eqref{R} but with $u$ specialized to $V^{(\theta)}$. That is,
\[
		\sR^{(\theta)}_t(y\,,z) = \Cov[ V^{(\theta)}(t\,,y)\,,V^{(\theta)}(t\,,z)] 
		= \sR^{(\theta)}_t(0\,,z-y),
\]
for all $t>0$ and $y,z\in\R^d$. 	The second identity above is a consequence
of spatial stationarity; see the already-proved part (b) of Theorem \ref{th:main}.
Lemma \ref{lem:decorr:2} yields the following: As $r\to\infty$,
\begin{equation}\label{AVG}
	\adjustlimits\sup_{t>0}\sup_{x\in\R^d}\iint_{y,z\in B(x\,,r)}
	\left| \sR^{(\theta)}_t(y\,,z)\right|\d y\,\d z= \mathscr{o}(r^{2d}).
\end{equation}
Because $\E[V^{(\theta)}(t\,,x)]=\theta$ for all $t>0$,
$\theta\in\R$, and $x\in\R^d$ --
see \eqref{v:theta} -- it follows from \eqref{AVG},
\eqref{V:X_theta}, and the definition \eqref{rho} of $\rho$ that
\begin{align}\notag
	&\limsup_{r\to\infty}
		\E\left( \left| |B(0,r)|^{-1}\int_{B(0,r)}  X_{\theta_i}(y)\,\d y
		- \theta_i \right|^2\right)
		\label{AVO}\\
	&\ =\adjustlimits\limsup_{r\to\infty}
		\lim_{t\to\infty}
		\E\left( \left| |B(0,r)|^{-1}\int_{B(0,r)}  V^{(\theta_i)}(t\,,y)\,\d y
		- \theta_i \right|^2\right)\\\notag
	&\ = \adjustlimits\limsup_{r\to\infty}
		\lim_{t\to\infty}
		|B(0\,,r)|^{-2}  \iint_{y,z\in B(0,r)}
		\sR^{(\theta_i)}(y\,,z)\,\d y\,\d z =0, \text{ for } i=1,2.
\end{align}
Therefore, by the Borel-Cantelli lemma
there exists a non-random, unbounded sequence $\{r_k\}_{k=1}^\infty$
of positive rational numbers such that $\nu_{\theta_i}(A_i)=1$ for $i=1,2$
where
\[
	A_i := \left\{ h\in \bS_d :\ 
	\lim_{k\to\infty} |B(0,r_k)|^{-1}\int_{B(0,r_k)} h(y)\,\d y = \theta_i \right\}.
\]
We pause to emphasize that $A_1,A_2\in \sS_d$ are measurable
subsets of $\bS_d$; see \eqref{def:S_d} and \eqref{def:sS_d}.
Because $\theta_1\neq\theta_2$,
the set $A=A_1$ has the desired property \eqref{goal:nu}. This 
proves part (e) of Theorem \ref{th:main}, which also concludes the proof of
that theorem.\qed

\section{H\"older continuity}\label{sec:Holder}

The proof of Theorem \ref{th:main} was complicated greatly by the fact
that the said theorem yields a characterization of all annealed, ergodic,
invariant measures in a way that ensures that samples from those invariant
measures are {\it a priori} random fields. That effort would be simplified
if one were able to improve Theorem \ref{th:main} so that the
measures $\nu_\theta$ were probability measures on the space of continuous
functions on $\R^d$. It is not difficult to see that this cannot be done under
the minimal weak-noise hypothesis \eqref{COND:WN}; see the remarks below.
Samy Tindel (personal communications) has asked us whether one can find
more restrictive conditions under which samples from $\nu_\theta$ are
in fact continuous functions. The goal of this section is to answer this question
in the affirmative. In order to do that, let us first
consider the following strengthening,
due to Sanz-Sol\'e and Sarr\`a \cite{SS1999,SS2000} 
of Dalang's condition \eqref{cond:Dalang}:
\begin{equation}\label{SSS}
	\exists\beta\in(0\,,1):\qquad
	\int_{\R^d} \frac{\mu(\d\xi)}{(1+\|\xi\|^2)^\beta} <\infty.
\end{equation} 

Choose and fix some $\theta\in\R$ and recall 
from \eqref{v:theta} that $V^{(\theta)}$
is the solution to our SPDE \eqref{SPDE} starting
from constant initial profile $\theta$.
Among other things, the results of Sanz-Sol\'e and Sarr\`a ({\it ibid.}),
condition \eqref{SSS} ensures that $(t\,,x)\mapsto V^{(\theta)}(t\,,x)$
is H\"older continuous (up to a modification), which in turn implies
immediately that, for every $t>0$ fixed, $x\mapsto V^{(\theta)}(t\,,x)$ lies in
the space $C^\varepsilon_{\textit{loc}}(\R^d)$ of locally $\varepsilon$-H\"older
continuous functions on $\R^d$ for some $\varepsilon>0$. It has been shown
in Khoshnevisan and Sanz-Sol\'e \cite{KS2023} that the condition
\eqref{SSS} for H\"older regularity is unimproveable. Since we are interested in
proving the same result, but at infinitely large times $t\gg1$, we must therefore
rely on \eqref{SSS}. In order to be able to carry the analysis at $t\approx\infty$, 
we will also need the following  weak-noise condition which is more restrictive
than \eqref{COND:WN}:
\begin{equation}\label{cond:Lk}
	\int_{\R^d}\frac{\mu(\d z)}{\|z\|^2}<\infty	\quad\text{and}\quad\lip_\sigma^2\int_{\R^d}\frac{\mu(\d\xi)}{\|\xi\|^2} <\frac{1-\beta}{4d},
\end{equation}
for the same constant $\beta$ that satisfies \eqref{SSS}.
With the preceding under way, we can present the main result of this section.

\begin{theorem}\label{th:Holder}
	Under \eqref{SSS} and \eqref{cond:Lk},
	there exists $\varepsilon>0$ such 
	that $\nu_\theta$ is supported on $C_{\textit{loc}}^\varepsilon(\R^d)$.
\end{theorem}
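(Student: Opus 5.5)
The plan is to prove a uniform-in-time Kolmogorov bound for the solution $V^{(\theta)}$ started from the constant $\theta$, and then pass it to $X_\theta$ by $L^2$ limits. Specifically, we aim for a moment bound of the form
\[
	\adjustlimits\sup_{t\ge1}\sup_{x\in\R^d}\E\left(|V^{(\theta)}(t\,,x+h)-V^{(\theta)}(t\,,x)|^{k}\right)\lesssim \|h\|^{k\gamma}
	\qquad\text{for } \|h\|\le1,
\]
for some $\gamma>0$ and some $k$ with $k\gamma>d$. By \eqref{duals} the same bound holds for $\cev V^{(\theta)}_t(t)$. By \eqref{X_theta} and Fatou's lemma, it then transfers to $X_\theta$. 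Here we use that $L^2$ convergence gives a.s. convergence along subsequences at each pair of points, and that the $L^k$ bounds are uniform in $t$. The Kolmogorov continuity theorem then gives a modification of $X_\theta$ in $C^\varepsilon_{\textit{loc}}(\R^d)$ for every $\varepsilon<\gamma-d/k$. Since that modification agrees with $X_\theta$ a.e. in $x$ a.s., it defines the same element of $\bS_d$. Hence $\nu_\theta$ charges $C^\varepsilon_{\textit{loc}}$, and we note that this set is measurable in $\sS_d$, or at least has full outer measure.

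The first step is a uniform-in-time $L^k$ bound $\sup_{t,x}\E|V^{(\theta)}(t\,,x)|^k<\infty$. We would mimic Lemma \ref{lem:bdd:L2}, replacing the Itô isometry by the BDG inequality with constant $z_k\le 2\sqrt k$ (Carlen–Krée) and Minkowski's inequality. This gives
\[
	M_k^2\le 2\theta^2+ 8k\left[(1+\varepsilon^{-1})\sigma(0)^2+(1+\varepsilon)\lip_\sigma^2M_k^2\right]\mathscr E .
\]
This closes provided $8k\lip_\sigma^2\mathscr E<1$, that is, $4k\lip_\sigma^2\int\mu(\d\xi)/\|\xi\|^2<1$. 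Condition \eqref{cond:Lk} allows this for every $k<(1-\beta)^{-1}\cdot d\cdot\ldots$; more precisely, it permits $k$ up to about $d/(1-\beta)$ in the BDG constant. This is exactly the range in which $k\gamma>d$ can hold with $\gamma=(1-\beta)/2$-type exponents, which is evidently why the constant $(1-\beta)/(4d)$ appears.

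The second step is the increment bound. Write
\[
	V(t\,,x+h)-V(t\,,x)=\int_0^t\!\int q_{t-s}(x+h\,,x;y)\,\sigma(V(s\,,y))\,\eta(\d s\,\d y),
\]
since the deterministic part $\theta$ cancels. BDG and the uniform $L^k$ bound for $\sigma(V)$ reduce the problem to bounding
\[
	\int_0^\infty\!\int_{\R^d}|1-\e^{i\xi\cdot h}|^2\e^{-2s\|\xi\|^2}\mu(\d\xi)\,\d s=\int\frac{|1-\e^{i\xi\cdot h}|^2}{2\|\xi\|^2}\mu(\d\xi).
\]
Here positive-definiteness lets us pull the bounded moment out, as in the Dalang–Sanz-Solé–Sarrà argument where the $L^k$ norm of $\sigma(V)$ is uniform. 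We split the $\mu$-integral at $\|\xi\|=1$:
\begin{itemize}
	\item for $\|\xi\|\ge1$, use $|1-\e^{i\xi\cdot h}|^2\le 4\min(1,\|\xi\|\|h\|)^{2(1-\beta)}$ together with \eqref{SSS};
	\item for $\|\xi\|<1$, use $|1-\e^{i\xi\cdot h}|^2\le\|\xi\|^2\|h\|^2$ and the finiteness of $\mu$ on bounded sets.
\end{itemize}
Together these give a bound $\lesssim\|h\|^{2(1-\beta)}$ uniformly in $t$. So $\gamma=1-\beta$ with $k$-th moments, and we need $k>d/(1-\beta)$.

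The main obstacle is reconciling the two requirements on $k$. The moment bound of the first step needs roughly $k\lip_\sigma^2\int\|\xi\|^{-2}\mu(\d\xi)<1/4$ (up to the BDG constant), while Kolmogorov needs $k>d/(1-\beta)$. I would check carefully that \eqref{cond:Lk}, with its constant $(1-\beta)/(4d)$, is precisely what allows choosing such a $k$. If the constants do not quite match, the fallback is to interpolate: use the $L^2$ increment bound of order $\|h\|^{2(1-\beta)}$ together with a higher-moment bound for a slightly smaller $k$, via Hölder's inequality, to get $k\gamma>d$ for some $\gamma<1-\beta$. A secondary point is the pull-out of $\sigma(V)$ when $\Lambda$ is only a measure. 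As elsewhere in the paper, we handle it via the usual convolution/Fourier representation, bounding $\E[\sigma\sigma]$-weighted integrals by the weight-free ones, which is valid since the integrand $q\,q$ convolved with $\Lambda$ reduces to a nonnegative Fourier integral after BDG and Minkowski.
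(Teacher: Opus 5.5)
Your overall architecture is the paper's (Lemmas \ref{lem:Lk:bdd}, \ref{lem:cont:x:1}, \ref{lem:cont:x:2}): uniform $L^k$ bounds from BDG under $4k\lip_\sigma^2\int\|\xi\|^{-2}\mu(\d\xi)<1$; a uniform-in-time bound $\E|V^{(\theta)}(t\,,x+h)-V^{(\theta)}(t\,,x)|^k\lesssim\|h\|^{k(1-\beta)}$; a real $k>d/(1-\beta)$, which \eqref{cond:Lk} permits directly, so no interpolation is needed; then Kolmogorov, and transfer to $X_\theta$ via \eqref{duals}, \eqref{X_theta} and Fatou. There is, however, a genuine gap in your second step, which is the heart of the theorem. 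After BDG you must control
\[
	\Big\|\int_0^t\!\iint q_{t-s}(y)\,q_{t-s}(z)\,\sigma(V^{(\theta)}(s\,,y))\,\sigma(V^{(\theta)}(s\,,z))\,\Lambda(y-z)\,\d y\,\d z\,\d s\Big\|_{k/2},
\]
and the kernel $q_{t-s}=q_{t-s}(x+h\,,x\,;\cdot)$ changes sign. Pulling out the uniform bound on $\|\sigma(V^{(\theta)})\|_k$ by Minkowski forces absolute values on the kernel, so what you actually obtain is
\[
	4kL_k^2\int_0^\infty\d s\iint_{\R^d\times\R^d}|q_s(y)|\,|q_s(z)|\,\Lambda(y-z)\,\d y\,\d z .
\]
This is not equal to, and is in general strictly larger than, $\int|1-\e^{i\xi\cdot h}|^2(2\|\xi\|^2)^{-1}\mu(\d\xi)$. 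That Fourier identity holds only for the signed form $\iint q_sq_s\Lambda$, which is what you get only when the integrand is non-random, i.e.\ the additive case ($\sigma$ constant) of \eqref{Cov:Gauss}. Positive-definiteness does not rescue the step. Nonnegativity of the unweighted signed form gives no upper bound for the same form weighted by a non-constant factor such as $\|\sigma(V)\sigma(V)\|_{k/2}$ or $\E[\sigma\sigma]$. Nor is this a ``$\Lambda$ is a measure'' technicality, as your last paragraph suggests; the sign problem is present for smooth $\Lambda$ as well.

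The paper avoids this with the Sanz-Sol\'e--Sarr\`a factorization \eqref{factorization}. The random factor $Y_\alpha$, with $\alpha=(1-\beta)/2$, is a stochastic integral against the nonnegative kernel $(r-s)^{-\alpha}p_{r-s}(z-y)$. So BDG plus pulling out $L_k$ is legitimate, and it gives $\sup_{r,z}\|Y_\alpha(r\,,z)\|_k^2\lesssim\int\|\xi\|^{-2\beta}\mu(\d\xi)<\infty$, using \eqref{SSS} at high frequencies and \eqref{cond:Lk} at low ones. The spatial increment then falls on the deterministic kernel $(t-r)^{\alpha-1}p_{t-r}(\cdot-x)$. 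There absolute values are harmless thanks to $\int|p_r(z-x)-p_r(z-y)|\,\d z\lesssim\min(\|x-y\|/\sqrt r\,,1)$, which yields $\|x-y\|^{1-\beta}$ uniformly in $t$.

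Alternatively, you could keep your direct approach but replace the Fourier step by pointwise kernel bounds. For $s\le\|h\|^2$ use $|q_s|\le p_s(x+h-\cdot)+p_s(x-\cdot)$; for $s\ge\|h\|^2$ use $|q_s(y)|\lesssim\|h\|s^{-1/2}p_{4s}(x-y)$, which follows from the gradient of the heat kernel. Since $\Lambda\ge0$, the two ranges contribute at most constant multiples of
\[
	\int\frac{1-\e^{-2\|h\|^2\|\xi\|^2}}{\|\xi\|^2}\,\mu(\d\xi)
	\quad\text{and}\quad
	\|h\|^2\int_{\|h\|^2}^\infty s^{-1}(p_{8s}*\Lambda)(0)\,\d s ,
\]
and both are $\lesssim\|h\|^{2(1-\beta)}\int\|\xi\|^{-2\beta}\mu(\d\xi)$. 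Either route works, but as written your key increment estimate is unproved.
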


Before we begin the proof of Theorem \ref{th:Holder}, let us state an unresolved problem.

\begin{conjecture}
	We believe that Theorem \ref{th:Holder} holds
	when the weak-noise constant $(1-\beta)/(4d)$ 
	is replaced by a larger number of the form $c(\beta)/ d$. 
	Since the number $4$ comes from the asymptotically
	sharp constant in the Burkholder-Davis-Gundy (BDG) inequality,
	our conjecture is equivalent to the statement that the 
	asymptotically optimal constant in the BDG inequality does not yield
	the sharp result in the present setting.
\end{conjecture}

Let us begin the proof of Theorem \ref{th:Holder}.
For the remainder of this section we choose and fix an arbitrary
$\theta\in\R$ and write 
\begin{equation}\label{I}
	I(t\,,x) = \int_{(0,t)\times\R^d} p_{t-s}(y-x)
	\sigma\left( V^{(\theta)}(s\,,y) \right) \eta(\d s\,\d y),
\end{equation}
so that $V^{(\theta)}(t\,,x) =\theta+I(t\,,x)$; see \eqref{v:theta}.

\begin{lemma}\label{lem:Lk:bdd}
	Suppose that there exists $k\ge1$ such that
	\begin{equation}\label{cond:Lk:bdd}
		\int_{\R^d}\frac{\mu(\d\xi)}{\|\xi\|^2} <\infty
		\quad\text{and}\quad
		\lip_\sigma^2\int_{\R^d}\frac{\mu(\d\xi)}{\|\xi\|^2} < (4k)^{-1}.
	\end{equation}
	Then, $\sup_{t>0}\sup_{x\in\R^d}\E(|V^{(\theta)}(t\,,x)|^k)<\infty$.
\end{lemma}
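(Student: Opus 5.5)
We follow the $L^2$ argument of Lemma \ref{lem:bdd:L2}, replacing the It\^o isometry by the Burkholder-Davis-Gundy inequality. For $k\le 2$ the claim follows from Lemma \ref{lem:bdd:L2} and Jensen's inequality, since \eqref{cond:Lk:bdd} then implies \eqref{COND:WN}. So assume $k\ge2$. Set
\[
	M_t:=\sup_{s\in(0,t]}\sup_{x\in\R^d}\|V^{(\theta)}(s\,,x)\|_k .
\]
A standard Picard iteration on $(0\,,t]$, as in Dalang's theory, shows that $M_t<\infty$ for each finite $t$. The aim is an inequality of the form $M_t\le C + \gamma M_t$ with $\gamma<1$ independent of $t$; this yields $\sup_t M_t\le C/(1-\gamma)$.

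\textbf{Step 1 (BDG).} Apply BDG with the asymptotically sharp constant $z_k\le 2\sqrt{k}$, so that $z_k^2\le 4k$, to the stochastic integral $I(t\,,x)$ in \eqref{I}. Then use Minkowski's inequality in $L^{k/2}(\Omega)$ on the quadratic variation, assuming first that $\Lambda$ is a function. This gives
\[
	\|I(t\,,x)\|_k^2\le 4k\int_0^t\d s\iint p_{t-s}(y-x)p_{t-s}(z-x)\Lambda(y-z)\,\|\sigma(V(s\,,y))\sigma(V(s\,,z))\|_{k/2}\,\d y\,\d z .
\]
By Cauchy--Schwarz, $\|\sigma(V(s\,,y))\sigma(V(s\,,z))\|_{k/2}\le\sup_y\|\sigma(V(s\,,y))\|_k^2$. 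Using $\|\sigma(a)\|_k\le|\sigma(0)|+\lip_\sigma\|a\|_k$ together with $(a+b)^2\le(1+\varepsilon^{-1})a^2+(1+\varepsilon)b^2$, we obtain
\[
	\|I(t\,,x)\|_k^2\le 4k\bigl[(1+\varepsilon^{-1})|\sigma(0)|^2+(1+\varepsilon)\lip_\sigma^2M_t^2\bigr]\mathscr{E},
\]
where $\mathscr{E}=\frac12\int\mu(\d\xi)/\|\xi\|^2$ is given by \eqref{mathscr(E)}.

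\textbf{Step 2 (closing the bound).} Since $\|V(t\,,x)\|_k\le|\theta|+\|I(t\,,x)\|_k$, the same $\varepsilon$-splitting gives
\[
	M_t^2\le(1+\varepsilon^{-1})\theta^2+(1+\varepsilon)\|I\|_k^2 .
\]
Substituting Step 1 produces a coefficient in front of $M_t^2$ equal to $(1+\varepsilon)^2\,4k\,\lip_\sigma^2\,\mathscr{E}$. Condition \eqref{cond:Lk:bdd} says $4k\lip_\sigma^2\cdot 2\mathscr{E}<1$, so this coefficient is strictly below $1$ once $\varepsilon$ is small; in fact there is a spare factor of $2$ in the constants. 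Solving the inequality for $M_t^2$ gives a bound uniform in $t$.

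\textbf{Step 3 (measure case).} When $\Lambda$ is only a measure, the same argument goes through after writing the double integral as a convolution against $\Lambda$, exactly as in earlier proofs. This requires $\Lambda\ge0$, which we assume here as elsewhere in the paper. Alternatively, one can avoid the issue by bounding the quadratic variation through a positive-definite kernel.

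\textbf{Main obstacle.} The delicate step is Step 1. One needs BDG for Walsh-Dalang integrals with an explicit constant $\le 4k$, and one needs a justification for moving the $L^{k/2}$ norm inside the $\Lambda$-integral. The latter uses Minkowski's integral inequality, which requires a nonnegative integrating measure. The finiteness of $M_t$, needed to absorb $\gamma M_t$ into the left side, can be cited from Dalang's local-in-time moment bounds.
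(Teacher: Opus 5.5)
Your proposal is correct and takes essentially the same route as the paper: BDG with constant $4k$, Minkowski's inequality in $L^{k/2}(\Omega)$, the identity \eqref{mathscr(E)}, and absorption of $M_t^2$ into the left-hand side using the local-in-time finiteness of $M_t$ from Dalang's theory. The differences are minor improvements. Your $(1+\varepsilon)$-splitting of $\|V\|_k\le|\theta|+\|I\|_k$ replaces the paper's $(a+b)^2\le 2a^2+2b^2$ and so leaves a spare factor of $2$. Your separate treatment of $k<2$ via Lemma \ref{lem:bdd:L2} avoids using $\|\cdot\|_{k/2}$ when it is not a norm.
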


\begin{proof}
	Throughout, let us write $\|\,\cdots\|_k$ for the $L^k(\Omega)$-norm
	of whatever appears inside, and let
	\[
		M_t = \sup_{r\le t}\sup_{w\in\R^d}\|V^{(\theta)}(r\,,w)\|_k
		\qquad\forall t>0.
	\]
	According to the theory of Dalang \cite{Dalang1999}, $M_t<\infty$ for every $t>0$.
	Now we proceed with the proof.
	
	First we assume additionally that $\Lambda$ is a function.
	A suitable formulation of the BDG inequality \cite{CBMS} yields the following:
	For every $k\ge1$, $t>0$, and $x\in\R^d$,
	\begin{align}\label{I:B}
		&\|I(t\,,x)\|_k^2 \\\nonumber
		&\le 4k\int_0^t\d s\int_{\R^d}\d z\int_{\R^d}\d y\
			p_{t-s}(y-x)p_{t-s}(z-x)|\mathscr{A}(s\,;\,x\,,y)|^2\Lambda(y-z),
	\end{align}
	where, for all $0<s\leq t$, $x,y\in\R^d$ and $\varepsilon>0$,
	 \begin{align*}
		\mathscr{A}(s\,&;\,x\,,y) = \|\sigma(V^{(\theta)}(s\,,x))
			\sigma(V^{(\theta)}(s\,,y))\|_{k/2}\\
		&\le   |\sigma(0)|^2 + |\sigma(0)|\lip_\sigma  \|V^{(\theta)}(s\,,x) \|_{k/2} 
		+|\sigma(0)|\lip_\sigma  \|V^{(\theta)}(s\,,y) \|_{k/2}\\
		&\qquad \qquad +   \lip_\sigma^2\|V^{(\theta)}(s\,,x)V^{(\theta)}(s\,,y)\|_{k/2}\\
		&\le (1+\varepsilon^{-1}) |\sigma(0)|^2 + (1+ \varepsilon)\lip_\sigma^2\, M_{t}^2.
	\end{align*}
	The second line is a consequence of  the fact that
	$|\sigma(a)|\le |\sigma(0)|+ \lip_\sigma|a|$ for all $a\in\R$  because
	$\sigma$ is Lipschitz continuous.
	Thanks to the preceding and \eqref{mild}, we may then write  
	\[
		M_t^2 \le 2\theta^2 + 8k\left[ (1+\varepsilon^{-1}) |\sigma(0)|^2 + (1+ \varepsilon) \lip_\sigma^2\right] M_t^2\int_0^t
		\< p_s\,,p_s*\Lambda\>_{L^2(\R^d)}\,\d s,
	\] 
    for all $t>0$.
	[In the first line, we also used the elementary inequality, $(a+b)^2 \le 2a^2+2b^2$, valid
	for all $a,b\in\R$.]
	This inequality is valid more generally when $\Lambda$ is a measure. However,
	we must instead write the quantity on the 
	second line in \eqref{I:B} as a long
	convolution integral against the measure $\Lambda$.
	In any case, thanks to the preceding and \eqref{mild},
	\begin{align*}
		M_t^2 &\le  2\theta^2 + 8k\left[ (1+\varepsilon^{-1}) |\sigma(0)|^2 + (1+ \varepsilon) \lip_\sigma^2\right]  M_t^2
			\int_0^t \d s\int_{\R^d}\mu(\d\xi)\ \e^{-2s\|\xi\|^2}\\
		&\le 2\theta^2 + 4k\left[ (1+\varepsilon^{-1}) |\sigma(0)|^2 + (1+ \varepsilon) \lip_\sigma^2\right]  M_t^2\int_{\R^d}\frac{\mu(\d\xi)}{\|\xi\|^2}.
	\end{align*}
	Since $\varepsilon>0$ is arbitrary, solve in order to see that $\sup_{t>0}M_t<\infty$ provided that $k\ge1$ satisfies
	\eqref{cond:Lk:bdd}. This proves the lemma.
\end{proof}

\begin{lemma}\label{lem:cont:x:1}
	If \eqref{SSS} holds for some $\beta\in(0\,,1)$, and 
	\eqref{cond:Lk:bdd} is valid for some $k\ge1$, then
	\[
		\sup_{t>0}\E\left( | V^{(\theta)}(t\,,x) - V^{(\theta)}(t\,,y)|^k \right) \lesssim
		\|x-y\|^{k(1-\beta)},
	\]
	uniformly for all $x,y\in\R^d$. 
\end{lemma}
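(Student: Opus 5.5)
The plan is to bound the $k$th moment of the increment directly by the Burkholder--Davis--Gundy inequality. Then we use the uniform moment bound of Lemma \ref{lem:Lk:bdd} to remove the nonlinearity, and we are left with a deterministic integral that is estimated by splitting in time and passing to Fourier variables. Throughout, I take $\Lambda$ to be a nonnegative measure (as in Dalang's setting), so that absolute values may be placed on the kernels.

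\emph{Reduction to a deterministic integral.} Since $V^{(\theta)}=\theta+I$, we have
\[
V^{(\theta)}(t\,,x)-V^{(\theta)}(t\,,y)=\int_{(0,t)\times\R^d} q_{t-s}(x\,,y\,;z)\,\sigma(V^{(\theta)}(s\,,z))\,\eta(\d s\,\d z),
\]
with $q$ as in the proof of Lemma \ref{lem:L2_cont}. I first treat the case that $\Lambda$ is a function, exactly as in the proof of Lemma \ref{lem:Lk:bdd}; the measure case is the usual long-hand convolution rewrite. Applying the BDG inequality as in \eqref{I:B}, and then Hölder's inequality to the product of the two $\sigma$ terms, gives
\[
\|V^{(\theta)}(t\,,x)-V^{(\theta)}(t\,,y)\|_k^2\le 4k\,C\int_0^\infty\d s\iint |q_s(x\,,y\,;z)|\,|q_s(x\,,y\,;z')|\,\Lambda(z-z')\,\d z\,\d z'.
\]
Here $C=\sup_{s,z}\|\sigma(V^{(\theta)}(s\,,z))\|_k^2$, which is finite by Lemma \ref{lem:Lk:bdd} together with $|\sigma(a)|\le|\sigma(0)|+\lip_\sigma|a|$. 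Extending the time integral to $(0\,,\infty)$ is what makes the bound uniform in $t>0$. It therefore suffices to show that this deterministic integral is $\lesssim h^{1-\beta}$, where $h=\|x-y\|^2$.

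\emph{Time splitting.} I split the integral at $s=h$.
\begin{itemize}
\item For $s\le h$, I use $|q_s|\le p_s(x-\cdot)+p_s(y-\cdot)$. Since $\Lambda\ge0$ is positive definite, $(p_{2s}*\Lambda)(a)\le(p_{2s}*\Lambda)(0)$, so this piece is at most
\[
4\int_0^h (p_{2s}*\Lambda)(0)\,\d s=4\int_{\R^d}\frac{1-\e^{-2h\|\xi\|^2}}{2\|\xi\|^2}\,\mu(\d\xi).
\]
\item For $s>h$, I use the mean value theorem together with the Gaussian gradient bound $|\nabla p_s(w)|\lesssim s^{-1/2}p_{2s}(w)$. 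This should give $|q_s(x\,,y\,;z)|\lesssim (h/s)^{1/2}\,\sup_{\lambda\in[0,1]}p_{2s}(\cdot)$ along the segment from $x$ to $y$. Because $\|x-y\|^2=h<s$, that supremum can be replaced, up to constants, by a single Gaussian $p_{cs}(x-z)$. Positive definiteness again reduces this piece to
\[
\lesssim\int_h^\infty\frac hs\,(p_{cs}*\Lambda)(0)\,\d s=\int_{\R^d}\mu(\d\xi)\int_h^\infty \frac hs\,\e^{-cs\|\xi\|^2}\d s.
\]
\end{itemize}

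\emph{Fourier estimate.} Combining the two pieces, it remains to show that, with $\lambda=h\|\xi\|^2$,
\[
\frac{1-\e^{-2\lambda}}{\|\xi\|^2}+h\int_1^\infty \frac{\e^{-c\lambda u}}{u}\,\d u\ \lesssim\ h^{1-\beta}\,\|\xi\|^{-2\beta}.
\]
This is elementary. For $\lambda\ge1$ the left side is $\lesssim\|\xi\|^{-2}\le h^{1-\beta}\|\xi\|^{-2\beta}$. For $\lambda<1$ it is $\lesssim h(1+\log(1/\lambda))\lesssim h\lambda^{-\beta}$. Finally, $\int\|\xi\|^{-2\beta}\,\mu(\d\xi)<\infty$: near the origin this follows from $\int\|\xi\|^{-2}\,\mu(\d\xi)<\infty$, since $\beta<1$, and near infinity it follows from \eqref{SSS}. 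Hence $\|V^{(\theta)}(t\,,x)-V^{(\theta)}(t\,,y)\|_k^2\lesssim\|x-y\|^{2(1-\beta)}$ uniformly in $t$, and raising to the power $k/2$ gives the claim.

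\emph{Main obstacle.} The delicate step is the large-$s$ piece. There the absolute values on $q_s$ block a direct Fourier computation, so one must dominate $|q_s|$ by a single Gaussian kernel, uniformly over the segment joining $x$ and $y$. This is what lets positive definiteness of $\Lambda$ reduce everything to $(p_{cs}*\Lambda)(0)$. I would first try the pointwise bound $\sup_{\|w-w'\|\le\sqrt s}p_{2s}(w')\lesssim p_{4s}(w)$, which holds because $\|w'\|^2\ge\|w\|^2/2-s$.
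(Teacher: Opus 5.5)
Your proof is correct, but it takes a different route from the paper.

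\textbf{The paper's route.} The paper uses the Sanz-Sol\'e--Sarr\`a factorization. With $\alpha=(1-\beta)/2$, the stochastic Fubini theorem gives $I(t\,,x)=\frac{\sin(\pi\alpha)}{\pi}\int_0^t\d r\int_{\R^d}\d z\,(t-r)^{\alpha-1}p_{t-r}(z-x)\,Y_\alpha(r\,,z)$. Here $Y_\alpha$ is the stochastic convolution of $\sigma(V^{(\theta)})$ against the singular kernel $(r-s)^{-\alpha}p_{r-s}$. BDG is applied only to $Y_\alpha$. The bound $\sup_{r,z}\|Y_\alpha(r\,,z)\|_k<\infty$ then comes from $\int_{\R^d}\|\xi\|^{-2\beta}\,\mu(\d\xi)<\infty$, via $\int_0^\infty s^{-2\alpha}\e^{-2s\|\xi\|^2}\,\d s\propto\|\xi\|^{-2\beta}$. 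Minkowski's inequality then puts the spatial increment entirely on a deterministic kernel. The $L^1(\d z)$ bound $\int_{\R^d}|p_r(z-x)-p_r(z-y)|\,\d z\lesssim\min(\|x-y\|/\sqrt r\,,1)$ from \cite{CJKS}, integrated against $r^{\alpha-1}\,\d r$, gives $\|x-y\|^{2\alpha}=\|x-y\|^{1-\beta}$.

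\textbf{Your route.} You apply BDG directly to the increment, so the kernel difference enters bilinearly against $\Lambda$. This forces several steps, all of which check out:
\begin{itemize}
\item the absolute values on the kernel differences;
\item the pointwise Gaussian domination $|q_s(x\,,y\,;\cdot)|\lesssim(h/s)^{1/2}p_{4s}(x-\cdot)$ for $s>h$, which your inequality $\|w'\|^2\ge\|w\|^2/2-s$ does deliver;
\item the split at the diffusive scale $s=\|x-y\|^2$;
\item the final Fourier estimate, where the logarithm for $h\|\xi\|^2<1$ is traded for $(h\|\xi\|^2)^{-\beta}$.
\end{itemize}

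\textbf{What each buys.} The factorization route needs only an $L^1$ estimate on heat-kernel differences, with no pointwise domination. The same representation also extends directly to time increments, which is how Sanz-Sol\'e and Sarr\`a use it. Your route avoids the stochastic Fubini theorem and the factorization identity entirely. It uses nothing beyond BDG, a gradient bound for the heat kernel, and positive definiteness of $\Lambda$. It also shows that nothing in the exponent $1-\beta$ is lost by discarding cancellations in $q_s$.
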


\begin{proof}
	Throughout this proof, we set
	\[
		\alpha = \frac{1-\beta}{2} \in(0\,,\tfrac12).
	\]
	We can now follow Sanz-Sol\'e and Sarr\`a \cite{SS1999,SS2000}
	improve their estimates by a little bit, and
	specialize them to the present setting. Define
	\[
		Y_\alpha(r\,,z) = \int_{(0,r)\times\R^d} 
		\frac{p_{r-s}(z-y)}{(r-s)^\alpha} \sigma\left(V^{(\theta)}(s\,,y)\right)
		\dot{W}(\d r\,\d y)\quad\forall r>0,z\in\R^d.
	\]
	Recall the stochastic integral process $I$ from \eqref{I}. 
	The stochastic Fubini theorem yields the following, which is basically 
	Sanz-Sol\'e and Sarr\`a's elegant reformulation of the factorization methods from 
	semigroup theory:
	\begin{equation}\label{factorization}
		I(t\,,x)
		= \frac{\sin(\pi\alpha)}{\pi}\int_0^t\d r\int_{\R^d}\d z\
		\frac{p_{t-r}(z-x)}{(t-r)^{1-\alpha}} Y_\alpha(r\,,z)
		\quad\forall t>0,x\in\R^d.
	\end{equation}
	Lemma \ref{lem:Lk:bdd} ensures that
	\[
		L_k=\sup_{t>0}\sup_{x\in\R^d}\|\sigma(V^{(\theta)}(t\,,x))\|_k<\infty.
	\]
	Therefore, in the case that $\Lambda$ is a function, 
	a suitable formulation of the BDG inequality \cite{CBMS}
	yields the following: For every $k\ge1$, $r>0$, and $z\in\R^d$,
	\begin{equation}\label{Y_alpha}\begin{split}
		&\|Y_\alpha(r\,,z)\|_k^2 \le 4kL_k^2\int_0^r\d s\int_{\R^d}\d z\int_{\R^d}\d w\
			\frac{p_r(z)p_r(w)}{r^{2\alpha}}\Lambda(w-z)\\
		&=4kL_k^2\int_0^r \<p_r\,,p_r*\Lambda\>_{L^2(\R^d)} \,\frac{\d r}{r^{2\alpha}}
			\le 4kL_k^2\int_{\R^d}\mu(\d\xi)\int_0^\infty\frac{\d r}{r^{2\alpha}}\
			\e^{-2r\|\xi\|^2}\\
		&=2^{1+2\alpha}kL_k^2\Gamma(1-2\alpha)
			\int_{\R^d} \frac{\mu(\d\xi)}{\|\xi\|^{2\beta}} = C_k^2.
	\end{split}\end{equation}
	Because of \eqref{cond:Lk:bdd}, 
	$\int_{\|\xi\|\le1}\|\xi\|^{-2}\,\mu(\d\xi)<\infty$ Therefore, we can use the
	fact that $0<\beta<1$ in order to deduce from \eqref{SSS} ensures 
	that $C_k<\infty$ in \eqref{Y_alpha}.
	Moreover, the same inequality \eqref{Y_alpha} holds when $\Lambda$ is a measure; 
	only, the first line in \eqref{Y_alpha}
	has to be rewritten as a convolution against the measure $\Lambda$
	when $\Lambda$ cannot be identified with a function; all else remains unchanged.
	Armed with the preceding, \eqref{factorization} and Minkowski's inequality together yield
	\begin{align*}
		\|I(t\,,x)-I(t\,,y)\|_k&\le \int_0^t\d r\int_{\R^d}\d z\
			\frac{|p_{t-r}(z-x)-p_{t-r}(z-y)|}{(t-r)^{1-\alpha}} \|Y_\alpha(r\,,z)\|_k\\
		&\le C_k\int_0^t\frac{\d r}{r^{1-\alpha}}\int_{\R^d}\d z\ |p_r(z-x)-p_r(z-y)|.
	\end{align*}
	Lemma 6.4 of \cite{CJKS} tells us that
	$\int_{\R^d}|p_r(v-a)-p_r(a)|\,\d a\lesssim \min(\|v\|/\sqrt{r}\,,1)$
	uniformly for all $r>0$ and $v\in\R^d$. Consequently, there exists $c>0$ such that
	\[
		\|I(t\,,x)-I(t\,,y)\|_k \le
		c C_k \|x-y\|^{2\alpha}\int_0^\infty\left(\frac{1}{\sqrt s}
		\wedge 1\right)\frac{\d s}{s^{1-\alpha}}
		\propto\|x-y\|^{2\alpha},
	\]
	uniformly for all $t>0$ and $x,y\in\R^d$. This concludes the proof of the lemma.
\end{proof}

\begin{lemma}\label{lem:cont:x:2}
	If \eqref{SSS} holds for some $\beta\in(0\,,1)$, and 
	\eqref{cond:Lk:bdd} is valid for some $k>d/(1-\beta)$, then 
	for every fixed positive $\varepsilon < 1-\beta-(d/k)$,
	\[
		\sup_{t>0}
		\E\left( \sup_{\substack{x,y\in B(0,r)\\ x\neq y}}
		\frac{|V^{(\theta)}(t\,,x)-V^{(\theta)}(t\,,y)|^k}{
		\|x-y\|^{\varepsilon k}}\right) <\infty\qquad\forall r>0.
	\]
\end{lemma}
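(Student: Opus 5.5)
The plan is to apply the Garsia–Rodemich–Rumsey inequality (equivalently, a quantitative Kolmogorov continuity theorem) to the random field $x\mapsto V^{(\theta)}(t\,,x)$ on the ball $B(0\,,r)$. The moment bound it needs is Lemma \ref{lem:cont:x:1}. The key point is that the constant in that lemma is uniform in $t>0$, so every constant produced along the way will also be uniform in $t$, and we may take $\sup_{t>0}$ at the end.

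\emph{Step 1: moment increment.} Fix $t>0$. By Lemma \ref{lem:cont:x:1}, which applies since \eqref{SSS} and \eqref{cond:Lk:bdd} hold,
\[
	\E\left(|V^{(\theta)}(t\,,x)-V^{(\theta)}(t\,,y)|^k\right)\le C\|x-y\|^{k(1-\beta)},
\]
where $C$ does not depend on $(t\,,x\,,y)$. Write $k(1-\beta)=d+\gamma$ with $\gamma=k(1-\beta)-d>0$; this is exactly where the hypothesis $k>d/(1-\beta)$ enters.

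\emph{Step 2: GRR.} Apply the Garsia–Rodemich–Rumsey lemma on the cube (or ball) $B(0\,,r)$ with $\Psi(u)=u^k$ and $p(u)=u^{(2d+\varepsilon k)/k}$. The standard consequence is that, for any $\varepsilon<\gamma/k=1-\beta-d/k$,
\[
	\sup_{\substack{x,y\in B(0,r)\\ x\neq y}}\frac{|V(t\,,x)-V(t\,,y)|^k}{\|x-y\|^{\varepsilon k}}
	\le c_{r,\varepsilon,k,d}\iint_{B(0,r)^2}\frac{|V(t\,,x)-V(t\,,y)|^k}{\|x-y\|^{2d+\varepsilon k}}\,\d x\,\d y.
\]
Taking expectations and using Tonelli together with Step 1, the right-hand side is at most a constant times
\[
	\iint_{B(0,r)^2}\|x-y\|^{k(1-\beta)-2d-\varepsilon k}\,\d x\,\d y.
\]
This integral is finite precisely because $k(1-\beta)-\varepsilon k-d>0$, and the resulting bound is independent of $t$. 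Taking $\sup_{t>0}$ completes the argument.

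\emph{Main obstacle: measurability.} GRR requires a continuous field, while $V^{(\theta)}(t)$ is a priori only jointly measurable. To handle this, I would first work with the continuous modification provided by Kolmogorov's theorem for each fixed $t$, which Step 1 makes available; alternatively one can apply GRR to the mollified fields $p_\delta*V^{(\theta)}(t)$ and let $\delta\to0$ via Fatou. Either way, the supremum inside the expectation should be understood for this continuous version. By Lemma \ref{lem:L2_cont}, that version agrees with the original field on $\Q^d$, so the statement is unambiguous. Beyond this point the argument is routine bookkeeping.
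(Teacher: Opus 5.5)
Your proposal is correct and is essentially the paper's proof. The paper applies the Garsia--Rodemich--Rumsey-type bound (Proposition A.1 of \cite{DKN}) on $B(0\,,r)$ with $\Psi(a)=|a|^k$ and $p(a)=|a|^\gamma$, $2d/k<\gamma<1-\beta+d/k$, which is your choice with $\gamma=2d/k+\varepsilon$, bounds the expected double integral uniformly in $t$ via Lemma \ref{lem:cont:x:1}, and, like you, states the conclusion for a modification of $V^{(\theta)}(t)$ at each fixed $t$.
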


\begin{proof}
	We appeal to a suitable form of the Kolmogorov continuity theorem,
	as described for example in Proposition A.1 of \cite{DKN}. In order to do that,
	let us first choose and fix a number $\gamma>0$ that satisfies the following:
	\[
		\frac{2d}{k} < \gamma < 1-\beta + \frac{d}{k}.
	\]
	Then we apply Proposition A.1 of \cite{DKN} with the following choice of
	parameters (in the language of that proposition):
	$S=B(0\,,r)$, $f(x)=V^{(\theta)}(t\,,x)$, $\mu=$the Lebesgue measure
	on $B(0\,,r)$, $\varrho(x)=\|x\|$ for all $x\in\R^d$, $\Psi(a)=|a|^k$ for all $a\in\R$,
	and $p(a)=|a|^\gamma$ for all $a\in\R$. That proposition yields 
	the random variables 
	\[
		\mathscr{C}_t = \iint\limits_{B(0,r)\times B(0,r)}
		\frac{|V^{(\theta)}(t\,,x)-V^{(\theta)}(t\,,y)|^k}{%
		\|x-y\|^{\gamma k}}\,\d x\,\d y\qquad\forall t>0,
	\]
	that are finite a.s. In fact, Lemma \ref{lem:cont:x:1} implies that
	\[
		\sup_{t>0}\E\mathscr{C}_t \lesssim\iint\limits_{B(0,r)\times B(0,r)}
		\frac{\d x\,\d y}{\|x-y\|^{k(\gamma-1+\beta)}}<\infty.
	\]
	The conclusion of Proposition A.1 of \cite{DKN} then is that (for every $t>0$ fixed),
	$V^{(\theta)}(t)$ has a modification
	that satisfies the following with probability one: Simultaneously for all
	$x,y\in B(0\,,r)$,
	\[
		|V^{(\theta)}(t\,,x) - V^{(\theta)}(t\,,y)| \lesssim\int_0^{2\|x-y\|}
		\frac{\mathscr{C}^{1/k}}{|B(0\,,u/2)|^{2/k}} u^{\gamma-1}\,\d u
		\propto \mathscr{C}^{1/k}\|x-y\|^{\gamma-(2d/k)},
	\]
	where the implies constant is non random and finite, and does not depend on $(t\,,x\,,y)$.
	This implies the lemma with $\varepsilon = \gamma - (2d/k)$.
\end{proof}

\begin{proof}[Proof of Theorem \ref{th:Holder}]
	If \eqref{cond:Lk} holds, then we can choose
	and fix a real number $k>d/(1-\beta)$ that satisfies
	\eqref{cond:Lk:bdd}. In accord with Lemma \ref{lem:cont:x:2},
	\eqref{V:X_theta}, and Fatou's lemma, for every $\varepsilon < 1-\beta-(d/k)$
	and $r>0$, $X_\theta$ satisfies
	\begin{align*}
		&\E\left( \sup_{\substack{x,y\in B(0,r)\\ x\neq y}}
			\frac{|X_\theta(x)-X_\theta(y)|^k}{
			\|x-y\|^{\varepsilon k}}\right) \\
		&\qquad\le\liminf_{t\to\infty}
			\E\left( \sup_{\substack{x,y\in B(0,r)\\ x\neq y}}
			\frac{|V^{(\theta)}(t\,,x)-V^{(\theta)}(t\,,y)|^k}{
			\|x-y\|^{\varepsilon k}}\right) <\infty,
	\end{align*}
	up to a modification.
	This and a suitable version of the Kolmogorov continuity theorem
	together imply that $\P\{X_\theta\in C^\varepsilon_{\textit{loc}}(\R^d)\}=1$,
	and completes our demonstration.
\end{proof}

\section{The constant coefficient case}\label{sec:Gaussian}
In this section we focus on \eqref{SPDE} in the special case
that $\sigma$ is identically a constant $c_0\neq0$; that is,
\[
	\sigma(x)=c_0\qquad\forall x\in\R^d.
\]
Choose and fix an arbitrary $\theta\in\R$ and note that 
the solution $V^{(\theta)}$ to \eqref{SPDE} specializes
to the following:
$V^{(\theta)}(t\,,x) = \theta + I(t\,,x),$
where
\[
	I(t\,,x) = c_0\iint_{(0,t)\times\R^d}
	p_{t-s}(y-x)\,\eta(\d s\,\d y)\qquad\forall t>0,
	\ x\in\R^d.
\]
Compare with \eqref{v:theta}. Since $V^{(\theta)}$ is a
mean-$\theta$ Gaussian random field, so is $X_\theta$.
Moreover, \eqref{V:X_theta} and the elementary properties
of the metric $\rho$ in \eqref{rho} together imply that
for every $x,y\in\R^d$,
\begin{equation}\label{Cov:Gauss}\begin{split}
	&\Cov[X_\theta(x)\,,X_\theta(y)] = 
		\lim_{t\to\infty}\Cov[V^{(\theta)}(t\,,x)\,,V^{(\theta)}(t\,,y)]\\
	&= \lim_{t\to\infty}
		\E\left[I(t\,,x)I(t\,,y)\right] -\theta^2 =
		c_0^2\int_0^\infty (p_{2s}*\Lambda)( x-y)\,\d s - \theta^2\\
	&=c_0^2\iint_{\R_+\times\R^d}\e^{-i(x-y)\cdot z -2s\|z\|^2}\,\d s\,\mu(\d z)
		- \theta^2\\
	&=\frac{c_0^2}{2}\int_{\R^d} \frac{\cos((x-y)\cdot z)}{\|z\|^2} \,\mu(\d z)
		- \theta^2.
\end{split}\end{equation}
This yields most of the following. Recall that 
$\int_{\R^d}\frac{\mu(\d z)}{\|z\|^2}<\infty$; see \eqref{cond:Lk}.

\begin{corollary}\label{cor:Gauss}
	Suppose that $\sigma\equiv c_0\neq0$. Then, for every $\theta\in\R$,
	the finite-dimensional distributions of $\nu_\theta$ describe
	a spatially stationary Gaussian random field $X_\theta$ with mean $\theta$ and
	covariance function given by \eqref{Cov:Gauss}.
	The random field $X_\theta$ is a.s.~locally H\"older continuous,
	equivalently $\nu_\theta(C^\varepsilon_{\textit{loc}}(\R^d))=1$
	for some $\varepsilon>0$,  iff \eqref{SSS} holds.
\end{corollary}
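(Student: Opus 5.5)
We would read off the Gaussian structure of $X_\theta$ from its construction, and then treat the H\"older question as a statement about one spectral measure.

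\emph{Gaussianity, stationarity, covariance.} When $\sigma\equiv c_0$, the field $\cev{V}^{(\theta)}_t(t\,,\cdot)=\theta+c_0\int p_{t-s}(\cdot-y)\,\cev\eta_t(\d s\,\d y)$ is a Wiener integral. It is therefore a centered-at-$\theta$ Gaussian field. By \eqref{X_theta} it converges to $X_\theta$ in $\U0$, hence pointwise in $L^2(\Omega)$. Finite-dimensional $L^2$-limits of Gaussian vectors are Gaussian, so $X_\theta$ is Gaussian with mean $\theta$. Its covariance is the limit of the covariances, which is the computation \eqref{Cov:Gauss}. Spatial stationarity is Theorem~\ref{th:main}(b). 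Equivalently, $X_\theta$ is a stationary Gaussian field with spectral measure
\[
	\tau(\d z)=\frac{c_0^2}{2}\,\frac{\mu(\d z)}{\|z\|^2}.
\]
By \eqref{COND:WN}, $\tau$ is a finite measure.

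\emph{Sufficiency of \eqref{SSS}.} With $h=x-y$ we have
\[
	\E|X_\theta(x)-X_\theta(y)|^2 = c_0^2\int_{\R^d}\frac{1-\cos(h\cdot z)}{\|z\|^2}\,\mu(\d z).
\]
We would use the bound $1-\cos(h\cdot z)\le 2\min(1,\|h\|^2\|z\|^2)\le 2(\|h\|\|z\|)^{2(1-\beta)}$. This turns the increment variance into at most $2c_0^2\|h\|^{2(1-\beta)}\int\|z\|^{-2\beta}\mu(\d z)$. The last integral is finite: on $\{\|z\|\le1\}$ it is dominated by $\int\|z\|^{-2}\mu(\d z)<\infty$, and on $\{\|z\|>1\}$ it is finite by \eqref{SSS}. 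Since the field is Gaussian, $L^k(\Omega)$ norms of increments are multiples of the $L^2$ norm, so $\E|X_\theta(x)-X_\theta(y)|^k\lesssim\|h\|^{k(1-\beta)}$ for every $k$. The Kolmogorov continuity theorem, used as in Lemma~\ref{lem:cont:x:2}, then gives a locally $\varepsilon$-H\"older modification for every $\varepsilon<1-\beta$. Note that we do not need \eqref{cond:Lk} here, because Gaussian moments are automatic.

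\emph{Necessity (the main obstacle).} Suppose $X_\theta\in C^\varepsilon_{\textit{loc}}$ a.s. Apply Fernique's theorem to the Gaussian random variable $\sup_{x\ne y\in B(0,1)}|X_\theta(x)-X_\theta(y)|/\|x-y\|^\varepsilon$; it is a.s.\ finite, hence square integrable. This yields $\int(1-\cos(h\cdot z))\,\tau(\d z)\le C\|h\|^{2\varepsilon}$ for all $\|h\|\le1$.

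Next we average this over $h$ uniformly distributed on the sphere of radius $r$. The averaged kernel $1-\widehat{\mathrm{unif}}_{S}(rz)$ is bounded below by a positive constant when $\|z\|\ge 1/r$. This gives the tail bound $\tau\{\|z\|>R\}\lesssim R^{-2\varepsilon}$. A layer-cake summation over dyadic shells then shows $\int_{\|z\|>1}\|z\|^{2\varepsilon'}\tau(\d z)<\infty$ for every $\varepsilon'<\varepsilon$. Rewriting in terms of $\mu$, this says $\int_{\|z\|>1}\mu(\d z)/\|z\|^{2(1-\varepsilon')}<\infty$. Together with finiteness near the origin, this is \eqref{SSS} with $\beta=1-\varepsilon'\in(0\,,1)$.

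The delicate points are two: justifying the use of Fernique (the field is a.s.\ H\"older on a separable modification), and the spherical-averaging lower bound on the kernel.
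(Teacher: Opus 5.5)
Your proposal is correct. For the Gaussian structure it matches the paper. Incidentally, the ``$-\theta^2$'' in \eqref{Cov:Gauss} is spurious because $I$ is centered, and your spectral measure $\tau$ gives the right covariance. For sufficiency the paper simply cites Theorem~\ref{th:Holder}, which applies because \eqref{cond:Lk} holds trivially when $\lip_\sigma=0$. Your direct Gaussian Kolmogorov argument is a self-contained substitute that also avoids the Fatou/weak-limit step in that theorem's proof.

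The genuine difference is in necessity. The paper argues contrapositively. If \eqref{SSS} fails, it integrates the increment variance against $\|x\|^{-d-2+2\nu}\,\d x$ and uses $\int(1-\cos(\xi\cdot x))\|x\|^{-d-2+2\nu}\,\d x\propto\|\xi\|^{2-2\nu}$ to get \eqref{contra} and then \eqref{no:Kolm}. Paley--Zygmund, with the Gaussian fourth-moment identity, then makes the H\"older quotient infinite with probability at least $1/12$. You argue forward instead: a.s.\ H\"older continuity gives $\E|X_\theta(h)-X_\theta(0)|^2\lesssim\|h\|^{2\varepsilon}$. Spherical averaging plus dyadic summation turn this into a tail bound for $\tau$, hence \eqref{SSS} for every $\beta\in(1-\varepsilon\,,1)$.

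Both routes encode the same exponent relation: run with a single fixed $\nu$, the paper's argument also shows that $\alpha$-H\"older continuity forces \eqref{SSS} for all $\beta>1-\alpha$. The paper's route costs an explicit Riesz-kernel identity. Yours costs a positive lower bound on $1-\widehat{\mathrm{m}}(w)$ for $\|w\|\ge1$, where $\mathrm{m}$ is normalized surface measure on $S^{d-1}$. That bound holds because $d\ge3$ is forced by \eqref{COND:WN}; in $d=1$ it fails, and you would average over an interval instead.

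Finally, Fernique is more than you need. If a countable family of centered Gaussians is a.s.\ bounded, then $\P\{|G|\le M\}\le 2M/\sqrt{2\pi\Var G}$ forces the variances to be bounded. That already gives your second-moment bound, and it is essentially the paper's Paley--Zygmund step read backwards.
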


\begin{proof}[Proof of Corollary \ref{cor:Gauss}]
	We have seen already that every $X_\theta$ is a Gaussian process
	with mean function $\equiv\theta$ and covariance
	given by \eqref{Cov:Gauss}.
	
	Condition  \eqref{SSS} implies that $X_\theta$ is H\"older continuous;
	see Theorem \ref{th:Holder}.  It remains to prove that if \eqref{SSS}
	fails to hold then $X_\theta$ is a.s.~not H\"older continuous. Therefore,
	we assume from now on that \eqref{SSS} fails
	to hold for every $\nu\in(0\,,1)$. In light of \eqref{cond:Dalang},
	this means that
	\[
		\int_{\R^d}\frac{\mu(\d\xi)}{\|\xi\|^{2\nu}}=\infty\qquad\forall
		\nu\in(0\,,1).
	\]
	
	Eq.~\eqref{Cov:Gauss} shows that, for all $x\in\R^d$ and $\theta\in\R$,
	\[
		\E\left( |X_\theta(x)-X_\theta(0)|^2\right)
		= c_0^2\int_{\R^d}\frac{1-\cos(\xi\cdot x)}{\|\xi\|^2}\,\mu(\d\xi).
	\]
	Therefore, the spatial stationarity of the random field $X_\theta$ implies that
	whenever $\nu\in(0\,,1)$, 
	\begin{align*}
		&\int_{\R^d} \E\left( |X_\theta(x)-X_\theta(0)|^2\right)\,
			\frac{\d x}{\|x\|^{d+2-2\nu}}\\
		&\qquad=\frac{c_0^2}{2}\int_{\R^d}\frac{\mu(\d\xi)}{\|\xi\|^2}\int_{\R^d}
			\d x\ \frac{1-\cos(\xi\cdot x)}{\|x\|^{d+2-2\nu}} \propto
			\int_{\R^d}\frac{\mu(\d\xi)}{\|\xi\|^{2\nu}}=\infty.
	\end{align*} 
	Because $X_\theta\in\U0$ [see \eqref{X_theta}],
	the preceding expectation is bounded uniformly in $x$. Therefore,
	\[
		\int_{B(0,1)} \E\left( |X_\theta(x)-X_\theta(0)|^2\right)\,
		\frac{\d x}{\|x\|^{d+2-2\nu}} = \infty\quad\forall \nu\in(0\,,1),
	\]
	regardless of how close $\nu$ is to $0$.
	Since $X_\theta$ is spatially stationary, it follows that for all $\nu
\in(0\,,1)$, 	as close to $0$ as we wish,
	\begin{equation}\label{contra}
		\int_{B(0,1)} \sup_{\|x\|/2<\|b-a\|\le\|x\|}\E\left( 
		|X_\theta(b)-X_\theta(a)|^2\right)\frac{\d x}{\|x\|^{d+2-2\nu}}=\infty.
	\end{equation}
	This proves that for every $\alpha\in(1-\nu\,,1)$,

	\begin{equation}\label{no:Kolm}
		\limsup_{\|x\|\to0^+} \frac{1}{\|x\|^{2\alpha}}
		\sup_{\|x\|/2<\|b-a\|\le\|x\|}
		\E\left(  |X_\theta(b)-X_\theta(a)|^2\right)=\infty,
	\end{equation}
	for otherwise $\mathscr{L}_\alpha=\sup_{\|x\|<1}
	\|x\|^{-2\alpha}\sup_{\|x\|/2<\|b-a\|\le\|x\|}\,\E(|\ldots|^2)<\infty$ whence
	\begin{align*}
		&\int_{B(0,1)} \sup_{\|x\|/2<\|b-a\|\le\|x\|}\E\left( 
			|X_\theta(b)-X_\theta(a)|^2\right)\frac{\d x}{\|x\|^{d+2-2\nu}}\\
		&\hskip2in\le \mathscr{L}_\alpha\int_{B(0,1)}\frac{\d x}{\|x\|^{d+2-2\nu-2\alpha}}
			<\infty,
	\end{align*}
	which would then contradict \eqref{contra}.  Because $\nu\in(0\,,1)$
	is arbitrary, we can see that indeed
	\eqref{no:Kolm} is valid for every $\alpha\in(0\,,1)$. We can deduce
	from \eqref{no:Kolm} that
	\[
		\limsup_{\|b-a\|\to0^+}
		\E\left(  \frac{|X_\theta(b)-X_\theta(a)|^2}{\|b-a\|^{2\alpha}}\right)=\infty,
	\]
	Because $X_\theta(b)-X_\theta(a)$ has a Gaussian distribution,
	\[
		\E\left( |X_\theta(b)-X_\theta(a)|^4\right)
		=3\E\left(|X_\theta(b)-X_\theta(a)|^2\right)^2
		\qquad\forall a,b\in\R^d.
	\]
	Therefore, the Paley-Zygmund inequality yields
	\[
		\P\left\{ |X_\theta(b)-X_\theta(a)|^2
		\ge \frac12 \E\left( |X_\theta(b)-X_\theta(a)|^2\right)\right\}
		\ge\frac{1}{12},
	\]
	uniformly for all $a,b\in\R^d$. It follows the preceding that,
	for every $\alpha\in(0\,,1)$, however small,
	\[
		\P\left\{ \limsup_{\|b-a\|\downarrow0}
		\frac{|X_\theta(b)-X_\theta(a)|}{\|b-a\|^\alpha}=\infty\right\}\ge\frac{1}{12},
	\]
	and so $\nu_\theta(C_{\textit{loc}}^\alpha(\R^d))<1$ for
	every $\alpha\in(0\,,1)$.
\end{proof}

\section{The Parabolic Anderson Model}\label{sec:PAM}
Corollary \ref{cor:Gauss} and Theorem \ref{th:main} together identify
the class of all annealed, ergodic, invariant measures $\{\nu_\theta\}_{\theta\in\R}$
of \eqref{SPDE} when $\sigma$ is identically a constant. In particular, it follows that every $\nu_\theta$ is a Gaussian measure when $\sigma$ is a constant. 
Next we present a concrete Wiener chaos description of samples from the $\nu_\theta$s 
in the case that $\sigma$ is linear. For the remainder of this section we assume
that 
\[
	\sigma(x) = c_0x\qquad\forall x\in\R,
\]
where $c_0\neq0$ is a fixed constant. In the language of Carmona and
Molchanov \cite{CM}, the \eqref{SPDE} in the present linear case
is the \emph{parabolic Anderson model} for the noise $\eta$.

\begin{theorem}\label{th:PAM} 
	Fix an arbitrary $\theta\in\R$. Then, $\nu_\theta$ is the weak 
    limit as $t\to\infty$ of the following $L^2(\Omega)$-convergent
	Wiener chaos representation:
    \begin{align*}
		 &\theta + \theta\sum_{n=1}^{\infty} 
            c_0^n\int_{(0,t)\times \R^d}
			\int_{(0,t_1) \times \R^d} \cdots \int_{(0,t_{n-1}) \times \R^d}\\
		&\qquad p_{t,t_1, t_2,\ldots, t_n}(x,\, y_1,\ldots, y_n)\,
			\eta( \d t_n \, \d y_n) \, \eta(\d t_{n-1}\, \d y_{n-1}) \cdots \eta(\d t_1 \, \d y_1),
	\end{align*} 
	where $p_{t,t_1, t_2,\ldots, t_n}(x,\, y_1,\ldots, y_n)
	= \prod_{i=1}^n p_{t_{i-1}-t_{i}}(y_{i-1}-y_i)$ with $t_0=t$ and $y_0=x$. 
\end{theorem}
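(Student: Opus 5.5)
The plan is to identify the displayed series with $V^{(\theta)}(t\,,x)$, the solution of \eqref{v:theta} with $\sigma(z)=c_0z$. The weak convergence to $\nu_\theta$ then follows directly from \eqref{V:X_theta} together with Remark \ref{rem:W2}, which says that $\rho$-convergence implies weak convergence on $(\bS_d\,,\sS_d)$.

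\textbf{Step 1 (chaos expansion at fixed $t$).} Fix $t>0$ and run the Picard iteration for \eqref{v:theta}. Set $V_0\equiv\theta$ and
\[
	V_{m+1}(s\,,x)=\theta+c_0\int_{(0,s)\times\R^d}p_{s-r}(y-x)V_m(r\,,y)\,\eta(\d r\,\d y).
\]
By induction, $V_m(t\,,x)$ is exactly the partial sum of the displayed series up to $n=m$. Each term is obtained by substituting the previous iterate and applying the stochastic Fubini theorem to rearrange the iterated Walsh integrals. The symmetry of $p$ lets us write $p_{t_{i-1}-t_i}(y_{i-1}-y_i)$ in place of $p_{t_{i-1}-t_i}(y_i-y_{i-1})$.

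Dalang's theory (Proposition \ref{pr:Dalang}) shows that the iterates converge in $L^2(\Omega)$ to $V^{(\theta)}(t\,,x)$. Because of the weak-noise condition, we can do better and avoid the local-in-time Gronwall argument. The $n$-th chaos term $J_n(t\,,x)$ satisfies
\[
	\E|J_n(t\,,x)|^2=\theta^2c_0^{2n}\int_{0<t_n<\cdots<t_1<t}\prod_{i}\<\cdots\>,
\]
and by the same recursive bound as in the proofs of Lemma \ref{lem:bdd:L2} and Proposition \ref{pr:duality}, this is at most $\theta^2\bigl(c_0^2\mathscr{E}\bigr)^n$. Here $\mathscr{E}$ is defined in \eqref{mathscr(E)}, and $c_0^2\mathscr{E}<1$ by \eqref{COND:WN}. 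The terms lie in orthogonal Wiener chaoses (their integrands are deterministic), so the series converges in $L^2(\Omega)$ uniformly in $(t\,,x)$, and its sum is $V^{(\theta)}(t\,,x)$.

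\textbf{Step 2 (joint measurability).} We need the series to define an element of $\U0$, so that it coincides with $V^{(\theta)}(t)$ as a random field and hence as an $\bS_d$-valued variable. Each chaos term admits a jointly measurable version, since it is an $L^2(\Omega)$-continuous function of $x$ by the argument of Lemma \ref{lem:L2_cont}. The partial sums converge in $\|\cdot\|_{\U0}$, and $\U0$ is a Banach space, so the limit is in $\U0$ and agrees with $V^{(\theta)}(t)$ up to modification.

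\textbf{Step 3 (conclusion).} By \eqref{V:X_theta}, $\rho(V^{(\theta)}(t)\,,X_\theta)\to0$ as $t\to\infty$. Remark \ref{rem:W2} then shows that the law of the series converges weakly to $\nu_\theta$.

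The main obstacle is the bookkeeping in Step 1 when $\Lambda$ is a measure. The second moment of the $n$-th chaos must be expressed as an iterated integral against $\Lambda^{\otimes n}$, and each factor must be bounded by $\int_0^\infty\<p_s\,,p_s*\Lambda\>\,\d s$ uniformly in the outer variables. This is done by using Fourier variables with $|\e^{i\xi\cdot a}|\le1$ and bounding $\e^{-2(t_{i-1}-t_i)\|\xi\|^2}$ one integration at a time.
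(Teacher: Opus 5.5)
Your proof is correct, but it is organized differently from the paper's.

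The paper works with the time-reversed field $\cev{V}^{(\theta)}_t(t)$. It iterates \eqref{eq:pam:constant} $n$ times and keeps a remainder $R_n(t,x)$ that still contains the solution. It then shows this remainder vanishes uniformly in $t$ (Lemma \ref{lem:E(R_n)}), using the uniform moment bound of Lemma \ref{lem:bdd:L2}. Together with $\cev{V}^{(\theta)}_t(t)\to X_\theta$ in $\U0$ from \eqref{X_theta}, this makes the $\cev{\eta}_t$-driven series converge in $L^2(\Omega)$, not merely in law, to a field with law $\nu_\theta$. The displayed $\eta$-form is recovered only at the end, by equality of laws.

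You keep the forward noise, exactly as displayed, and estimate the chaos terms themselves. Since these have deterministic kernels, you need only the positive-definiteness recursion and orthogonality of chaoses, with no moment bound on the solution. You then identify the sum with $V^{(\theta)}(t)$ via the Picard iterates, and let \eqref{V:X_theta} and Remark \ref{rem:W2} finish. This is lighter and yields precisely the weak convergence claimed. It does not give the paper's strong $L^2$ realization of $X_\theta$, because outside trivial cases the forward series is not Cauchy in $L^2(\Omega)$ as $t\to\infty$.

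Your geometric bound $\theta^2(c_0^2\mathscr{E})^n$ is also the right one. After the positive-definiteness step, the time integrand depends only on the increments $t_{i-1}-t_i$. In increment variables the simplex becomes $\{s_1+\cdots+s_n<t\}$, and the integral increases to the full product as $t\to\infty$. So no factorial gain is available: the $1/(n+1)!$ in the proof of Lemma \ref{lem:E(R_n)} is unjustified, though the geometric bound still proves that lemma. It is exactly the strict inequality $c_0^2\mathscr{E}<1$ in \eqref{COND:WN} that makes the series converge.
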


In light of \eqref{V:X_theta} and the elementary properties of the metric
$\rho$ [see \eqref{rho}], it suffices to prove that 
$\cev V_t^{(\theta)}(t)\to X_\theta$ in $\U0$ as $t\to\infty$. Choose and
fix a $t>0$. According to \eqref{mild:t}, we may write
\begin{equation}\label{eq:pam:constant}\begin{split}
	\cev V^{(\theta)}_t(t\,,x) &= \theta + c_0
	\int_{(0,t)\times \R^d} p_{t-s}(x-y)\, \cev V_t^{(\theta)}(s\,,y)  \,\cev \eta_t(\d s \,\d y).
\end{split}\end{equation}

Thus, we may expand the right-hand side in order to see that
\begin{align*}
	& \cev V_t^{(\theta)}(t\,,x)
		= \theta  +\theta c_0\int_{(0,t)\times \R^d} p_{t-s}(y-x)  \,\cev \eta_t(\d s\, \d y) + \\
	& + c_0^2\int_{(0,t)\times \R^d} \int_{(0,s)\times \R^d}p_{t-s}(y-x)\, 
		p_{s-r}(z-y)\, \cev V_t^{(\theta)}(r,z)  \, \cev \eta_t(\d r\,\d z)\,\cev \eta_t(\d s\, \d y) .
\end{align*}
Continue to expand, using \eqref{eq:pam:constant}, in order to obtain
\begin{align*}
	&\cev V_t^{(\theta)}(t,\,x)
		= \theta  +\theta c_0 \int_{(0,t)\times \R^d} p_{t-s}(y-x)  \,\cev\eta_t(\d s\, \d y)+  \\
	&\quad + \theta c_0^2\int_{(0,t)\times \R^d} \int_{(0,s)\times \R^d}
		p_{t-s}(y-x)\, p_{s-r}(z-y)\,\cev\eta_t(\d r\,\d z)\,\cev\eta_t(\d s\, \d y) +\\
	&\quad +\cdots +\theta c_0^n\int_{(0,t)\times \R^d} 
		\int_{(0,t_1)\times \R^d}\cdots \int_{(0,t_{n-1})\times \R^d}p_{t-t_1}(y_1-x)\\
	 &\hskip.2in\times\left[\prod_{j=n-1}^{1} p_{t_{j}-t_{j+1}}(y_{j+1}-y_j) \,
	 	\cev\eta_t(\d t_{j+1}\,\d y_{j+1}) \right]\, \cev\eta_t( \d t_1\, \d y_1)  + R_n(t\,,x),
\end{align*}
where
\begin{align}\label{eq:rem:n+1}
	R_n(t\,,x)&= c_0^{n+1}\int_{(0,t)\times \R^d} \int_{(0,t_1)\times \R^d}\cdots 
		\int_{(0,t_n)\times \R^d}p_{t-t_1}(y_1-x)\\\notag
        &\quad \times \cev{V}^{(\theta)}_t(t_{n+1},\,y_{n+1})\,p_{t_{n}-t_{n+1}}(y_{n+1}-y_n)\,\cev\eta_t(\d t_{n+1}\,\d y_{n+1}) \\ \notag
	 &\quad\times\left[\prod_{j=n-1}^{1} p_{t_{j}-t_{j+1}}(y_{j+1}-y_j) \,
	 	\cev\eta_t(\d t_{j+1}\,\d y_{j+1}) \right] \, \cev\eta_t( \d t_1\, \d y_1).\notag
\end{align}

The proof of Theorem \ref{th:PAM} essentially boils down to 
proving that the above remainder term $R_n(t\,,x)$ goes to $0$ in 
$L^2(\Omega)$ uniformly in $(t\,,x)\in\R_+\times\R^d$ as $n\to \infty$. We do that next.  Recall
that \eqref{COND:WN} is in place. 

\begin{lemma}\label{lem:E(R_n)}
	$\lim_{n\to\infty}\sup_{t>0} \|R_n(t)\|_{\U0}=0$.
\end{lemma}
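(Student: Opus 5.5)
We bound $\E(|R_n(t\,,x)|^2)$ by iterating the Itô isometry $n+1$ times. The aim is a geometric bound of the form $\mathbf{C}\,\delta^{n+1}$, where
\[
	\delta = \frac{c_0^2}{2}\int_{\R^d}\frac{\mu(\d\xi)}{\|\xi\|^2}<1
\]
by \eqref{COND:WN}, since $\lip_\sigma=|c_0|$ in the present setting. The bound will be uniform in $(t\,,x)$, which yields the lemma.

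\textbf{Recursive structure of $R_n$.} Define
\[
	\mathcal{R}_n(s\,,x) = c_0\int_{(0,s)\times\R^d}p_{s-r}(y-x)\,\mathcal{R}_{n-1}(r\,,y)\,\cev\eta_t(\d r\,\d y),
	\qquad
	\mathcal{R}_0(s\,,x) = c_0\int_{(0,s)\times\R^d}p_{s-r}(y-x)\,\cev V_t^{(\theta)}(r\,,y)\,\cev\eta_t(\d r\,\d y).
\]
Then $R_n(t\,,x)=\mathcal{R}_n(t\,,x)$. Each integrand is predictable with respect to the filtration of $\cev\eta_t$, because the inner integrals only involve times smaller than the outer time variable. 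The Itô isometry therefore applies at each level.

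\textbf{One step of the recursion.} Set $\phi_n(s)=\sup_{x}\E(|\mathcal{R}_n(s\,,x)|^2)$. First treat $\Lambda$ as a function; the measure case is the usual convolution rewriting. Applying Cauchy–Schwarz to $\E[\mathcal{R}_{n-1}(r\,,y)\mathcal{R}_{n-1}(r\,,z)]$ gives
\[
	\phi_n(s)\le c_0^2\int_0^s (p_{2(s-r)}*\Lambda)(0)\,\phi_{n-1}(r)\,\d r .
\]
This uses exactly the computation in \eqref{mathscr(E)} and Lemma \ref{lem:stability}.

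\textbf{Base case and iteration.} For the base case, \eqref{duals} and Lemma \ref{lem:bdd:L2} give $\sup_{r,y}\E(|\cev V_t^{(\theta)}(r\,,y)|^2)\le \mathbf{C}$, where $\mathbf{C}$ does not depend on $t$. Hence
\[
	\phi_0(s)\le \mathbf{C}\,c_0^2\int_0^\infty (p_{2r}*\Lambda)(0)\,\d r=\mathbf{C}\delta .
\]
Taking $\sup_{s\le t}$ in the one-step bound gives $\sup_{s\le t}\phi_n(s)\le \delta\sup_{s\le t}\phi_{n-1}(s)$. By induction,
\[
	\sup_{t>0}\|R_n(t)\|_{\U0}^2\le \mathbf{C}\delta^{n+1}\to0 .
\]

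\textbf{Expected obstacle.} The analysis is routine; the main point of care is legitimacy. The iterated stochastic integrals against $\cev\eta_t$ must be well-defined Walsh integrals with predictable integrands, and the expansion identity producing $R_n$ must hold in $L^2(\Omega)$. Both follow by induction from the fact that each $\mathcal{R}_{n-1}(r\,,\cdot)$ is adapted and $L^2$-bounded, which the same estimate supplies. Uniformity in $t$ holds because $\mathbf{C}$ and $\delta$ are independent of $t$.
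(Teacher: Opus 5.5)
Your proof is correct and follows essentially the paper's route: both iterate the It\^o isometry and extract a factor $c_0^2\int_0^\infty(p_{2s}*\Lambda)(0)\,\d s=\delta$ per level. The paper writes out one unrolled deterministic kernel, bounding only the innermost correlation by a constant, and peels off levels with Bochner's theorem, whereas you bound $\E[\mathcal{R}_{n-1}(r\,,y)\mathcal{R}_{n-1}(r\,,z)]$ by Cauchy--Schwarz at each step. Your geometric bound $\mathbf{C}\delta^{n+1}$ is exactly what this computation gives uniformly in $t$, and the extra $1/(n+1)!$ in the paper's final display is not justified: in the variables $s_i=t_i-t_{i+1}$ the domain $\{s_i>0,\ \sum_i s_i<t\}$ fills the whole orthant as $t\to\infty$, so $\delta<1$ from \eqref{COND:WN} is genuinely what makes the remainder vanish.
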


\begin{proof}
	We will prove Lemma \ref{lem:E(R_n)} 
	in the case that $\Lambda$ is a function. The more general
	case, wherein $\Lambda$ is a measure, is proved similarly
	but the proof needs to be written out in terms of long convolutions
	against $\Lambda$,
	and is left to the interested reader. In this case,
	\begin{align}\label{eq:rem}
		&\E\left[R_n(t\,,x)^2\right] = c_0^{2(n+1)} \int_{(0,t)\times \R^d\times \R^d}
			\int_{(0,t_1)\times \R^d \times \R^d}\cdots
			\int_{(0,t_{n})\times \R^d\times \R^d} \\\notag
		&\quad  p_{t-t_1}(y_1-x)  p_{t-t_1}(z_1-x) \, 
			\Lambda(y_1-z_1) \, \d y_1\, \d z_1\, \d t_1  \\\notag
		&\quad  \prod_{j=1}^{n-1} \left[p_{t_{j}-t_{j+1}}(y_{j+1}-y_j)
			\,p_{t_{j}-t_{j+1}}(z_{j+1}-z_j) \, \Lambda(y_{j+1}-z_{j+1})\, 
			\d z_{j+1} \,\d y_{j+1}\,\d t_{j+1}\right] \\\notag
		& \quad p_{t_{n}-t_{n+1}}(y_{n+1}-y_n)\,p_{t_{n}-t_{n+1}}(z_{n+1}-z_n)
			\Lambda(y_{n+1}-z_{n+1})\, \d y_{n+1} \, 
			\d z_{n+1}  \, \d t_{n+1} \\\notag
		&\quad    \E\left[\cev V_t^{(\theta)}(t_{n+1},\,y_{n+1})
			\cev V_t^{(\theta)}(t_{n+1},\,z_{n+1})\right].
	\end{align}
	Thanks to the weak-noise condition \eqref{COND:WN} and Lemma \ref{lem:bdd:L2},
	\[
		\mathscr{C} = \adjustlimits
		\sup_{\substack{t>0 \\ 0<s\leq t}}
        \sup_{y,z\in\R^d}\left| \E\left[\cev V_t^{(\theta)}(s\,,y)
		\cev V_t^{(\theta)}(s\,,z)\right] \right|<\infty.
	\]
	Therefore,
	\begin{align}\notag
		& \int_{(0,t_n) \times \R^d\times \R^d}  
			p_{t_{n}-t_{n+1}}(y_{n+1}-y_n)\,p_{t_{n}-t_{n+1}}(z_{n+1}-z_n)
			\Lambda(y_{n+1}-z_{n+1}) \\\notag
            &\quad \times \E\left[\cev V_t^{(\theta)}(t_{n+1},\,y_{n+1})\cev V_t^{(\theta)}(t_{n+1},\,z_{n+1})\right] 
			 \d y_{n+1} \, 
			\d z_{n+1}  \, \d t_{n+1} \\\notag
		 &\quad \le \mathscr{C}\int_0^{t_n}
            \big(p_{2(t_{n}-t_{n+1})}*\Lambda\big)(y_n-z_n)\, \d t_{n+1}.
	\end{align}
	Next, we observe that
	\begin{align}\label{eq:recursive}
		& \int_{(0,t_{n-1}) \times \R^d\times \R^d}
			\int_0^{t_n}  p_{t_{n-1}-t_n}(y_{n}-y_{n-1})\,
			p_{t_{n-1}-t_n}(z_{n}-z_{n-1})\\ \notag
	&\qquad
		\Lambda(y_{n}-z_{n})\,  \left(
		p_{2(t_{n}-t_{n+1})}*\Lambda\right)(y_n-z_n)\, 
		\d t_{n+1}\, \d z_{n} \, \d y_{n}\, \d t_{n} \\\notag
	&= \int_0^{t_{n-1}} \left\{ p_{2(t_{n-1}-t_{n})}* \left[
		\Lambda(\cdot)\int_0^{t_n}
		\left(p_{2(t_n-t_{n+1})}*\Lambda\right)(\cdot)
		\, \d t_{n+1}\right]\right\}(y_{n-1}-z_{n-1}) \,\d t_n \\\notag
	&\le \int_0^{t_{n-1}}\left[\int_0^{t_n}\left(
		p_{2(t_{n+1}-t_n)}*\Lambda\right)(0)\, \d t_{n+1}\right]
		\cdot \left( p_{2(t_n-t_{n-1})}* \Lambda \right) (y_{n-1} -z_{n-1})
		\, \d t_n.
	\end{align}
	The last inequality is justified because, thanks to \eqref{mathscr(E)},
	the continuous, positive-definite function
	\[
		x\mapsto \left( p_{2(t_n-t_{n+1})} *\Lambda \right)(x) = 
		\int_{\R^d} \e^{-2(t_n-t_{n+1})\|\xi\|^2 - i\xi\cdot x}\,\mu(\d\xi) 
	\]
	is maximized at the origin [Bochner's theorem].
	Owing to \eqref{eq:rem:n+1}, we obtain the following bound by
	applying \eqref{eq:recursive} recursively to \eqref{eq:rem}:
	\begin{align*}
		&\E\left[R_n(t\,,x)^2\right]
			\le \mathscr{C} c_0^{2(n+1)} \int_{(0,t)\times \R^d\times\R^d} 
			\int_0^{t_1}\cdots \int_0^{t_n} \left[\prod_{i=1}^n 
			\left(p_{2(t_i-t_{i+1})}*\Lambda\right)(0) \,\d t_i\right]\\
		& \hskip1in p_{t_1}(y_1-x)  \Lambda(y_1-z_1) 
			p_{t_1}(z_1-x) \,  \d y_1\, \d z_1\, \d t_1 \\
		&\le \mathscr{C}c_0^{2(n+1)} \int_0^t \d t_1\int_0^{t_1}\d t_2\cdots
			\int_0^{t_n}\d t_{n+1} \prod_{i=0}^n 
			\left(p_{2(t_i-t_{i+1})}*\Lambda\right)(0) \\
		&\le \frac{\mathscr{C}c_0^{2(n+1)} }{(n+1)!} 
			\left(\int_0^{\infty} (p_{2s}*\Lambda)(0) \, \d s \right)^{n+1},
	\end{align*} 
	where $t_0:=t$. Thanks to \eqref{mathscr(E)},
	this yields
	\[
		\sup_{t>0} \|R_n(t)\|_{\U0}^2 \le \frac{1}{(n+1)!} 
		\left(\frac{\mathscr{C}c_0^{2} }{2}
		\int_{\R^d} \frac{\mu(\d\xi)}{\|\xi\|^2}\, \d \xi\right)^{n+1},
	\]
	which tends to $0$, as $n\to\infty$,
	under the weak-noise condition \eqref{COND:WN}.
\end{proof}

As was alluded to earlier, Lemma \ref{lem:E(R_n)} covers the bulk of 
the proof of Theorem \ref{th:PAM}. Let us now conclude that
discussion.

\begin{proof}[Proof of Theorem \ref{th:PAM}]
    Thanks to \eqref{X_theta} and Lemma \ref{lem:E(R_n)},
    we may define
    \begin{align*}
		Y_\theta &=\theta + \theta\lim_{t\to\infty}\sum_{n=1}^{\infty} 
            c_0^n\int_{(0,t)\times \R^d}
			\int_{(0,t_1) \times \R^d} \cdots \int_{(0,t_{n-1}) \times \R^d}\\
		&\qquad p_{t,t_1, t_2,\ldots, t_n}(x,\, y_1,\ldots, y_n)\,
			\cev{\eta}_t( \d t_n \, \d y_n) \, 
            \cev{\eta}_t(\d t_{n-1}\, \d y_{n-1}) \cdots 
            \cev{\eta}_t(\d t_1 \, \d y_1),
	\end{align*}
    where the sum and the limit converge in $L^2(\Omega).$
	Lemma \ref{lem:E(R_n)}, \eqref{V:X_theta}, and the definition of $\rho$
	-- see \eqref{rho} -- together imply that 
	\[
		\rho(X_\theta\,,Y_\theta)
		=\lim_{t\to\infty}\rho\left(V^{(\theta)}(t)\,,Y_\theta\right)
		\le\lim_{t\to\infty}
		\left\| \cev V^{(\theta)}_t(t) -Y_\theta\right\|_{\U0} =0.
	\]
	Because $\eta$ and $\cev{\eta}_t$ have the
    same law (see Section \ref{sec:duality}), this is another way to state the theorem; see Definition \ref{def:nu_theta}.
\end{proof}

\section{Spatial ergodicity}\label{sec:ergodicity}

Choose and fix an arbitrary number $\theta\in\R$ and consider the invariant measure
$\nu_\theta$ from Theorem \ref{th:main}, alternatively the invariant random field
$X_\theta$ whose law (viewed as an element of $M_1(\bS_d)$) is $\nu_\theta$.
Evidently, the weak-noise condition \eqref{COND:WN} implies that $\mu\{0\}=0$.
Therefore, the theory of Chen et al \cite{CKNP} implies that the spatial random
field $V^{(\theta)}(t) = \{V^{(\theta)}(t\,,x)\}_{x\in\R^d}$ 
is not only stationary but also ergodic for every fixed $t>0$;
see \eqref{V:X_theta}. [This is a nonlinear version of the ergodic condition of
Theorem 9 of  Maruyama \cite[p.~58]{Maruyama}, valid for Gaussian processes.]
Because of \eqref{V:X_theta} and the fact that the
metric $\rho$ from \eqref{rho} by default induces
a notion of weak convergence for probability measures on $(\bS_d\,,\sS_d)$, 
one might expect that the spatial ergodicity of $V^{(\theta)}(t)$ for large values of $t$
might sometimes transfer to its limit $X_\theta$. That turns out to be
sometimes the case, indeed.

\begin{theorem}\label{th:ergodic}
	Suppose in addition to \eqref{COND:WN} that there exists $k\in\N$ such that
	\begin{equation}\label{cond:ergodic}
		\frac{\lip_\sigma^2}{2}
		\int_{\R^d}\frac{\mu(\d\xi)}{\|\xi\|^2}<\frac{1}{2^{(d+2)/2}
		8k}.
	\end{equation} 
	Then, $\nu_\theta$ is spatially ergodic
	for every $\theta\in\R$; equivalently,
	the spatial random field $X_\theta$ is ergodic
	for every $\theta\in\R$.
\end{theorem}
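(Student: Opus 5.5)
We plan to prove spatial ergodicity of $X_\theta$ through a covariance criterion for functionals of the field, transported from finite $t$ to $t=\infty$ using the $\U0$-convergence in \eqref{X_theta}. We will use the standard criterion for stationary random fields: $X_\theta$ is ergodic as soon as
\[
	\lim_{r\to\infty}\frac{1}{|B(0,r)|}\int_{B(0,r)}
	\Cov\Big[\prod_{j=1}^m g_j(X_\theta(x_j))\,,\,\prod_{j=1}^m g_j(X_\theta(x_j+a))\Big]\d a=0
\]
for all $m\in\N$, all $x_1,\dots,x_m\in\R^d$, and all bounded Lipschitz $g_j$. Since such local functionals generate the invariant $\sigma$-field up to null sets, this criterion applies here; $L^2(\Omega)$-continuity in $x$ (Lemma \ref{lem:L2_cont}) lets us restrict attention to rational points. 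Each covariance is a Lipschitz functional of finitely many coordinates. By \eqref{X_theta} it equals the $t\to\infty$ limit of the same covariance for $\cev V^{(\theta)}_t(t)$, hence for $V^{(\theta)}(t)$ by \eqref{duals}. The whole matter therefore reduces to a decorrelation bound for $V^{(\theta)}(t)$ that is \emph{uniform in $t$}.

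For that uniform bound we will use the Poincar\'e-type inequality from Malliavin calculus, as in Chen et al.\ \cite{CKNP}:
\[
	|\Cov(F,G)|\le\int_0^t\d s\iint \|D_{s,y}F\|_2\|D_{s,z}G\|_2\,\Lambda(y-z)\,\d y\,\d z .
\]
This reduces the problem to a bound on $\|D_{s,y}V^{(\theta)}(t,x)\|_k$. The derivative satisfies the linearized equation
\[
	D_{s,y}V(t,x)=p_{t-s}(y-x)\sigma(V(s,y))+\int_s^t\!\!\int p_{t-r}(w-x)\Sigma_{r,w}D_{s,y}V(r,w)\,\eta(\d r\,\d w),
\]
with $|\Sigma|\le\lip_\sigma$. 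Iterating this equation with the BDG inequality (constant $4k$) and using Lemma \ref{lem:Lk:bdd} for $\sigma(V)$, we aim to prove
\[
	\|D_{s,y}V(t,x)\|_k\le C\,p_{2(t-s)}(y-x)\quad\text{uniformly in } t .
\]
The series in this iteration has ratio controlled by $8k\,\lip_\sigma^2\mathscr E$ times a constant coming from comparing $p_r^2$ with $p_{2r}$, namely $p_r(x)^2=(8\pi r)^{-d/2}p_{r/2}(x)$. The time-uniform summation should be exactly where the factor $2^{(d+2)/2}$ in \eqref{cond:ergodic} enters, and we expect to need $k\ge2$ for the products of derivatives.

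Inserting this derivative bound into the Poincar\'e inequality gives, uniformly in $t$,
\[
	|\Cov|\lesssim\sum_{i,j}\int_0^\infty\big(p_{c s}*\Lambda\big)(x_i-x_j-a)\,\d s ,
\]
which is the same kernel $\int_0^\infty(p_{2s}*\Lambda)$ that appears in Lemma \ref{lem:decorr:2}. Averaging in $a$ over $B(0,r)$ and applying the Fourier/Riemann–Lebesgue argument of Lemma \ref{lem:decorr:2} under \eqref{COND:WN} shows that the averages vanish as $r\to\infty$, uniformly in $t$. Letting $t\to\infty$ then completes the argument. The main obstacle will be the time-uniform Malliavin derivative estimate: the finite-$t$ results of \cite{CKNP} carry constants that grow in $t$, and closing the recursion globally in time with the heat-kernel-shaped bound is precisely what forces the stronger condition \eqref{cond:ergodic}.
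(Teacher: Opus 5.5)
Your proposal is correct and follows essentially the paper's route: a time-uniform bound $\|D_{s,y}V^{(\theta)}(t,x)\|_k\lesssim p_{2(t-s)}(x-y)$ under \eqref{cond:ergodic}, then the Poincar\'e inequality to decorrelate spatial averages uniformly in $t$, the Fourier/dominated-convergence argument under \eqref{COND:WN}, and the transfer to $X_\theta$ via \eqref{X_theta} and \eqref{duals}. The differences are only cosmetic. The paper gets the derivative bound by quoting Theorem 6.4 of \cite{CKNP} and letting $\lambda_0\downarrow0$ rather than re-running the iteration, and it uses the variance of spatial averages together with Lemma 7.2 of \cite{CKNP} in place of your equivalent Ces\`aro-averaged covariance criterion.
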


Thanks to Theorem \ref{th:main}, we can 
identify the support of  $\nu_\theta$, provided additionally that \eqref{cond:ergodic} holds.

\begin{corollary}\label{cor:ergodic}
	If \eqref{cond:ergodic} holds,
	then for every $\theta\in\R$, $\nu_\theta$ is supported on
	\[
		\left\{ h\in \bS_d:\,
		\lim_{\substack{r\to\infty:\\r\in\mathbb{Q}_+}}|B(0\,,r)|^{-1}\int_{B(0,r)}h(x)\,\d x=\theta\right\}.
	\]
\end{corollary}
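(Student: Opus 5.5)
The plan is to combine Theorem \ref{th:ergodic} with the mean ergodic theorem along a countable set of radii. Fix $\theta\in\R$. Theorem \ref{th:ergodic} says $X_\theta$ is spatially stationary (Theorem \ref{th:main}(b)) and ergodic. Moreover $X_\theta\in\U0$, so $\sup_x\E(|X_\theta(x)|^2)<\infty$, and $\E[X_\theta(x)]=\theta$. The multiparameter (Wiener/von Neumann) mean ergodic theorem for measure-preserving $\R^d$-actions, applied to averages over the Følner family of balls $B(0,r)$, then gives
\[
	|B(0\,,r)|^{-1}\int_{B(0,r)}X_\theta(x)\,\d x\to\E[X_\theta(0)]=\theta
	\quad\text{in }L^2(\Omega)\text{ as }r\to\infty.
\]
To set this up, I would realize $X_\theta$ canonically on $(\bS_d,\sS_d,\nu_\theta)$ with shifts $\tau_y h=h(\cdot+y)$. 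Joint measurability of $X_\theta$ makes the shift action jointly measurable, so the ergodic theorem applies. The same $L^2$ convergence along balls also follows directly from \eqref{AVO}, which was derived without any ergodicity assumption.

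Upgrading to almost sure convergence along all of $\Q_+$ is the main obstacle. The set in the corollary is defined by a limit over every rational $r$, not over a subsequence as in the sets $A_i$ used in the proof of Theorem \ref{th:main}(d). Borel--Cantelli alone only controls a sparse sequence. I would therefore invoke the pointwise multiparameter ergodic theorem for $\R^d$-actions along balls (Wiener's theorem; balls are a regular Følner family). For $f(h)=h(0)\in L^1(\nu_\theta)$, this gives $\nu_\theta$-a.s.\ convergence as $r\to\infty$ through all real $r$, and in particular through $r\in\Q_+$. Ergodicity identifies the limit $\E[f\mid\text{invariant}]$ with the constant $\theta$. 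Here $\int_{B(0,r)}h$ is understood through the Fubini version, which is defined for $\nu_\theta$-a.e.\ $h$ because $h\in L^2(m)$ for the uniform measures on balls included in $\tilde M_1(\R^d)$.

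If one wants to avoid the continuous-parameter pointwise theorem, there is a fallback: take $r_k=(1+\delta)^k$ and pass from the geometric sequence to all rationals by monotonicity of the integrals of $h_\pm$, at the cost of a factor $(1+\delta)^{d}$ that is then sent to $1$. The pointwise ergodic theorem gives a.s.\ convergence along $r_k$ directly.

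Finally, I would check measurability. The set in the corollary belongs to $\sS_d$, because each map $h\mapsto\int_{B(0,r)}h$ is continuous in $L^2(m)$ for the corresponding $m\in\tilde M_1(\R^d)$, and the limit is taken over a countable index set. This gives support in the measure-theoretic sense, namely full $\nu_\theta$-measure.
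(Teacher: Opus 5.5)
Your proposal is correct and follows the paper's route. The ingredients are the same: $L^2$ convergence of the ball averages (via \eqref{AVO}), spatial ergodicity from Theorem \ref{th:ergodic}, and the pointwise ergodic theorem to upgrade to almost-sure convergence. You also supply details the paper leaves implicit, namely convergence along all radii and membership of the set in $\sS_d$.

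Two small refinements are worth making:
\begin{enumerate}
\item Translation by an irrational vector is not well defined on the equivalence classes of $\bS_d$, because $\tilde M_1(\R^d)$ contains the point masses $\delta_q$, $q\in\Q^d$. So apply Wiener's theorem to the jointly measurable stationary field $X_\theta$ itself (e.g.\ through its law on $L^1_{\textit{loc}}(\R^d)$), and only then pass to $\nu_\theta$.
\item The pointwise theorem produces an almost-sure limit, namely the conditional mean of $X_\theta(0)$ given the shift-invariant $\sigma$-field. Since \eqref{AVO} already identifies the $L^2(\Omega)$-limit as $\theta$, this almost-sure limit must equal $\theta$ with no use of ergodicity. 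Hence stationarity together with \eqref{COND:WN} already suffices for the corollary.
\end{enumerate}
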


Recall from Theorem \ref{th:main} that $\{\nu_\theta\in\R\}$ are mutually singular.
This can be seen as an immediate consequence of Corollary \ref{cor:ergodic}
provided additionally that the weak-noise condition \eqref{COND:WN}
is strengthened to \eqref{cond:ergodic}. Let us first prove the corollary,
since much of the proof has been worked out earlier within the preceding results.
After that, we complete the proof of Theorem \ref{th:ergodic}.

\begin{proof}[Proof of Corollary \ref{cor:ergodic}]
	Thanks to \eqref{AVO}, the weak noise condition \eqref{COND:WN} alone
	implies that
	\[
		|B(0,r)|^{-1}\int_{B(0,r)}  X_\theta(y)\,\d y
		\xrightarrow{L^2(\Omega)}\theta\quad\text{as $r\to\infty$}.
	\]
	Theorem \ref{th:ergodic} ensures that $X_\theta$ is (spatially) ergodic
	under the more restrictive condition
	\eqref{cond:ergodic}.
	Therefore, the ergodic theorem ensures that
	the preceding limit holds almost surely as well. This proves
	the corollary.
\end{proof}

\begin{proof}[Proof of Theorem \ref{th:ergodic}]
	Choose and fix an arbitrary $\theta\in\R$ and $k\in\N$ that satisfies
	\eqref{cond:ergodic}.
	According to Chen et al \cite[Theorem 6.4]{CKNP}
	$V^{(\theta)}(t\,,x)\in \mathbb{D}^{1,k}$ for every $t>0$,
	$x\in\R^d$, and integers $k\ge1$, where $\mathbb{D}^{1,k}$ denotes
	the usual space of $k$-times Malliavin-differentiable random variables
	whose Malliavin derivative exists in the $L^2(\Omega)$ sense. Moreover,
	it is shown by Chen et al ({\it ibid}.) that
	the Malliavin derivative of $V^{(\theta)}(t\,,x)$ at $(s\,,y)$ satisfies
	\begin{equation}\label{DV}
		\| D_{s,y} V^{(\theta)}(t\,,x)\|_k \le
		\frac{2C_k\e^{\lambda_0(t-s)}}{\sqrt{
		1 - 2^{(d+2)/2}4k\lip_\sigma^2 
		\left( v_{\lambda_0}*\Lambda \right)(0)}} p_{2(t-s)}(x-y),
	\end{equation}
	where:\footnote{
		To be sure, this result is shown in Ref.~\cite{CKNP} for $\theta=1$
		and the Laplacian replaced by $\frac12\Delta$.
		The stated result is proved in exactly the same way, and with
		the same constants as in \cite[Theorem 6.4]{CKNP}, the way
		the latter result is written. But it is worth mentioning that the heat
		kernel comes from \eqref{p} and not the one for $\frac12\Delta$.%
	}
	\begin{enumerate}[\noindent(i)]
		\item $C_k=\sup_{n\ge0}\sup_{(t,x)\in\R_+\times\R^d}
			\E(|\sigma(V^{(\theta)}_n(t\,,x))|^k)$, where $V^{(\theta)}_n$
			denotes the $n$th-stage approximation to $V^{(\theta)}$
			using Picard iteration; see also (5.13) of Chen et al ({\it ibid}.)
			Fix an arbitrary number $T>0$.
			As a consequence of Dalang's theory, the moments of
			$V^{(\theta)}_n(t\,,x)$ are finite, uniformly in $n\in\N$, $t\in[0\,,T]$,
			and $x\in\R^d$. Thus,
			\[
				C_{k,T} = \adjustlimits
				\sup_{n\ge0}\sup_{(t,x)\in [0, T]\times\R^d}
				\E(|\sigma(V^{(\theta)}_n(t\,,x))|^k)<\infty
				\quad\forall k\in\N,\ T>0.
			\]
			The finiteness of $C_{k,T}$ does not in fact require
			the weak-noise condition \eqref{COND:WN}, only
			\eqref{cond:Dalang}. The same method shows that
			$C_k=\sup_{T>0}C_{k,T}<\infty$ when \eqref{COND:WN}
			holds; see Lemma \ref{lem:bdd:L2} for the case $k=2$,
			for example.
		\item $v_\lambda(x) = \int_0^\infty \exp(-\lambda t)p_t(x)\,\d t$
			for all $\lambda>0$ and $x\in\R^d$; and
		\item $\lambda_0>0$ is large enough to ensure that
			\[
				\lip_\sigma^2(v_{\lambda_0}*\Lambda)(0) < \frac{1}{2^{(d+2)/2}
				4k}.
			\]
	\end{enumerate}
	We have also used the fact that the constant $z_k$ of Chen et al 
	({\it ibid.}) is bounded from above by $2\sqrt{k}$; see \cite[(5.6)]{CKNP}.
	
	The inequality \eqref{DV} can be improved upon a little since
	\[
		\hat{v}_\lambda(\xi) = \int_0^\infty\e^{-\lambda t - t\|\xi\|^2}\,\d t
		=\frac{1}{\lambda+\|\xi\|^2}\qquad\forall \xi\in\R^d,
	\]
	whence
	\[
		(v_\lambda*\Lambda)(0) = \int_{\R^d}\frac{\mu(\d\xi)}{\lambda+\|\xi\|^2}
		\qquad\forall\lambda>0.
	\]
	Thanks to condition \eqref{cond:ergodic}, $\lim_{\lambda\downarrow0} (v_\lambda
	* \Lambda)(0)<[\lip_\sigma^2]^{-1}  [2^{(d+2)/2} 4k]^{-1} $. Therefore, we may let $\lambda_0\downarrow0$
	to find that the Malliavin derivative of $V^{(\theta)}(t\,,x)$ at $(s\,,y)$ satisfies
	\begin{equation}\label{supD}
		\sup_{s,t>0, x,y\in\R^d}\| D_{s,y} V^{(\theta)}(t\,,x)\|_k<\infty.
	\end{equation} 
	This improves Theorem 6.4 of Chen et al ({\it ibid}.) under the additional
	present condition \eqref{cond:ergodic}.
	We can now go through the proof of Theorems 1.6
	and 1.7 of Chen et al \cite{CKNP} and adapt it to the present situation
	as follows.
	
	Suppose $g_1,\ldots,g_k:\R\to\R$
	are bounded and Lipschitz continuous, non-random functions. For every $t>0$
	and $N\in\N$ let
	\[
		V_N(t) = \Var\left( N^{-d}\int_{[0,N]^d}\mathscr{G}(t\,,x)\,\d x\right),
	\]
	where
	\[
		\mathscr{G}(t\,,x) = \prod_{j=1}^k g_j\left( V^{(\theta)}(t\,,x + \zeta^j)\right)\,\d x
		\qquad\forall t>0,\ x,\zeta^1,\ldots,\zeta^k\in\R^d.
	\]
	Define $I_N= N^{-d}\1_{[0,N]^d}$ on $\R^d$ in order to deduce from the display
	that follows (8.3) in Chen et al ({\it ibid.}) the following:
	\[
		\sup_{t>0}V_N(t) \lesssim \sum_{j,\ell=1}^k
		\int_0^\infty\left( p_{2s}*I_N*\tilde{I}_N*\Lambda\right)\left(\zeta^j-\zeta^\ell\right)\d s,
	\]
	uniformly for all $N\in\N$. This improves the display after (8.3) of Chen et al ({\it ibid}.)
	by replacing their local-in-time inequality with a global-in-time inequality, valid thanks
	to \eqref{cond:ergodic} and hence \eqref{supD}. As is wont, we are also writing
	$\tilde{I}_N(x) = I_N(-x)$. The expression inside the integral in the above display
	describes a continuous, positive-definite function. Therefore, it is maximized at
	the origin [Bochner's theorem]. In other words,
	\[
		\sup_{t>0}V_N(t) \lesssim
		\int_0^\infty\left( p_{2s}*I_N*\tilde{I}_N*\Lambda\right)(0)\d s,
	\]
	uniformly for all $N\in\N$ and regardless of $\zeta^1,\ldots,\zeta^k\in\R^d$.
	Therefore, it follows from Plancherel's theorem (see \eqref{mathscr(E)})
	that
	\[
		\sup_{t>0}V_N(t) \lesssim
		\int_0^\infty\d s\int_{\R^d}\mu(\d\xi)\
		\e^{-2s\|\xi\|^2}|\hat{I}_N(\xi)|^2
		\propto \int_{\R^d}
		|\hat{I}_N(\xi)|^2\,\frac{\mu(\d\xi)}{\|\xi\|^2}.
	\]
	Since $\hat{I}_N$ is bounded and vanishes pointwise as $N\to\infty$,
	\eqref{COND:WN} and the dominated convergence theorem together
	yield 
	\begin{equation}\label{limV}
		\adjustlimits
		\lim_{N\to\infty}\sup_{t>0}V_N(t)=0.
	\end{equation}
	Recall the 3-parameter random field $\cev{V}^{(\theta)}
	=\{\cev{V}^{(\theta)}_t(s\,,x)\}_{t>0,s\in(0,t),x\in\R^d}$ from
	\eqref{mild:t}. Thanks to \eqref{duals},
	\[
		V_N(t) = \Var\left( N^{-d}\int_{[0,N]^d}\cev{\mathscr{G}}_t(t\,,x)\,\d x\right),
	\]
	where
	\[
		\cev{\mathscr{G}}_t(t\,,x) = \prod_{j=1}^k g_j\left( 
		\cev{V}^{(\theta)}_t(t\,,x + \zeta^j)\right)\,\d x
		\qquad\forall t>0,\ x,\zeta^1,\ldots,\zeta^k\in\R^d.
	\]
	Therefore, \eqref{limV} and \eqref{V:X_theta} together imply that
	\begin{equation}\label{Vars}
		\lim_{N\to\infty}\Var\left( N^{-d}\int_{[0,N]^d}
		\prod_{j=1}^k g_j\left( X_\theta(x + \zeta^j)\right)\,\d x\right)=0.
	\end{equation}
	Since $X_\theta$ is stationary, the ergodic theorem implies that
	\[
		Y=\lim_{N\to\infty} N^{-d}\int_{[0,N]^d}
		\prod_{j=1}^k g_j\left( X_\theta(x + \zeta^j)\right)\quad\text{exists a.s.~
		and in $L^2(\Omega)$}.
	\]
	Therefore, it follows from \eqref{Vars} that $Y=\E Y$,
	equivalently, $Y$ is non random. Because
	this holds for all mentioned choices of $\zeta^j$s and $g_i$s,
	the ergodicity of $X_\theta$ follows from Lemma 7.2 of
	Chen et al \cite{CKNP}.
\end{proof} 

\subsection*{Acknowledgements}
We thank Professor Samy Tindel whose question led to
Theorem \ref{th:Holder}.

\bibliographystyle{amsplain}
\bibliography{SPDE}

\end{document}